\documentclass{amsart}
\usepackage{amsmath}
\usepackage{amstext}
\usepackage{amssymb}
\usepackage{amsthm}
\usepackage[all]{xy} 
\textheight 8.9in \textwidth 6.2in \oddsidemargin -0.05in
\evensidemargin -0.25in \topmargin -0.05in

\swapnumbers
\theoremstyle{plain}
\newtheorem{thm}{Theorem}[section]
\newtheorem{lem}[thm]{Lemma}

\newtheorem{prop}[thm]{Proposition}
\newtheorem{cor}[thm]{Corollary}

\theoremstyle{definition}
\newtheorem{defi}[thm]{Definition}

\newtheorem{ex}[thm]{Example}
\newtheorem{exs}[thm]{Examples}

\newtheorem{ntn}[thm]{Notation}
\newtheorem{rmd}[thm]{Reminder}

\newtheorem{disc}[thm]{Discussion}
\theoremstyle{remark}
\newtheorem*{note}{Note}
\newtheorem{rmk}[thm]{Remark}
\newtheorem{rmks}[thm]{Remarks}
\newtheorem{strat}[thm]{Strategy}

\DeclareMathOperator{\Id}{Id} 
\DeclareMathOperator{\Ext}{Ext} \DeclareMathOperator{\Hom}{Hom}
 
 \DeclareMathOperator{\Ker}{Ker}
\DeclareMathOperator{\Ima}{Im}

\DeclareMathOperator{\diag}{diag}

\DeclareMathOperator{\lowlim}{lowlim}
\DeclareMathOperator{\Supp}{Supp} 
 
\DeclareMathOperator{\Ass}{Ass}

 \DeclareMathOperator{\Var}{Var}
 \DeclareMathOperator{\Spec}{Spec}

\DeclareMathOperator{\Coker}{Coker}
\def\Z{\mathbb Z}
\def\N{\mathbb N}

\def\fa{{\mathfrak{a}}}

\def\fb{{\mathfrak{b}}}

\def\fc{{\mathfrak{c}}}
\def\fd{{\mathfrak{d}}}
\def\fg{{\mathfrak{g}}}
\def\fh{{\mathfrak{h}}}

\def\fm{{\mathfrak{m}}}

\def\fn{{\mathfrak{n}}}
\def\fp{{\mathfrak{p}}}

\def\nn{\relax\ifmmode{\mathbb N_{0}}\else$\mathbb N_{0}$\fi}
\def\lra{\longrightarrow}

\begin{document}

\title{Lyubeznik numbers, $F$-modules and modules of generalized fractions}
\author{MORDECHAI KATZMAN}
\address{School of Mathematics and Statistics,
University of Sheffield, Hicks Building, Sheffield S3 7RH, United Kingdom}
\email{M.Katzman@sheffield.ac.uk}
\author{RODNEY Y. SHARP}
\address{School of Mathematics and Statistics,
University of Sheffield, Hicks Building, Sheffield S3 7RH, United Kingdom}
\email{R.Y.Sharp@sheffield.ac.uk}

\thanks{}

\subjclass[2010]{Primary 13A35, 13D45, 13E05, 13H05}

\date{\today}

\keywords{Commutative Noetherian ring, prime characteristic,
Frobenius homomorphism, regular ring, $F$-module,
filter-regular sequence, local cohomology module, Lyubeznik number, generalized fractions.}

\begin{abstract}

This paper presents an algorithm for calculation of the Lyubeznik numbers of a local ring which is a homomorphic image of a regular local ring $R$ of prime characteristic. The methods used employ Lyubeznik's $F$-modules over $R$, particularly his $F$-finite $F$-modules, and also the modules of generalized fractions of Sharp and Zakeri. It is shown that many modules of generalized fractions over $R$ have natural structures as $F$-modules; these lead to $F$-module structures on certain local cohomology modules over $R$, which are exploited, in conjunction with $F$-module structures on injective $R$-modules that result from work of Huneke and Sharp, to compute Lyubeznik numbers. The resulting algorithm has been implemented in Macaulay2.

\end{abstract}

\maketitle

\setcounter{section}{-1}
\section{\bf Introduction}
\label{in}
The aims of this paper are to study connections between the notions of $F$-module and module of generalized fractions over a regular ring $R$ of prime characteristic $p$, and to use these connections to produce an algorithm for calculation of the Lyubeznik numbers of certain local rings that are homomorphic images of regular local rings of characteristic $p$.

The concept of an \emph{$F$-module} was introduced by Gennady Lyubeznik in his seminal paper \cite{Lyube97}, in which
he showed that particularly simple instances of $F$-modules, namely \emph{$F$-finite $F$-modules},
satisfy strong finiteness conditions, including finiteness of the Bass numbers and set of associated primes.
Furthermore, the methods employed in \cite{Lyube97} are fairly constructive, yielding, for example, algorithms for determining the set of associated primes of an $F$-finite
$F$-module.
We will review the necessary concepts from the theory of $F$-finite $F$-modules in Section \ref{GLF} of this paper.

One motivation for Lyubeznik's work in \cite{Lyube97} was a desire to strengthen certain results of C. Huneke and the second author in \cite{58}; in that paper, Huneke and Sharp proved that, for an ideal $\fa$ of $R$, and for an integer $j\geq 0$, the local cohomology module $H^j_{\fa}(R)$ (which could well fail to be finitely generated) has a finite set of associated primes and finite Bass numbers. Local cohomology theory, due to A. Grothendieck (see \cite{Groth67}), is a powerful tool in algebraic geometry and commutative algebra and the study of local cohomology modules has yielded many insights.

Central to Huneke's and Sharp's argument in \cite{58} was the result that $E \cong F(E)$ for every injective $R$-module $E$, where $F$ is the so-called {\em Frobenius functor} $R' \otimes_R \bullet$, where $R'$ denotes $R$ considered as a left $R$-module in the natural way and as a right $R$-module via the Frobenius homomorphism $f: R \lra R$ (which raises all elements of $R$ to the $p$th power).

Lyubeznik introduced the concept of {\em $F$-module\/} in \cite[1.1]{Lyube97}: an $F$-module is an $R$-module ${\mathcal M}$ equipped with an $R$-module isomorphism $\theta : {\mathcal M} \stackrel{\cong}{\lra} F({\mathcal M})$, called the {\em structure (iso)morphism\/} of ${\mathcal M}$. Lyubeznik's detailed development of his theory of $F$-modules enabled him to prove, among many other things, that if $\fa_1, \ldots, \fa_n$ are ideals of $R$ and $j_1, \ldots, j_n$ are non-negative integers, then the $R$-module $$H^{j_n}_{\fa_n}(H^{j_{n-1}}_{\fa_{n-1}}(\ldots (H^{j_{1}}_{\fa_{1}}(R))\ldots)),$$ obtained by applying $n$ local cohomology functors successively to $R$, has finite set of associated primes and finite Bass numbers.

The second concept at the heart of this paper is that of \emph{module of generalized fractions} introduced by the second author and H. Zakeri in \cite{32} and \cite{34}
as a generalization of the classical concept of fraction formation in commutative algebra. The latter theory produces, for a module $N$ over a commutative ring $A$ and a multiplicatively closed subset $S$ of $A$, a module of fractions $S^{-1}N$.  The theory of generalized fractions produces, for a so-called triangular subset $U$ of $A^n = A \times \cdots \times A$ ($n$ factors), a module of generalized fractions $U^{-n}N$. Some preparatory results about generalized fractions are presented in Section \ref{gf}. Key results for us in this paper are that, over our regular ring $R$ of characteristic $p$, whenever ${\mathcal M}$ is an $F$-module over $R$ and $U$ is a triangular subset of $R^n$, then $U^{-n}{\mathcal M}$ is again an $F$-module in a
naturally-determined way; moreover, if ${\mathcal M}$ is actually an $F$-finite $F$-module (there is a reminder about this concept in Section \ref{GLF}), then so too is $U^{-n}{\mathcal M}$ when $U$ has a simple form determined by a sequence of $n$ elements of $R$. These key results are presented in Sections \ref{gfF} and \ref{mm} respectively. One very useful result proved in Section \ref{mm} is that the tensor product of two $F$-finite $F$-modules over $R$ is again an $F$-finite $F$-module.

In Section \ref{frs}, we use the concept of filter regular sequence in conjunction with a theorem of K. Khashyarmanesh, Sh.\ Salarian and H. Zakeri \cite[Theorem 1.2]{KhaSalZak98} in order to describe many local cohomology modules over $R$ as cohomology modules of complexes of modules of generalized fractions. The final Section \ref{Lnos} applies ideas from the first five sections to produce an algorithm for the calculation of certain so-called `Lyubeznik numbers'.  To explain what these are, we introduce additional notation.

Let $A$ be a $d$-dimensional local ring which can be expressed as a homomorphic image of an $n$-dimensional regular local ring $(S,\fn)$ that contains a subfield, by means of a surjective ring homomorphism $\pi : S \lra A$ having kernel $\fd$. Let $i,j,k \in \nn$. It is known that all the Bass numbers of $H^{k}_{\fd}(S)$ are finite: this was proved by Huneke and Sharp in \cite[Theorem 2.1]{58} in the case of prime characteristic, and by Lyubeznik in \cite[Theorem 3.4]{Lyube93} in the case where $S$ has characteristic $0$.  In \cite[Theorem-Definition 4.1]{Lyube93}, Lyubeznik showed that the Bass number $\mu^{i}(\fn,H^{n-j}_{\fd}(S))$ depends only on $A$, $i$ and $j$, but not on $S$, $n$ or $\pi$; he denoted $\mu^{i}(\fn,H^{n-j}_{\fd}(S))$ by $\lambda_{i,j}(A)$, and subsequently it has become known as the {\em $(i,j)$ Lyubeznik number of $A$}. Recall that this Bass number is equal to the number of copies of $E_S(S/\fn)$, the injective envelope of the simple $S$-module $S/\fn$, that occur in the decomposition, as a direct sum of indecomposable injective $S$-modules, of the $i$th term in the minimal injective resolution of $H^{n-j}_{\fd}(S)$.

In \cite[4.4]{Lyube93}, Lyubeznik noted that
$\lambda_{i,j}(A) = 0$ if $j > d$ or $i > j$, and
that $\lambda_{d,d}(A) \neq 0$.
We can present the Lyubeznik numbers of $A$ in the $(d+1) \times (d+1)$ matrix
\[
\left( \begin{array}{ccc} \lambda_{0,0}(A) & \cdots & \lambda_{0,d}(A)\\& \ddots & \vdots\\ && \lambda_{d,d}(A)\end{array}\right).
\]
The matrix is upper triangular because, as noted above, $\lambda_{i,j}(A) = 0$ if $i > j$. It is usual to omit the $0$s below the main diagonal. This matrix is referred to as the {\em Lyubeznik table of $A$} or the {\em type of $A$}.
We say that the Lyubeznik table is {\em trivial} if $\lambda_{i,j}(A) = 0$ except when $i = j = d$, and $\lambda_{d,d}(A) = 1$. This is the case when $A$ is Cohen--Macaulay and $R$ has prime characteristic: see \cite{NunWitZha16}.
The reader should note that the Lyubeznik table of the completion $\widehat{A}$ of $A$ is identical to the Lyubeznik table of $A$.

Lyubeznik numbers sometimes convey information about topological properties. For example, if $A$ as above is complete, equidimensional, has separably closed residue field and has $\dim A \geq 3$, then it is a result of L. N\'u\~nez-Betancourt, S. Spiroff and E. E. Witt in \cite[Theorem 6.1]{NunSpiWit19} that the connectedness dimension (see \cite[19.1.9]{LC}) of $\Spec(A)$ is at least $2$ if and only if $\lambda_{0,1}(A) = \lambda_{1,2}(A) = 0$.

The aim of Section \ref{Lnos} is the presentation of our algorithm for the calculation of the Lyubeznik numbers of certain homomorphic images of regular local rings of prime characteristic. We believe this is the first practical algorithm for the calculation of Lyubeznik numbers in prime characteristic. The algorithm has been implemented in Macaulay2, and any interested reader is referred to http://www.katzman.staff.shef.ac.uk/LyubeznikNumbers/

\section{\bf G. Lyubeznik's $F$-modules}
\label{GLF}

\begin{ntn}
\label{GLF.1} Throughout the paper, we shall assume that $\fa$ is a proper ideal of a regular (commutative Noetherian) ring $R$ of prime characteristic $p$, and we shall use $f :R \lra R$ to denote the Frobenius homomorphism, which raises each element of $R$ to its $p$th power. We use $\nn$ (respectively $\N$) to denote the set of non-negative (respectively positive) integers.

Sometimes we shall wish to work over a commutative ring more general than $R$; we adopt the convention that $A$ will denote a general commutative ring (with identity), and that $A$ will only be assumed to have additional properties, such as being Noetherian, when this is explicitly stated.

Returning to $R$, and following Lyubeznik \cite{Lyube97}, we shall use $F$ to denote the functor $R' \otimes_R \bullet$, where $R'$ is as described in the above Introduction. Thus $F$ is a functor from the category of all $R$-modules and $R$-homomorphisms to itself.  Because $R$ is regular, $F$ is exact. We refer to $F$ as the {\em Frobenius functor.\/}

Lyubeznik introduced the concept of {\em $F$-module\/} in \cite[1.1]{Lyube97}.  An $F$-module is an $R$-module ${\mathcal M}$ equipped with an $R$-module isomorphism $\theta : {\mathcal M} \stackrel{\cong}{\lra} F({\mathcal M})$, called the {\em structure (iso)morphism\/} of ${\mathcal M}$. Huneke--Sharp \cite[Proposition 1.5]{58} shows that every injective $R$-module is an $F$-module, while Lyubeznik \cite[Example 1.2(b)]{Lyube97} shows that every local cohomology module $H^i_{\fa}(R)~(i\in\nn)$ of $R$ is an $F$-module.

Let $M$ be an $R$-module and let $\beta : M \lra F(M)$ be an $R$-homomorphism. We can repeatedly apply $F$ to $\beta$ and obtain an $R$-homomorphism $F^i(\beta) : F^i(M) \lra F^{i+1}(M)$ for each $i \in \N$.  These $F^i(\beta)$ fit together into a commutative diagram
\[
\begin{picture}(300,75)(-150,-25)
\put(-120,40){\makebox(0,0){$                   M
$}}
\put(-60,40){\makebox(0,0){$                    F(M)
$}}
\put(-7,40){\makebox(0,0){$                      \cdots
$}}
\put(52,40){\makebox(0,0){$                     F^i(M)
$}}
\put(120,40){\makebox(0,0){$                    F^{i+1}(M)
$}}
\put(-110,40){\vector(1,0){36}}
\put(-90,44){\makebox(0,0){$^{             \beta
}$}}
\put(-47,40){\vector(1,0){29}}
\put(-34,44){\makebox(0,0){$^{             F(\beta)
}$}}
\put(0,40){\vector(1,0){36}}
\put(20,44){\makebox(0,0){$^{             F^{i-1}(\beta)
}$}}
\put(67,40){\vector(1,0){32}}
\put(83,44){\makebox(0,0){$^{             F^i(\beta)
}$}}
\put(-58,10){\makebox(0,0)[l]{$^{              F(\beta)
}$}}
\put(-120,-20){\makebox(0,0){$               F(M)
$}}
\put(-60,-20){\makebox(0,0){$                F^2(M)
$}}
\put(-7,-20){\makebox(0,0){$                  \cdots
$}}
\put(52,-20){\makebox(0,0){$                    F^{i+1}(M)
$}}
\put(123,-20){\makebox(0,0){$                   F^{i+2}(M)
$}}
\put(-107,-20){\vector(1,0){29}}
\put(-90,-16){\makebox(0,0){$^{             F(\beta)
}$}}
\put(-43,-20){\vector(1,0){24}}
\put(-31,-16){\makebox(0,0){$^{            F^2(\beta)
}$}}
\put(0,-20){\vector(1,0){32}}
\put(74,-20){\vector(1,0){26}}
\put(87,-16){\makebox(0,0){$^{            F^{i+1}(\beta)
}$}}
\put(-120,30){\vector(0,-1){40}}
\put(-116,10){\makebox(0,0){$^{             \beta
}$}}
\put(-60,30){\vector(0,-1){40}}
\put(57,30){\vector(0,-1){40}}
\put(59,10){\makebox(0,0)[l]{$^{              F^{i}(\beta)
}$}}
\put(120,30){\vector(0,-1){40}}
\put(122,10){\makebox(0,0)[l]{$^{              F^{i+1}(\beta)
}$}}
\put(142,40){\vector(1,0){28}}
\put(180,40){\makebox(0,0){$                      \cdots
$}}
\put(144,-20){\vector(1,0){28}}
\put(184,-20){\makebox(0,0){$                      \cdots.
$}}
\end{picture}
\]
The top row in this diagram gives rise to a direct system $(F^i(M))_{i \in \nn}$ (where $F^0$ denotes the identity functor); let ${\mathcal N}$ be the direct limit of this system. Because tensor product commutes with direct limits, there is a natural isomorphism between $F({\mathcal N})$ and the direct limit of the lower row; we use this isomorphism to identify that direct limit with $F({\mathcal N})$. The $(F^i(\beta))_{i \in \nn}$ induce an $R$-isomorphism $\psi : {\mathcal N} \stackrel{\cong}{\lra} F({\mathcal N})$, which therefore makes ${\mathcal N}$ into an $F$-module. We say that $\beta : M \lra F(M)$ is a {\em generating morphism for\/} ${\mathcal N}$. Furthermore, we say that an $F$-module ${\mathcal M}$ is {\em $F$-finite\/} if it has a generating morphism $\alpha : L \lra F(L)$ with $L$ a finitely generated $R$-module. If, in this situation, $\alpha$ is in addition injective, we say that $L$ is a {\em root of ${\mathcal M}$\/} and that $\alpha : L \lra F(L)$ is a {\em root morphism of ${\mathcal M}$\/}. We also refer to the image of $L$ in ${\mathcal M}$ as a {\em root of ${\mathcal M}$}. In yet another variation, we shall say that an $R$-homomorphism $\gamma : G \lra H$ is {\em isomorphic to a root of ${\mathcal M}$} if there exists a root $L$ of ${\mathcal M}$ with root morphism $\alpha : L \lra F(L)$ and isomorphisms $\phi : L \stackrel{\cong}{\lra} G$ and $\psi : F(L) \stackrel{\cong}{\lra} H$ such that the diagram
\[
\begin{picture}(300,65)(-150,-15)
\put(-25,40){\makebox(0,0){$             L $}}
\put(0,43){\makebox(0,0){$^{             \alpha }$}}
\put(20,40){\makebox(0,0)[l]{$             F(L) $}}
\put(-15,40){\vector(1,0){30}} \put(-28,15){\makebox(0,0)[r]{$^{ \phi
}$}} \put(-25,-10){\makebox(0,0){$            G $}}
\put(0,-7){\makebox(0,0){$^{            \gamma }$}}
\put(25,-10){\makebox(0,0)[l]{$             H $}}
\put(-15,-10){\vector(1,0){33}} \put(-25,30){\vector(0,-1){30}}
\put(30,30){\vector(0,-1){30}}
\put(-22,15){\makebox(0,0)[l]{$^{ \cong
}$}} \put(27,15){\makebox(0,0)[r]{$^{ \psi
}$}}  \put(33,15){\makebox(0,0)[l]{$^{ \cong
}$}}
\end{picture}
\]
commutes.

In \cite[Proposition 2.3(c)]{Lyube97}, Lyubeznik proved that an arbitrary $F$-finite $F$-module ${\mathcal M}$ has a root, $L$ say. We provide a short proof in Section \ref{mm} that, for each maximal ideal $\fm$ of $R$, the Bass numbers $\mu^0(\fm, L)$ and $\mu^0(\fm, {\mathcal M})$ are equal. This point is crucial for our algorithm.
\end{ntn}

\section{\bf Preparatory results about modules of generalized fractions}
\label{gf}

The concept of module of generalized fractions (due to the second author and H. Zakeri \cite{32}) will be used in this paper. The construction and basic properties of these modules can be found in \cite{32}, but, at the request of the referee, we include in this section explanation of some of the main ideas. Throughout this section, we shall work over $A$: see \ref{GLF.1}. Let $n \in \N$ and let $D_n(A)$ denote the set of all $n \times n$ lower triangular matrices with entries in $A$. The determinant of a square matrix $\mathbf{H}$ with entries in $A$ will be denoted by $|\mathbf{H}|$. Of course, the determinant of a lower triangular matrix is the product of its diagonal entries.

\begin{rmd}\label{tri} (Sharp--Zakeri \cite
[\S 2]{32}.)
\begin{enumerate}
\item Let $U$ be a triangular subset of $A^n$, that is, a non-empty subset of $A^n$ such that (a) whenever $(u_1, \ldots, u_n) \in U$ and $\alpha_1, \ldots, \alpha_n \in \N$ (the set of all positive integers), then $(u_1^{\alpha_1}, \ldots, u_n^{\alpha_n}) \in U$ also; and
 (b) whenever $(u_1, \ldots, u_n), (v_1, \ldots, v_n) \in U$, then there exists $(w_1, \ldots, w_n) \in U$ such that $w_i \in \left(\sum_{j=1}^i u_jA\right) \cap \left(\sum_{j=1}^i v_jA\right)$ for all $i=1,\ldots,n$, so that there exist $\mathbf{H}, \mathbf{K} \in D_n(A)$ such that $$\mathbf{H}[u_1, \ldots, u_n]^T = [w_1, \ldots, w_n]^T = \mathbf{K}[v_1, \ldots, v_n]^T.$$  (Here, $^T$ denotes matrix transpose, and $[z_1, \ldots, z_n]^T$ (for $z_1, \ldots, z_n \in A$) is to be interpreted as an $n \times 1$ column matrix in the obvious way.)
\item Let $M$ be an $A$-module. Define a relation $\sim$ on $M \times U$ as follows: for $m, g \in M$ and $(u_1, \ldots, u_n), (v_1, \ldots, v_n) \in U$, write $(m,(u_1, \ldots, u_n)) \sim (g,(v_1, \ldots, v_n))$ precisely when there exist $(w_1, \ldots, w_n) \in U$ and $\mathbf{H}, \mathbf{K} \in D_n(A)$ such that $\mathbf{H}[u_1, \ldots, u_n]^T = [w_1, \ldots, w_n]^T = \mathbf{K}[v_1, \ldots, v_n]^T$ and $|\mathbf{H}|m - |\mathbf{K}|g \in \sum_{j=1}^{n-1} w_jM$.
\item It turns out that $\sim$ is an equivalence relation on $M \times U$. A crucial ingredient for the proof of this given in \cite{32} is what might be called `the Two Routes Lemma' \cite[Lemma 2.3]{32}: suppose that $\mathbf{u}:=(u_1, \ldots, u_n), \mathbf{v}:=(v_1, \ldots, v_n) \in U$ and there exist $\mathbf{H_1},\mathbf{H_2} \in D_n(A)$ such that $\mathbf{H_1}\mathbf{u}^T = \mathbf{v}^T = \mathbf{H_2}\mathbf{u}^T$ (so that, in a sense, there are two `routes' from $\mathbf{u}^T$ to $\mathbf{v}^T$ via lower triangular matrices). Then $|\mathbf{D}\mathbf{H_1}|-|\mathbf{D}\mathbf{H_2}| \in \sum_{i=1}^{n-1} Av_i^2$, where $\mathbf{D}$ is the diagonal matrix $\diag(v_1, \ldots, v_n)$.
\item For $m \in M$ and $\mathbf{u} :=(u_1, \ldots, u_n) \in U$, we denote the equivalence class of $(m,(u_1, \ldots, u_n))$ under $\sim$ by the `generalized fraction'
    $$ \frac{m}{(u_1, \ldots, u_n)}$$ or $m/\mathbf{u}$. The set $U^{-n}M$ of all equivalence classes of $\sim$ is an $A$-module, called the {\em module of generalized fractions of $M$ with respect to $U$,\/} under operations for which, for $m, g \in M$ and $\mathbf{u} :=(u_1, \ldots, u_n), \mathbf{v} := (v_1, \ldots, v_n) \in U$,
\[ \frac{m}{(u_1, \ldots, u_n)} + \frac{g}{(v_1, \ldots, v_n)} = \frac{|\mathbf{H}|m + |\mathbf{K}|g}{(w_1, \ldots, w_n)}
\]
for {\em any\/} choice of $\mathbf{w}:=(w_1, \ldots, w_n) \in U$ and $\mathbf{H},\mathbf{K} \in D_n(A)$ such that $\mathbf{H}\mathbf{u}^T = \mathbf{w}^T = \mathbf{K}\mathbf{v}^T$, and $a(m/\mathbf{u}) = am/\mathbf{u}$ for $a \in A$.
\item The reader should note that the operations in $U^{-n}M$ are such that, if $\mathbf{u} := (u_1, \ldots, u_n) \in U$ and $m \in \sum_{i=1}^{n-1} u_i M$, then the generalized fraction $m/(u_1, \ldots, u_n)$ in $U^{-n}M$ is zero, because $\mathbf{I_n}\mathbf{u}^T = \mathbf{u}^T = \mathbf{I_n}\mathbf{u}^T$ and $|\mathbf{I_n}|m - |\mathbf{I_n}|0 \in \sum_{i=1}^{n-1} u_i M$.
\end{enumerate}
\end{rmd}

The referee has asked us to point out explicitly how a `classical' module of fractions is indeed a module of generalized fractions. We do this in the next lemma.

\begin{lem}\label{mcs} Let $S$ be a multiplicatively closed subset of $A$ and let $M$ be an $A$-module. Then $S$ is a triangular subset of $A^1$, and the module of generalized fractions $S^{-1}M$ is just the classical module of fractions of $M$ with respect to $S$.
\end{lem}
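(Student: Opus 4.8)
The plan is to handle the two assertions separately. First I would verify that $S$, viewed as the subset $\{(s) : s \in S\}$ of $A^{1}$, meets the requirements of Reminder \ref{tri}(i). Axiom (a) is immediate, since $S$ is non-empty (it contains $1$) and, being closed under multiplication, contains $s^{\alpha}$ whenever $s \in S$ and $\alpha \in \N$. For axiom (b), given $(s),(t) \in S$, the product $st$ lies in $S$ and in $sA \cap tA$, so $(w) := (st)$ is a legitimate common element, witnessed by the matrices $\mathbf{H} = (t)$ and $\mathbf{K} = (s)$ in $D_{1}(A)$, which one identifies with $A$.

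Before comparing the two modules I would record the simplifications that occur when $n = 1$. A member of $D_{1}(A)$ is a single element of $A$, whose determinant is itself; and the submodule $\sum_{j=1}^{n-1} w_{j}M$ appearing in the definition of $\sim$ in Reminder \ref{tri}(ii) is an empty sum, hence $0$. Consequently $m/(s)$ and $g/(t)$ represent the same element of the module of generalized fractions $S^{-1}M$ precisely when there exist $w \in S$ and $h,k \in A$ with $hs = w = kt$ and $hm = kg$; and the addition rule of Reminder \ref{tri}(iv) specialises, via the choice $w = st$, $\mathbf{H} = (t)$, $\mathbf{K} = (s)$, to $m/(s) + g/(t) = (tm+sg)/(st)$.

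With these observations in place, I would introduce the evident map $m/s \mapsto m/(s)$ from the classical module of fractions to $S^{-1}M$ and check it is a well-defined bijective $A$-homomorphism (hence an isomorphism). Well-definedness reduces to: if $u(tm - sg) = 0$ for some $u \in S$, then $w := ust \in S$, and with $h := ut$, $k := us$ one has $hs = w = kt$ and $hm = utm = usg = kg$. Injectivity is the same computation in reverse: from $w \in S$ and $h,k \in A$ with $hs = w = kt$ and $hm = kg$, one gets $w(tm - sg) = st(hm - kg) = 0$ with $w \in S$. Surjectivity is obvious, compatibility with scalar multiplication is immediate from $a(m/\mathbf{u}) = am/\mathbf{u}$, and compatibility with addition is exactly the specialised formula noted above.

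I do not anticipate a real obstacle: the whole argument is a direct unwinding of the definitions, made short by the $n = 1$ simplifications. The only point that needs a little care is the translation between the two equivalence relations, where one must clear denominators using an element of $S$ (so that the new denominator $w$ genuinely lies in the triangular subset), rather than an arbitrary element of $A$; and one must remember that for $n = 1$ the numerator condition in the definition of $\sim$ carries no correction term, being simply $hm = kg$.
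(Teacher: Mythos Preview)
Your proposal is correct and follows essentially the same approach as the paper: both verify the triangular axioms via $w=st$, and both reduce the comparison of the two fraction modules to the equivalence ``$hm=kg$ for some $h,k\in A$, $w\in S$ with $hs=w=kt$'' $\Longleftrightarrow$ ``$u(tm-sg)=0$ for some $u\in S$'', using exactly the same clearing-of-denominators computations. The only cosmetic difference is that the paper shows the two equivalence relations on $M\times S$ coincide (so the underlying sets are literally equal), whereas you package the same calculation as well-definedness and injectivity of the map $m/s\mapsto m/(s)$.
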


\begin{proof}[Proof.] It is clear that $S$ is a triangular subset of $A^1$, because $[t][s] = [st] = [s][t]$ for all $s,t \in S$.

Consider the relation $\sim$ on $M\times S$ of \ref{tri}(ii). Let $s,t \in S$ and $m,g \in M$, and suppose that $(m,s) \sim (g,t)$. This means that there exist $u\in S$ and $h,k \in A$ such that $hs = u = kt$ and $hm - kg =  0$; multiply by $st$ to see that $sthm - stkg =  0$, that is, $u(tm - sg) = 0$.  Conversely, if there exists $v \in S$ such that $v(tm - sg) = 0$, then $vts \in S$ and $(vt)s = vts = (vs)t$, with $(vt)m - (vs)g = 0$, so that $(m,s) \sim (g,t)$. Thus $\sim$ is just the equivalence relation used to form the classical module of fractions. Therefore, as sets, the module of generalized fractions $S^{-1}M$ is equal to the classical module of fractions, and it is straightforward to check that the arithmetic operations in the two structures are the same.
\end{proof}

\begin{ntn}
\label{gf.101} 


Let $n \in \N$ and let $\mathbf{f} = (f_1, \ldots, f_n) \in A^n$.  We are going to use the module of generalized fractions $A_{\mathbf{f}} = U_{\mathbf{f}}^{-n}A$ where
$U_{\mathbf{f}}$ denotes the triangular subset $\{(f_1^{\alpha_1}, \ldots, f_n^{\alpha_n}) : \alpha_1, \ldots, \alpha_n \in \nn\}$ of $A^n$.
A general element $\Phi$ of $A_{\mathbf{f}}$ has the form $a/(f_1^{\beta_1}, \ldots, f_n^{\beta_n})$ for some $a \in A$ and $\beta_1, \ldots, \beta_n \in \nn$. Use of the diagonal matrix $\diag(f_1^{\beta - \beta_1}, \ldots, f_n^{\beta - \beta_n})$ for a $\beta \in \N$ greater than all the $\beta_i$ enables us to see that
\[
\Phi = \frac{a}{(f_1^{\beta_1}, \ldots, f_n^{\beta_n})} = \frac{f_1^{\beta - \beta_1} \ldots f_n^{\beta - \beta_n}a}{(f_1^{\beta}, \ldots, f_n^{\beta})}.
\]
So, when considering a general element $\Phi$ of $A_{\mathbf{f}}$ as above, we may assume that $\beta_1 = \cdots = \beta_n$. Thus
$$
A_{\mathbf{f}} = \bigcup_{\beta \in \N} A \frac{1}{(f_1^{\beta}, \ldots, f_n^{\beta})}
$$
is the union of the cyclic submodules $A \big(1/(f_1^{\beta}, \ldots, f_n^{\beta})\big)~(\beta \in \N)$.  To work with these, we would like to have descriptions of their annihilators.
\end{ntn}

\begin{lem}
\label{gf.102} Let the notation be as in\/ {\rm \ref{gf.101}}, and let $\beta \in \N$. Then the annihilator of the generalized fraction $1/(f_1^{\beta}, \ldots, f_n^{\beta}) \in A_{\mathbf{f}}$ is
\[
\bigcup_{j\in\nn} (f_1^{j+\beta}A + \cdots + f_{n-1}^{j+\beta}A : f_1^{j} \ldots f_n^{j}).
\]
\end{lem}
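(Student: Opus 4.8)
The plan is to prove the two inclusions directly from the description of the equivalence relation $\sim$ recalled in \ref{tri}(ii), keeping in mind (see \ref{gf.101}) that every element of $U_{\mathbf{f}}$ has the form $(f_1^{\alpha_1},\ldots,f_n^{\alpha_n})$ with the $\alpha_i\in\nn$, and that after multiplication by a suitable diagonal matrix such an element may be replaced by one of the shape $(f_1^{\gamma},\ldots,f_n^{\gamma})$. An element $a\in A$ annihilates $1/(f_1^{\beta},\ldots,f_n^{\beta})$ precisely when $a/(f_1^{\beta},\ldots,f_n^{\beta})=0/(f_1^{\beta},\ldots,f_n^{\beta})$ in $A_{\mathbf{f}}$, so the task is to show that this holds if and only if $a$ lies in the displayed union.

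For the inclusion $\supseteq$, suppose $f_1^{j}\cdots f_n^{j}\,a\in f_1^{j+\beta}A+\cdots+f_{n-1}^{j+\beta}A$ for some $j\in\nn$. Then with $\mathbf{w}:=(f_1^{j+\beta},\ldots,f_n^{j+\beta})\in U_{\mathbf{f}}$ and $\mathbf{H}=\mathbf{K}:=\diag(f_1^{j},\ldots,f_n^{j})\in D_n(A)$ one has $\mathbf{H}[f_1^{\beta},\ldots,f_n^{\beta}]^{T}=\mathbf{w}^{T}=\mathbf{K}[f_1^{\beta},\ldots,f_n^{\beta}]^{T}$ and $|\mathbf{H}|a-|\mathbf{K}|0=f_1^{j}\cdots f_n^{j}\,a\in\sum_{i=1}^{n-1}w_iA$, so $a/(f_1^{\beta},\ldots,f_n^{\beta})=0$ by \ref{tri}(ii).

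For the reverse inclusion, assume $a/(f_1^{\beta},\ldots,f_n^{\beta})=0$. By \ref{tri}(ii) (taking the matrix ``$\mathbf{K}$'' there equal to $\mathbf{H}$) there are $\mathbf{w}\in U_{\mathbf{f}}$ and $\mathbf{H}\in D_n(A)$ with $\mathbf{H}[f_1^{\beta},\ldots,f_n^{\beta}]^{T}=\mathbf{w}^{T}$ and $|\mathbf{H}|a\in\sum_{i=1}^{n-1}w_iA$. Writing $\mathbf{w}=(f_1^{\gamma_1},\ldots,f_n^{\gamma_n})$, setting $\gamma:=\max\{\gamma_1,\ldots,\gamma_n,\beta\}$ and replacing $\mathbf{H}$ by $\diag(f_1^{\gamma-\gamma_1},\ldots,f_n^{\gamma-\gamma_n})\mathbf{H}\in D_n(A)$, I reduce (using the obvious divisibilities among products of powers of the $f_i$) to the case $\mathbf{w}=(f_1^{\gamma},\ldots,f_n^{\gamma})$ with $\gamma\ge\beta$, $\mathbf{H}[f_1^{\beta},\ldots,f_n^{\beta}]^{T}=[f_1^{\gamma},\ldots,f_n^{\gamma}]^{T}$, and $|\mathbf{H}|a\in\sum_{i=1}^{n-1}f_i^{\gamma}A$. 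I then compare $\mathbf{H}$ with the diagonal matrix $\diag(f_1^{\gamma-\beta},\ldots,f_n^{\gamma-\beta})$, which also carries $[f_1^{\beta},\ldots,f_n^{\beta}]^{T}$ to $[f_1^{\gamma},\ldots,f_n^{\gamma}]^{T}$, and apply the Two Routes Lemma \ref{tri}(iii) with $\mathbf{D}=\diag(f_1^{\gamma},\ldots,f_n^{\gamma})$; this gives $(f_1\cdots f_n)^{\gamma}|\mathbf{H}|-(f_1\cdots f_n)^{2\gamma-\beta}\in\sum_{i=1}^{n-1}f_i^{2\gamma}A$. Multiplying $|\mathbf{H}|a\in\sum_{i=1}^{n-1}f_i^{\gamma}A$ through by $(f_1\cdots f_n)^{\gamma}$ yields $(f_1\cdots f_n)^{\gamma}|\mathbf{H}|a\in\sum_{i=1}^{n-1}f_i^{2\gamma}A$, and combining these two facts gives $(f_1\cdots f_n)^{2\gamma-\beta}a\in\sum_{i=1}^{n-1}f_i^{2\gamma}A$. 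Putting $j:=2\gamma-\beta\in\nn$ and noting $2\gamma=j+\beta$, this says exactly $f_1^{j}\cdots f_n^{j}\,a\in f_1^{j+\beta}A+\cdots+f_{n-1}^{j+\beta}A$, so $a$ lies in the displayed union.

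I expect the main obstacle to be the inclusion $\subseteq$: the matrix $\mathbf{H}$ produced by the equivalence relation is an arbitrary lower triangular matrix, not a diagonal one, so $|\mathbf{H}|a\in\sum_{i<n}w_iA$ does not by itself have the colon-ideal shape of the asserted formula. The device that overcomes this is the Two Routes Lemma \ref{tri}(iii), which (after first normalising $\mathbf{w}$ to have equal exponents) lets one trade $\mathbf{H}$ for the diagonal ``denominator'' $\diag(f_1^{\gamma-\beta},\ldots,f_n^{\gamma-\beta})$ up to a controlled error lying in $\sum_{i<n}f_i^{2\gamma}A$; everything else is routine bookkeeping with products of powers of the $f_i$.
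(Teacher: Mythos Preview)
Your proof is correct and follows essentially the same approach as the paper's: both directions are handled identically in spirit, with the $\supseteq$ inclusion being a direct verification via a diagonal matrix, and the $\subseteq$ inclusion obtained by normalising the exponents in $\mathbf{w}$ and then invoking the Two Routes Lemma \ref{tri}(iii) to trade the arbitrary lower-triangular $\mathbf{H}$ for the explicit diagonal one. The only cosmetic difference is that the paper carries the normalising diagonal factor $\mathbf{D}_2$ alongside $\mathbf{H}$ rather than absorbing it into $\mathbf{H}$ first, but the resulting computation (your $\gamma$ is the paper's $\delta$, your $j=2\gamma-\beta$ is the paper's $2\delta-\beta$) is the same.
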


\begin{proof} Let $j \in \nn$ and $a \in (f_1^{j+\beta}A + \cdots + f_{n-1}^{j+\beta}A : f_1^{j} \ldots f_n^{j})$.  Use $\diag(f_1^{j}, \ldots, f_n^{j})$ to see that
\[
a \frac{1}{(f_1^{\beta}, \ldots, f_n^{\beta})} = \frac{f_1^{j} \ldots f_n^{j}a}{(f_1^{j+\beta}, \ldots, f_n^{j+\beta})},
\]
and this is zero by \ref{tri}(v).

Now let $a \in A$ be such that $a (1/(f_1^{\beta}, \ldots, f_n^{\beta})) = 0$. This means that there exist an $\mathbf{H} \in D_n(A)$ and $\alpha_1, \ldots, \alpha_n \in \nn$ such that $\mathbf{H} \big[ f_1^{\beta} \ldots f_n^{\beta}\big]^T = \big[ f_1^{\alpha_1} \ldots f_n^{\alpha_n}\big]^T$ and $|\mathbf{H}|a \in \sum_{j=1}^{n-1} f_j^{\alpha_j}A$. Let $\delta = \max\{\beta,\alpha_1, \ldots, \alpha_n\}$. Set
$$
\mathbf{D}_1 := \diag (f_1^{\delta - \beta}, \ldots, f_n^{\delta - \beta}), \quad \mathbf{D}_2 := \diag (f_1^{\delta - \alpha_1}, \ldots, f_n^{\delta - \alpha_n}).
$$
Then $\mathbf{D}_2\mathbf{H} \big[ f_1^{\beta} \ldots f_n^{\beta}\big]^T = \big[ f_1^{\delta} \ldots f_n^{\delta}\big]^T = \mathbf{D}_1 \big[ f_1^{\beta} \ldots f_n^{\beta}\big]^T$. Let $\mathbf{E} := \diag (f_1^{\delta}, \ldots, f_n^{\delta})$.  By the Two Routes Lemma (see \ref{tri}(iii)),
$$
|\mathbf{E}\mathbf{D}_2\mathbf{H}| - |\mathbf{E}\mathbf{D}_1| \in {\textstyle \sum_{j=1}^{n-1} f_j^{2\delta}A.}
$$
Since $|\mathbf{H}|a \in \sum_{j=1}^{n-1} f_j^{\alpha_j}A$, it follows that $|\mathbf{E}\mathbf{D}_2\mathbf{H}|a \in \sum_{j=1}^{n-1} f_j^{2\delta}A$.  Therefore $|\mathbf{E}\mathbf{D}_1|a \in \sum_{j=1}^{n-1} f_j^{2\delta}A$, that is,
$
a \in \big({\textstyle \sum_{j=1}^{n-1} f_j^{2\delta}A: f_1^{2\delta - \beta}\ldots f_n^{2\delta - \beta}}\big).
$
\end{proof}

\begin{defi}
\label{gf.103} Suppose, in the situation of \ref{gf.102}, that $A$ is Noetherian. The ideals in the sequence $$\big((f_1^{j+1}A + \cdots +f_{n-1}^{j+1}A: f_1^j \ldots f_n^j)\big)_{j=1}^\infty$$ form an ascending chain which will eventually become stationary; we call the eventual stationary value the {\em lower limit ideal of $(f_1, \ldots, f_n)$\/} and denote it by $(f_1, \ldots, f_n)^{\lowlim}$. Observe that $$f_{n+1}(f_1, \ldots, f_n)^{\lowlim} \subseteq (f_1, \ldots, f_n,f_{n+1})^{\lowlim}.$$

This contrasts with the {\em limit closure of $(f_1, \ldots, f_n)$}, denoted by $(f_1, \ldots, f_n)^{\lim}$, and defined by Huneke (in a special case) \cite[Definition 5.3]{Hunek98} as
\[
(f_1, \ldots, f_n)^{\lim}:= \bigcup_{j=1}^{\infty}\big((f_1^{j+1}A + \cdots +f_{n}^{j+1}A: f_1^j \ldots f_n^j)\big).
\]
It follows from \ref{gf.102} that the annihilator of the generalized fraction $1/(f_1, \ldots, f_n)$ in $A_{\mathbf f}$ is
$$
\left(f_1, \ldots, f_n\right)^{\lowlim} = (f_1^{j+1}A + \cdots +f_{n-1}^{j+1}A: f_1^j \ldots f_n^j)  \quad \mbox{for all $j \gg 0$}.
$$
The same result shows that $(f_1, \ldots, f_n)^{\lim}$ is the annihilator of the generalized fraction $1/(f_1, \ldots, f_n,1)$ in $(U_{\mathbf f} \times \{1\})^{-(n+1)}A$.

Let $t \in \N$. The annihilator of the generalized fraction $1/(f_1^{p^t}, \ldots, f_n^{p^t})$ in $A_{\mathbf f}$ is, by Lemma \ref{gf.102},
\[
\bigcup_{j\in\nn} (f_1^{j+p^t}\!A + \cdots + f_{n-1}^{j+p^t}\!A : f_1^{j} \ldots f_n^{j}).
\]
Since the ideals in the sequence $\big((f_1^{j+p^t}\!A + \cdots + f_{n-1}^{j+p^t}\!A : f_1^{j} \ldots f_n^{j})\big)_{j\in \nn}$ form an ascending chain, this annihilator is equal to
\[
\bigcup_{k\in\nn} \big(f_1^{kp^t+p^t}\!A + \cdots + f_{n-1}^{kp^t+p^t}\!A : f_1^{kp^t} \ldots f_n^{kp^t}\big) = (f_1^{p^t}, \ldots, f_n^{p^t})^{\lowlim}.
\]
\end{defi}

Recall that the $p$th Frobenius power of $\fa$, denoted $\fa^{[p]}$, is the ideal generated by all $p$th powers of elements of $\fa$.

\begin{lem}
\label{gf.104} Let $n \in \N$ and let ${\mathbf f} = (f_1, \ldots, f_n) \in R^n$ (see {\rm \ref{GLF.1}\/}). Then
\[
\left(f_1^p, \ldots, f_n^p\right)^{\lowlim}= \big(\left(f_1, \ldots, f_n\right)^{\lowlim}\big)^{[p]}.
\]
\end{lem}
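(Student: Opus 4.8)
The plan is to exploit the fact that, since $R$ is regular of prime characteristic, the Frobenius homomorphism $f$ is flat; equivalently, $f_1, \ldots, f_n$ is part of a system of parameters-type behaviour under colon ideals is controlled by flatness. Concretely, for a flat ring homomorphism, colons and finite sums of ideals are preserved under extension. Write $\mathbf{g} := \mathbf{f}^{[p]} = (f_1^p, \ldots, f_n^p)$. First I would unwind both sides using the explicit description from Definition~\ref{gf.103}: for $j \gg 0$,
\[
\left(f_1, \ldots, f_n\right)^{\lowlim} = \left(f_1^{j+1}R + \cdots + f_{n-1}^{j+1}R : f_1^j \cdots f_n^j\right),
\]
so that $\big(\left(f_1, \ldots, f_n\right)^{\lowlim}\big)^{[p]}$ is the Frobenius power of this colon ideal. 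On the other hand, again by Definition~\ref{gf.103} (the very last displayed computation there, applied with $t=1$, or directly from Lemma~\ref{gf.102}),
\[
\left(f_1^p, \ldots, f_n^p\right)^{\lowlim} = \left(f_1^{kp+p}R + \cdots + f_{n-1}^{kp+p}R : f_1^{kp} \cdots f_n^{kp}\right) \quad \text{for } k \gg 0.
\]

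The key step is then the identity, valid because $f \colon R \to R$ is faithfully flat (Kunz's theorem, since $R$ is regular of characteristic $p$):
\[
\left(f_1^{j+1}R + \cdots + f_{n-1}^{j+1}R : f_1^j \cdots f_n^j\right)^{[p]} = \left(f_1^{p(j+1)}R + \cdots + f_{n-1}^{p(j+1)}R : f_1^{pj} \cdots f_n^{pj}\right).
\]
This follows from the general fact that, for a flat homomorphism $\varphi \colon A \to B$, an ideal $I$ and finitely many elements $a_1, \ldots, a_m$, one has $\varphi((I : a_1) \cap \cdots \cap (I : a_m)) B = (\varphi(I)B : \varphi(a_1)) \cap \cdots$; applied here with $\varphi = f$, the extension of $I^{[1]} := f_1^{j+1}R + \cdots + f_{n-1}^{j+1}R$ along $f$ is exactly $f_1^{p(j+1)}R + \cdots + f_{n-1}^{p(j+1)}R$, and the extension of the colon is the colon of the extensions, with $f_1^j \cdots f_n^j$ mapping to $f_1^{pj} \cdots f_n^{pj}$. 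Taking $j$ large enough that the left-hand colon ideal has stabilised at $(f_1, \ldots, f_n)^{\lowlim}$, and noting that then $pj$ is also large enough for the right-hand chain (indexed by $k = j$) to have stabilised at $(f_1^p, \ldots, f_n^p)^{\lowlim}$, yields the claimed equality. One should also recall that for any ideal $I$, the Frobenius power $I^{[p]}$ equals the extension $f(I)R$ of $I$ along the Frobenius homomorphism, which is what makes the two formulations match up.

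The main obstacle I anticipate is the justification that the Frobenius functor (or equivalently extension along the flat map $f$) commutes with the formation of the relevant colon ideals; this is where regularity of $R$ is genuinely used, via flatness of Frobenius. One must be slightly careful that the colon is by a single element $f_1^j \cdots f_n^j$ (so the "finitely many elements" version of the flat-base-change lemma specialises to the cleanest case $(I : a)^e = (I^e : a^e)$ for flat extensions, with $a^e$ here meaning the image of $a$, i.e.\ $a^p$ under Frobenius), and that the sum being colon-ed into is finitely generated, so its extension is computed generator-by-generator. Once the commutation of $F$ with these colons is in hand, the rest is bookkeeping with the stabilisation indices: choosing $j$ in the range where $(f_1^{j+1}R + \cdots + f_{n-1}^{j+1}R : f_1^j \cdots f_n^j) = (f_1, \ldots, f_n)^{\lowlim}$ and simultaneously $(f_1^{pj+p}R + \cdots + f_{n-1}^{pj+p}R : f_1^{pj} \cdots f_n^{pj}) = (f_1^p, \ldots, f_n^p)^{\lowlim}$, which is possible since both ascending chains are eventually stationary and $j \mapsto pj$ is cofinal in $\nn$.
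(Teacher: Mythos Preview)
Your proposal is correct and follows essentially the same argument as the paper: choose $j$ large enough that both ascending chains have stabilised, then invoke flatness of the Frobenius (via Kunz, since $R$ is regular) to identify $(f_1^{j+1}R + \cdots + f_{n-1}^{j+1}R : f_1^j \cdots f_n^j)^{[p]}$ with $(f_1^{p(j+1)}R + \cdots + f_{n-1}^{p(j+1)}R : f_1^{pj} \cdots f_n^{pj})$. The paper's version is terser --- it simply says ``since $f$ is a flat ring homomorphism'' and writes down the chain of equalities --- but the substance is identical to what you outline.
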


\begin{proof} Choose $j$ sufficiently large so that $\left(f_1, \ldots, f_n\right)^{\lowlim} = (f_1^{j+1}R + \cdots +f_{n-1}^{j+1}R: f_1^j \ldots f_n^j)$ and $\left(f_1^p, \ldots, f_n^p\right)^{\lowlim} = (f_1^{p(j+1)}R + \cdots +f_{n-1}^{p(j+1)}R: f_1^{pj} \ldots f_n^{pj})$. Since $f$ is a flat ring homomorphism,
\begin{align*}
\left(f_1^p, \ldots, f_n^p\right)^{\lowlim} & = (f_1^{p(j+1)}R + \cdots +f_{n-1}^{p(j+1)}R: f_1^{pj}\ldots f_n^{pj})\\
&= (f_1^{j+1}R + \cdots +f_{n-1}^{j+1}R: f_1^j \ldots f_n^j)^{[p]}
= \big(\left(f_1, \ldots, f_n\right)^{\lowlim}\big)^{[p]}.    \qedhere
\end{align*}
\end{proof}

\begin{lem}
\label{gf.105} Let $n \in \N$ and let ${\mathbf f} = (f_1, \ldots, f_n) \in R^n$. For each $e \in \N$, let
\[
\fg_e := (f_1^{p^e}R + \cdots +f_{n}^{p^e}R: f_1^{p^e-1} \ldots f_n^{p^e-1}).
\]
If $\fg_e = \fg_{e+1}$, then $\fg_{e'} = \fg_{e}$ for all $e' \geq e$.
\end{lem}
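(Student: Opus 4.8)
The plan is to establish a recursion that expresses $\fg_{e+1}$ directly in terms of $\fg_e$, after which the conclusion drops out by the standard ``stabilisation is self-propagating'' argument familiar from the Hartshorne--Speiser--Lyubeznik theorem on the kernels of iterates of a $p$-linear map.

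First I would prove the recursion
\[
\fg_{e+1} = \bigl( \fg_e^{[p]} : g^{p-1} \bigr), \qquad \text{where } g := f_1 f_2 \cdots f_n ,
\]
valid for every $e \in \N$. The input is the exponent identity $p^{e+1} - 1 = p(p^e - 1) + (p-1)$, which gives $f_1^{p^{e+1}-1}\cdots f_n^{p^{e+1}-1} = \bigl( f_1^{p^e-1}\cdots f_n^{p^e-1}\bigr)^{p}\, g^{p-1}$, together with the flatness of the Frobenius homomorphism $f$ (here $R$ is regular, so $f$ is flat): for ideals $I,J$ of $R$ this flatness gives $(I : J)^{[p]} = \bigl(I^{[p]} : J^{[p]}\bigr)$, and applying it with $I = f_1^{p^e}R + \cdots + f_n^{p^e}R$ and $J = \bigl( f_1^{p^e-1}\cdots f_n^{p^e-1}\bigr)$ identifies $\fg_e^{[p]}$ with $\bigl( f_1^{p^{e+1}}R + \cdots + f_n^{p^{e+1}}R : ( f_1^{p^e-1}\cdots f_n^{p^e-1})^{p}\bigr)$. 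Substituting this into the defining expression for $\fg_{e+1}$, and using $(I : xy) = ((I:x):y)$, yields the displayed recursion.

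Granted the recursion, the lemma is immediate. Suppose $\fg_e = \fg_{e+1}$; then $\fg_e^{[p]} = \fg_{e+1}^{[p]}$, and applying the recursion at levels $e+1$ and $e$ in turn gives
\[
\fg_{e+2} = \bigl( \fg_{e+1}^{[p]} : g^{p-1}\bigr) = \bigl( \fg_e^{[p]} : g^{p-1}\bigr) = \fg_{e+1},
\]
and the same computation, run as an induction on $k$, shows $\fg_{e+k} = \fg_{e+k+1}$ for all $k \geq 0$, whence $\fg_{e'} = \fg_e$ for every $e' \geq e$. The only step needing any care is the derivation of the recursion --- the bookkeeping of exponents and the precise invocation of flatness of Frobenius to pass the colon ideal through the Frobenius power --- but that is entirely routine; once it is in place the recursion simply carries the single assumed equality forward for ever, so there is no genuine obstacle.
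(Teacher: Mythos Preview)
Your argument is correct. The recursion $\fg_{e+1}=(\fg_e^{[p]}:g^{p-1})$ follows exactly as you say from the identity $p^{e+1}-1=p(p^e-1)+(p-1)$ together with flatness of Frobenius (which gives $(I:a)^{[p]}=(I^{[p]}:a^p)$ for a principal $J=(a)$), and once that recursion is in hand the self-propagation of stabilisation is a one-line induction.

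The paper's proof encodes the same mechanism in the language of $F$-modules: it sets $\fc_e=\sum f_i^{p^e}R$, builds the map $\lambda:R/\fc_0\to F(R/\fc_0)$ induced by multiplication by $g^{p-1}$, identifies $\Ker(F^{e-1}(\lambda)\circ\cdots\circ\lambda)$ with $\fg_e/\fc_0$, and then quotes Lyubeznik's \cite[Proposition~2.3(b)]{Lyube97} for the stabilisation of such kernel chains. Your recursion is precisely what one obtains by unwinding Lyubeznik's argument in this concrete situation, so the two proofs have identical content; yours is more elementary and self-contained, while the paper's version has the virtue of tying the lemma explicitly into the $F$-module framework used throughout the rest of the paper.
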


\begin{note} Since the ideals in the sequence $$\big((f_1^{j}R + \cdots +f_{n}^{j}R: f_1^{j-1} \ldots f_n^{j-1})\big)_{j=1}^\infty$$ form an ascending chain, the above lemma provides us with an effective way to compute $\left( f_1, \ldots, f_n\right)^{\lim}$.
\end{note}

\begin{proof} Let $\fc_e = \sum_{i=1}^n f_i^{p^e}R$, for each $e \in \nn$; let $r$ denote a general element of $R$.  When $e \geq 1$, there is an isomorphism $\beta_e : F(R/\fc_{e-1}) \stackrel{\cong}{\lra} R/\fc_e$ which maps $r \otimes (1 + \fc_{e-1}))$ to $r + \fc_e$. Denote the isomorphism
$$
\beta_e \circ F(\beta_{e-1}) \circ \cdots \circ F^{e-1}(\beta_1) : F^e(R/\fc_0) \stackrel{\cong}{\lra}R/\fc_e
$$
by $\mu_e$; it maps $r \otimes (1 \otimes \cdots \otimes (1 + \fc_0) \cdots )$ to $r + \fc_e$.

Set $g = f_1 \ldots f_n$.
Let $\gamma : R/\fc_0 \lra R/\fc_1$ be the $R$-homomorphism induced by multiplication by $g^{p-1}$. Let $\lambda := \beta_1^{-1} \circ \gamma : R/\fc_0 \lra F(R/\fc_0)$. Note that $\lambda(r+\fc_0) = g^{p-1}r \otimes (1 + \fc_0)$. Set
$$
\delta_e := F^{e-1}(\lambda)\circ F^{e-2}(\lambda)\circ \cdots \circ F(\lambda)\circ \lambda : R/\fc_0 \lra F^e(R/\fc_0),
$$
which maps $r+\fc_0$ to
$$
g^{p^e-p^{e-1}}g^{p^{e-1}-p^{e-2}}\ldots g^{p-1}r \otimes (1 \otimes \cdots \otimes (1 + \fc_0) \cdots ) = g^{p^e-1}r \otimes (1 \otimes \cdots \otimes (1 + \fc_0) \cdots ).
$$
By Lyubeznik \cite[Proposition 2.3(b)]{Lyube97}, not only does the chain $\Ker \delta_1 \subseteq \Ker \delta_2 \subseteq \cdots \subseteq \Ker \delta_k \subseteq \cdots$ become stationary, but also, if $\Ker \delta_e = \Ker \delta_{e+1}$ for some $e \in \N$, then $\Ker \delta_{e'} = \Ker \delta_e$ for all $e' \geq e$. Now $\Ker \delta_e = \Ker (\mu_e \circ \delta_e)$ and $\mu_e \circ \delta_e : R/\fc_0 \lra R/\fc_e$ is  induced by multiplication by $g^{p^e-1}$. Therefore $\Ker \delta_e = \fg_e/\fc_0$.  The result follows.
\end{proof}

\begin{lem}
\label{gf.106} With the notation of\/ {\rm \ref{gf.105}} and with $n > 1$,
\[
\left(f_1, \ldots, f_n\right)^{\lowlim} = \big(\left(f_1, \ldots, f_{n-1}\right)^{\lim} : f_n^j\big)
\]
for any $j \in \nn$ such that $\big(\left(f_1, \ldots, f_{n-1}\right)^{\lim} : f_n^j\big) = \big(\left(f_1, \ldots, f_{n-1}\right)^{\lim} : f_n^{j+1}\big)$ (and there will be one such because $R$ is Noetherian).
\end{lem}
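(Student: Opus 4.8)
The plan is to realise $(f_1,\ldots,f_n)^{\lowlim}$ as an eventually constant term of the ascending chain of ideals $\big((f_1,\ldots,f_{n-1})^{\lim}:f_n^i\big)_{i\in\N}$, and then to invoke the elementary fact that once two consecutive terms of such a chain agree the chain has already stabilised. Following \ref{gf.103}, I would set $I_i:=f_1^{i+1}R+\cdots+f_{n-1}^{i+1}R$ and, for $i\in\N$,
\[
\fa_i:=(I_i:f_1^i\cdots f_n^i),\qquad \fb_i:=(I_i:f_1^i\cdots f_{n-1}^i).
\]
Three facts are then available essentially for free. By \ref{gf.103} the chain $(\fa_i)_i$ is ascending with eventual (stationary) value $(f_1,\ldots,f_n)^{\lowlim}$. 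By the note following \ref{gf.105}, applied with $n$ replaced by $n-1$, the chain $(\fb_i)_i$ is ascending; since $R$ is Noetherian it is eventually stationary, and its eventual value is $\bigcup_i\fb_i=(f_1,\ldots,f_{n-1})^{\lim}$. And a one-line colon manipulation gives $\fa_i=(\fb_i:f_n^i)$ for every $i$, since $af_1^i\cdots f_n^i\in I_i$ holds precisely when $(af_n^i)f_1^i\cdots f_{n-1}^i\in I_i$, i.e.\ when $af_n^i\in\fb_i$.

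Next I would pick $m\in\N$ large enough that $\fa_m=(f_1,\ldots,f_n)^{\lowlim}$ and $\fb_m=(f_1,\ldots,f_{n-1})^{\lim}$ simultaneously, and also $m\ge j$. For such $m$ the identity $\fa_m=(\fb_m:f_n^m)$ reads
\[
(f_1,\ldots,f_n)^{\lowlim}=\big((f_1,\ldots,f_{n-1})^{\lim}:f_n^m\big),
\]
so the ascending chain $\big((f_1,\ldots,f_{n-1})^{\lim}:f_n^i\big)_{i\in\N}$ is eventually stationary with value $(f_1,\ldots,f_n)^{\lowlim}$.

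It remains to pass from the stationary index $m$ to the index $j$ occurring in the statement. Writing $C:=(f_1,\ldots,f_{n-1})^{\lim}$, one has $(C:f_n^{i+1})=\big((C:f_n^i):f_n\big)$, so the hypothesis $(C:f_n^j)=(C:f_n^{j+1})$ propagates by a short induction to $(C:f_n^i)=(C:f_n^j)$ for all $i\ge j$; in particular, taking $i=m$, $(C:f_n^j)=(f_1,\ldots,f_n)^{\lowlim}$, which is the asserted equality. (The existence of some such $j$ is just the ascending chain condition applied to $(C:f_n^i)_i$.)

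I do not anticipate a genuine obstacle: the argument is almost entirely formal. The only points calling for care are the bookkeeping with stabilisation thresholds — $m$ must be chosen beyond the a priori distinct indices at which $(\fa_i)_i$ and $(\fb_i)_i$ become constant, as well as beyond $j$ — and the elementary colon-stabilisation step used in the last paragraph.
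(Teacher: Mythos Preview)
Your proposal is correct and follows essentially the same approach as the paper's proof: both arguments use the colon identity $(I_w:f_1^w\cdots f_n^w)=\big((I_w:f_1^w\cdots f_{n-1}^w):f_n^w\big)$, choose an index $w$ (your $m$) large enough that both the $\lowlim$ and $\lim$ chains have stabilised, and then use the propagation $(\fd:f_n^j)=(\fd:f_n^{j+1})\Rightarrow(\fd:f_n^{j'})=(\fd:f_n^j)$ for all $j'\ge j$ to descend from $w$ to the given $j$. Your explicit naming of $\fa_i$ and $\fb_i$ is a cosmetic difference only.
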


\begin{note} It follows from the note immediately after the statement of Lemma \ref{gf.105} that that lemma provides us with an effective way to compute $\left(f_1, \ldots, f_{n-1}\right)^{\lim}$, and so Lemma \ref{gf.106} provides us with an effective way to compute $\left(f_1, \ldots, f_n\right)^{\lowlim}$.
\end{note}

\begin{proof} Observe that, for any ideal $\fd$ of $R$ and $j \in \nn$, if $(\fd : f_n^j) = (\fd : f_n^{j+1})$, then $(\fd : f_n^{j+1}) = (\fd : f_n^{j+2})$. Let $h$ be the smallest non-negative integer such that $$\big(\left(f_1, \ldots, f_{n-1}\right)^{\lim} : f_n^h\big) = \big(\left(f_1, \ldots, f_{n-1}\right)^{\lim} : f_n^{h+1}\big).$$

Because the definitions of lower limit and limit closure involve unions of ascending chains of ideals, there exists an integer $t \in \N$ such that
$$\left(f_1, \ldots, f_n\right)^{\lowlim} = \big(f_1^{j+1}R + \cdots +f_{n-1}^{j+1}R: f_1^{j}\ldots f_n^{j}\big) \quad \mbox{for all~} j \geq t $$ and
$$ \left(f_1, \ldots, f_{n-1}\right)^{\lim} = \big(f_1^{j+1}R + \cdots +f_{n-1}^{j+1}R: f_1^{j}\ldots f_{n-1}^{j}\big) \quad \mbox{for all~} j \geq t. $$
Let $w \in \N$ be such that $w \geq \max\{h,t\}$.  Then we have
\begin{align*}
\left(f_1, \ldots, f_n\right)^{\lowlim} & = \big(f_1^{w+1}R + \cdots +f_{n-1}^{w+1}R: f_1^{w}\ldots f_n^{w}\big) \\ & =
\big(\left(f_1^{w+1}R + \cdots +f_{n-1}^{w+1}R: f_1^{w}\ldots f_{n-1}^{w}\right):f_n^w\big)
\\ &= \left(\left(\left(f_1, \ldots, f_{n-1}\right)^{\lim}\right):f_n^w\right)\\ &= \big(\big(\left(f_1, \ldots, f_{n-1}\right)^{\lim}\big):f_n^j\big)  \quad \mbox{for all~} j \geq h.
\end{align*}
The claim follows from this.
\end{proof}

We shall use the following technical lemma about generalized fractions in the next section.

\begin{lem}
\label{calc.6} Let $M$ be a module over the commutative ring $A$, and let $U$ be a triangular subset of $A^n$. There is an $A$-isomorphism $\mu_M : U^{-n}A \otimes_AM \stackrel{\cong}{\lra} U^{-n}M$ for which $\mu_M(a/\mathbf{u} \otimes x) = ax/\mathbf{u}$ for all $a \in A$, $x \in M$ and $\mathbf{u} \in U$.
\end{lem}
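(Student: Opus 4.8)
The plan is to construct the map $\mu_M$ by the universal property of tensor product, then exhibit an inverse directly, checking both composites are the identity. First I would verify that the assignment $(a/\mathbf{u}, x) \mapsto ax/\mathbf{u}$ gives a well-defined $A$-bilinear map $U^{-n}A \times M \to U^{-n}M$. The only subtlety is well-definedness in the first variable: if $a/\mathbf{u} = b/\mathbf{v}$ in $U^{-n}A$, then by \ref{tri}(ii) there are $\mathbf{w} \in U$ and $\mathbf{H},\mathbf{K} \in D_n(A)$ with $\mathbf{H}\mathbf{u}^T = \mathbf{w}^T = \mathbf{K}\mathbf{v}^T$ and $|\mathbf{H}|a - |\mathbf{K}|b \in \sum_{j=1}^{n-1}w_j A$; multiplying through by $x$ gives $|\mathbf{H}|(ax) - |\mathbf{K}|(bx) \in \sum_{j=1}^{n-1}w_j M$, which says exactly that $ax/\mathbf{u} = bx/\mathbf{v}$ in $U^{-n}M$ by the same relation \ref{tri}(ii). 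Additivity in $a$ and in $x$, and $A$-linearity, are then immediate from the formulas for the operations in \ref{tri}(iv). This bilinear map factors through a homomorphism $\mu_M : U^{-n}A \otimes_A M \to U^{-n}M$ with the stated formula.

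Next I would construct the inverse. Define $\nu_M : U^{-n}M \to U^{-n}A \otimes_A M$ on a generalized fraction by $m/\mathbf{u} \mapsto (1/\mathbf{u}) \otimes m$. To see this is well-defined, suppose $m/\mathbf{u} = g/\mathbf{v}$ in $U^{-n}M$, witnessed by $\mathbf{w} \in U$ and $\mathbf{H},\mathbf{K} \in D_n(A)$ with $\mathbf{H}\mathbf{u}^T = \mathbf{w}^T = \mathbf{K}\mathbf{v}^T$ and $|\mathbf{H}|m - |\mathbf{K}|g \in \sum_{j=1}^{n-1}w_j M$. In $U^{-n}A$ we have $1/\mathbf{u} = |\mathbf{H}|/\mathbf{w}$ and $1/\mathbf{v} = |\mathbf{K}|/\mathbf{w}$ (using $\mathbf{H}$, resp.\ $\mathbf{K}$, together with the identity matrix in the definition of $\sim$). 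Hence
\[
(1/\mathbf{u}) \otimes m - (1/\mathbf{v}) \otimes g = (|\mathbf{H}|/\mathbf{w}) \otimes m - (|\mathbf{K}|/\mathbf{w}) \otimes g = (1/\mathbf{w}) \otimes \big(|\mathbf{H}|m - |\mathbf{K}|g\big),
\]
and since $|\mathbf{H}|m - |\mathbf{K}|g = \sum_{j=1}^{n-1} w_j m_j$ for suitable $m_j \in M$, each term $(1/\mathbf{w}) \otimes w_j m_j = (w_j/\mathbf{w}) \otimes m_j$ vanishes because $w_j/\mathbf{w} = 0$ in $U^{-n}A$ by \ref{tri}(v). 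So the difference is $0$, and $\nu_M$ is well-defined on fractions; additivity and $A$-linearity follow from \ref{tri}(iv) in the same way. It is clear from the formulas that $\mu_M \circ \nu_M = \mathrm{id}_{U^{-n}M}$, since $\mu_M((1/\mathbf{u})\otimes m) = m/\mathbf{u}$. For the other composite, $\nu_M(\mu_M((a/\mathbf{u})\otimes x)) = \nu_M(ax/\mathbf{u}) = (1/\mathbf{u})\otimes ax = (a/\mathbf{u})\otimes x$; since elements of the form $(a/\mathbf{u})\otimes x$ span $U^{-n}A \otimes_A M$ (every element of $U^{-n}A$ is of the form $a/\mathbf{u}$, and tensors of such with elements of $M$ generate), $\nu_M \circ \mu_M = \mathrm{id}$ as well.

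The main point requiring care is the well-definedness of $\nu_M$, specifically the reduction to a common denominator $\mathbf{w}$ and the invocation of \ref{tri}(v) to kill the terms $w_j m_j$ with $j < n$; everything else is a routine bookkeeping exercise with the operations recorded in \ref{tri}. I would present the argument in the order above: define $\mu_M$ via bilinearity, define $\nu_M$ directly, check well-definedness of $\nu_M$, then verify the two composites are identities on a generating set.
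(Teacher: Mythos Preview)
Your proposal is correct and follows essentially the same approach as the paper: construct $\mu_M$ from an $A$-bilinear map, then build the inverse $\nu_M$ by $m/\mathbf{u} \mapsto (1/\mathbf{u})\otimes m$, checking well-definedness via a common denominator $\mathbf{w}$ and killing the leftover terms using \ref{tri}(v). You actually spell out the well-definedness of $\mu_M$ and the verification of both composites in more detail than the paper does, but the structure and key computation are the same.
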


\begin{proof}[Proof.] It is straightforward to show that there is a map $\lambda : U^{-n}A \times M \lra U^{-n}M$ for which $\lambda((a/\mathbf{u},x)) = ax/\mathbf{u}$ for all $a \in A$, $x \in M$ and $\mathbf{u} \in U$, although one must remember that a generalized fraction in $U^{-n}A$ can be represented in many different ways as  $a/(u_1, \ldots, u_n)$.

Since two generalized fractions in $U^{-n}A$ can be put on a common denominator, it is then clear that $\lambda$ is $A$-linear in both variables and gives rise to an $A$-homomorphism $\mu_M : U^{-n}A \otimes_AM \lra U^{-n}M$ satisfying the formula in the statement.

We construct an inverse for $\mu_M$. Suppose that a generalized fraction in $U^{-n}M$ is represented in two ways as  $x/ (u_1, \ldots, u_n)$ and $x'/ (u'_1, \ldots, u'_n)$ for $x,x' \in M$ and $\mathbf{u} = (u_1, \ldots, u_n), \mathbf{u'} = (u'_1, \ldots, u'_n) \in U$. Then there exist $\mathbf{u^{\prime\prime}} := (u_1^{\prime\prime}, \ldots, u_n^{\prime\prime}) \in U$ and $\mathbf{J}, \mathbf{J'} \in D_n(A)$ such that
\[
\mathbf{J}\mathbf{u}^T = \mathbf{u^{\prime\prime}}^T =\mathbf{J'}\mathbf{u'}^T \quad \mbox{and} \quad |\mathbf{J}|x - |\mathbf{J'}|x' = {\textstyle \sum_{i=1}^{n-1} u_i^{\prime\prime}x_i}
\]
for some $x_1, \ldots, x_{n-1} \in M$. Therefore, in $U^{-n}A\otimes_A M $, we have
\begin{align*}
\frac{1}{\mathbf{u}} \otimes x - \frac{1}{\mathbf{u'}} \otimes x' & = \frac{|\mathbf{J}|}{\mathbf{u^{\prime\prime}}} \otimes x - \frac{|\mathbf{J'}|}{\mathbf{u^{\prime\prime}}} \otimes x'=\frac{1}{\mathbf{u^{\prime\prime}}}\otimes \left(|\mathbf{J}|x - |\mathbf{J'}|x'\right) \\ & = \sum_{i=1}^{n-1} \frac{1}{(u_1^{\prime\prime}, \ldots, u_n^{\prime\prime})} \otimes u_i^{\prime\prime}x_i  = \sum_{i=1}^{n-1}  \frac{u_i^{\prime\prime}}{(u_1^{\prime\prime}, \ldots, u_n^{\prime\prime})}\otimes x_i = 0 \quad \mbox{by {\rm \ref{tri}(v)}}.
\end{align*}
It follows that there is a mapping $\nu_M : U^{-n}M \lra M \otimes_AU^{-n}A $ for which $\nu_M(x/\mathbf{u}) = x \otimes 1/\mathbf{u}$ for all $x \in M$ and $\mathbf{u} \in U$, and one checks easily that $\nu_M$ is an inverse for $\mu_M$.
\end{proof}

\section{\bf Modules of generalized fractions of $F$-modules are again $F$-modules.}
\label{gfF}

\begin{ntn}
\label{gfF.0} Throughout this section, we shall work over $R$: see \ref{GLF.1}. Also, $U$ will denote a triangular subset of $R^n$.
\end{ntn}

The main aim of this section is to show that, whenever $\mathcal{M}$ is an $F$-module over $R$, then the module of generalized fractions $U^{-n}\mathcal{M}$ is again an $F$-module. We shall first achieve this result for the special case in which ${\mathcal M} = R$, and then we shall use \ref{calc.6} to prove the general result.

\begin{prop}
\label{gf.106xy} The module of generalized fractions $U^{-n}R$ is an $F$-module with structural isomorphism $\theta : U^{-n}R \stackrel{\cong}{\lra} F(U^{-n}R) = R' \otimes_RU^{-n}R$ such that
\[
\theta\left( \frac{r}{(u_1,\ldots,u_n)}\right) = u_1^{p-1}\ldots u_n^{p-1}r\otimes\frac{1}{(u_1,\ldots,u_n)} \quad \mbox{for all $(u_1,\ldots,u_n) \in U$ and $r\in R$}.
\]
\end{prop}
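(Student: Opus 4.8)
The plan is to define the candidate structure map $\theta$ on a representative $r/(u_1,\dots,u_n)$ by the displayed formula, check it respects the equivalence relation defining $U^{-n}R$, note $R$-linearity, and then exhibit an explicit two-sided inverse rather than argue bijectivity abstractly. It will be convenient to abbreviate $\pi(\mathbf{u}):=u_1^{p-1}\cdots u_n^{p-1}$ and $\mathbf{u}^{[p]}:=(u_1^{p},\dots,u_n^{p})$, noting that $\mathbf{u}^{[p]}\in U$ whenever $\mathbf{u}\in U$ by the first closure property in \ref{tri}(i). Two elementary observations about $U^{-n}R$ will be used repeatedly: if $\mathbf{H}\in D_n(R)$ with $\mathbf{H}\mathbf{u}^{T}=\mathbf{w}^{T}$ and $\mathbf{u},\mathbf{w}\in U$, then $r/\mathbf{u}=|\mathbf{H}|r/\mathbf{w}$ for all $r$ (apply \ref{tri}(ii) with $\mathbf{I}_n$ on the $\mathbf{w}$-side); and, taking here $\mathbf{H}=\diag(u_1^{p-1},\dots,u_n^{p-1})$ and $\mathbf{w}=\mathbf{u}^{[p]}$, this gives $1/\mathbf{u}=\pi(\mathbf{u})\cdot(1/\mathbf{u}^{[p]})$. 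On the target side, recall that in $F(M)=R'\otimes_RM$ one has $a\otimes cm=ac^{p}\otimes m$ (so $p$-th powers may be slid across the tensor sign), and that $w_i/\mathbf{w}=0$ in $U^{-n}R$ for $i\le n-1$ by \ref{tri}(v).

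For well-definedness I would first isolate the following \emph{Key Lemma}: if $\mathbf{H}\in D_n(R)$ with $\mathbf{H}\mathbf{u}^{T}=\mathbf{w}^{T}$ and $\mathbf{u},\mathbf{w}\in U$, then $\pi(\mathbf{u})r\otimes 1/\mathbf{u}=\pi(\mathbf{w})|\mathbf{H}|r\otimes 1/\mathbf{w}$ in $R'\otimes_R U^{-n}R$. Using $1/\mathbf{u}=|\mathbf{H}|\cdot(1/\mathbf{w})$ and sliding $|\mathbf{H}|^{p}$ across the tensor reduces this to the purely ring-theoretic congruence
\[
\pi(\mathbf{u})\,|\mathbf{H}|^{p}-\pi(\mathbf{w})\,|\mathbf{H}|\ \in\ w_1^{p}R+\cdots+w_{n-1}^{p}R,
\]
since each ensuing summand $w_i^{p}(\cdots)\otimes 1/\mathbf{w}$ with $i\le n-1$ collapses to $(\cdots)\otimes(w_i/\mathbf{w})=0$. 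Granting this congruence, if $r/\mathbf{u}=s/\mathbf{v}$ one picks $\mathbf{w},\mathbf{H},\mathbf{K}$ as in \ref{tri}(ii); the Key Lemma rewrites $\theta(r/\mathbf{u})$ as $\pi(\mathbf{w})|\mathbf{H}|r\otimes 1/\mathbf{w}$ and $\theta(s/\mathbf{v})$ as $\pi(\mathbf{w})|\mathbf{K}|s\otimes 1/\mathbf{w}$, and the defining relation $|\mathbf{H}|r-|\mathbf{K}|s\in\sum_{j<n}w_jR$ kills the difference after one more use of "slide the $p$-th power across" together with $w_j/\mathbf{w}=0$ (noting $\pi(\mathbf{w})w_j=w_j^{p}\prod_{i\neq j}w_i^{p-1}$). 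The same common-denominator rewriting gives additivity of $\theta$, and $R$-linearity over $R$ is immediate from the formula.

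Proving the displayed congruence cleanly is the step I expect to be the main obstacle. My approach: set $a_i:=h_{ii}u_i$, so that $\pi(\mathbf{u})|\mathbf{H}|^{p}=\bigl(\prod_i a_i^{\,p-1}\bigr)|\mathbf{H}|$, and write $a_i=w_i-\varepsilon_i$ with $\varepsilon_i:=\sum_{j<i}h_{ij}u_j$ (so $\varepsilon_1=0$). Expand $a_i^{\,p-1}=(w_i-\varepsilon_i)^{p-1}=w_i^{p-1}+\varepsilon_i g_i$ and multiply out $\prod_i(w_i^{p-1}+\varepsilon_i g_i)\,|\mathbf{H}|$: the $S=\varnothing$ term is exactly $\pi(\mathbf{w})|\mathbf{H}|$, and the term indexed by $\varnothing\neq S\subseteq\{1,\dots,n\}$ vanishes if $1\in S$, while for $\ell:=\min S\ge 2$ the adjugate identity $|\mathbf{H}|u_j\in(w_1,\dots,w_j)$ — valid because $\adj(\mathbf{H})$ is again lower triangular and $\adj(\mathbf{H})\mathbf{w}^{T}=|\mathbf{H}|\mathbf{u}^{T}$ — gives $|\mathbf{H}|\varepsilon_\ell\in(w_1,\dots,w_{\ell-1})$, and since every $m<\ell$ lies outside $S$ the surviving factor $w_m^{p-1}$ absorbs the corresponding $w_m$ into a $w_m^{p}$ with $m\le n-1$. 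Hence every term of the expansion other than $\pi(\mathbf{w})|\mathbf{H}|$ lies in $\sum_{j<n}w_j^{p}R$. (The indispensable ingredients here are the Frobenius identity $(x-y)^{p}=x^{p}-y^{p}$ in characteristic $p$ and the lower-triangularity of adjugate matrices.)

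It remains to establish bijectivity, which I would do by constructing the inverse $\Theta\colon R'\otimes_R U^{-n}R\to U^{-n}R$ directly, from the balanced map $(a,\,b/\mathbf{v})\mapsto ab^{p}/\mathbf{v}^{[p]}$. Well-definedness of $\Theta$ is the easy direction: raising the relation $b/\mathbf{v}=b'/\mathbf{v}'$ to the $p$-th power replaces each matrix $\mathbf{H}$ occurring in \ref{tri}(ii) by $\mathbf{H}^{[p]}\in D_n(R)$ with $|\mathbf{H}^{[p]}|=|\mathbf{H}|^{p}$, and carries the membership $|\mathbf{H}|b-|\mathbf{K}|b'\in\sum_{j<n}w_jR$ to $|\mathbf{H}|^{p}b^{p}-|\mathbf{K}|^{p}b'^{p}\in\sum_{j<n}w_j^{p}R$ (the Frobenius power of a finitely generated ideal being generated by the $p$-th powers of its generators), so that $b^{p}/\mathbf{v}^{[p]}=b'^{p}/\mathbf{v}'^{[p]}$. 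Then $\Theta\circ\theta=\mathrm{id}$ follows from $1/\mathbf{u}=\pi(\mathbf{u})\cdot(1/\mathbf{u}^{[p]})$, and $\theta\circ\Theta=\mathrm{id}$ from the same identity together with $\pi(\mathbf{v}^{[p]})=\pi(\mathbf{v})^{p}$ and the $p$-th-power-sliding rule — both checks reducing to generators since the maps are additive. This shows $\theta$ is an $R$-isomorphism, so $U^{-n}R$ is an $F$-module with the asserted structure map.
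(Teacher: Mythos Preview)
Your proof is correct and differs from the paper's in an instructive way. The paper establishes well-definedness of $\theta$ by invoking the Two Routes Lemma (\ref{tri}(iii)): it observes that $\mathbf{H}^{p}\mathbf{D}_1^{p-1}\mathbf{u}^{T}=\mathbf{w}^{pT}=\mathbf{D}_3^{p-1}\mathbf{H}\mathbf{u}^{T}$ are two lower-triangular routes from $\mathbf{u}$ to $\mathbf{w}^{p}$, applies the lemma to obtain a congruence modulo $\sum_{j<n}w_j^{2p}R$ (carrying extra determinantal factors $|\mathbf{D}_3|^{p}$), and then computes in $R'\otimes_R U^{-n}R$ over the denominator $\mathbf{w}^{2}$. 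You instead prove the sharper congruence $\pi(\mathbf{u})\,|\mathbf{H}|^{p}\equiv\pi(\mathbf{w})\,|\mathbf{H}|\pmod{\sum_{j<n}w_j^{p}R}$ directly, via the expansion $(w_i-\varepsilon_i)^{p-1}=w_i^{p-1}+\varepsilon_i g_i$ together with the adjugate identity $|\mathbf{H}|u_j\in(w_1,\dots,w_j)$; this lets you work over the denominator $\mathbf{w}$ throughout. For the inverse, the paper simply cites \cite[Lemma 3.5]{KS} to produce $\phi(r'\otimes r/\mathbf{u})=r'r^{p}/\mathbf{u}^{[p]}$, whereas you construct this same map from scratch and verify well-definedness by applying Frobenius to the defining relation. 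The trade-off is that the paper's route is shorter once the Two Routes Lemma and \cite{KS} are available as black boxes, while yours is fully self-contained and yields a cleaner intermediate congruence.
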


\begin{proof} Suppose that $r,s \in R$ and $\mathbf{u}=(u_1,\ldots,u_n), \mathbf{v}=(v_1,\ldots,v_n) \in U$ are such that $r/\mathbf{u} = s/ \mathbf{v}$ in $U^{-n}R$.
Then there exist $\mathbf{w} = (w_1, \ldots, w_n) \in U$ and $\mathbf{H}, \mathbf{K} \in D_n(R)$ such that
\[
\mathbf{H}\mathbf{u}^T = \mathbf{w}^T = \mathbf{K}\mathbf{v}^T \quad \mbox{and} \quad |\mathbf{H}|r - |\mathbf{K}|s = w_1a_1 + \cdots + w_{n-1}a_{n-1} \quad \mbox{for some $a_1, \ldots, a_{n-1} \in R$.}
\]
Let $\mathbf{D}_1 = \diag(u_1, \ldots, u_n)$, $\mathbf{D}_2 = \diag(v_1, \ldots, v_n)$  and $\mathbf{D}_3 = \diag(w_1, \ldots, w_n)$. Our immediate aim is to show that
\[
u_1^{p-1}\ldots u_n^{p-1}r \otimes \frac{1}{(u_1, \ldots, u_n)} = v_1^{p-1}\ldots v_n^{p-1}s \otimes \frac{1}{(v_1, \ldots, v_n)} \quad \mbox{in $R'\otimes_RU^{-n}R$,}
\]
and we shall achieve this by showing that
\begin{equation}\label{A}
\Theta := \left(|\mathbf{H}|^p|\mathbf{D}_1|^{p-1}r  - |\mathbf{K}|^p|\mathbf{D}_2|^{p-1}s\right) \otimes \frac{1}{(w_1, \ldots, w_n)} \quad \mbox{in $R'\otimes_RU^{-n}R$}
\end{equation}
is zero. Set $\Phi := 1/(w_1^2, \ldots, w_n^2) \in U^{-n}R$. Note that
\begin{equation}\label{B}
\Theta = |\mathbf{D}_3|^{p}\left(|\mathbf{H}|^p|\mathbf{D}_1|^{p-1}r - |\mathbf{K}|^p|\mathbf{D}_2|^{p-1}s \right)\otimes \Phi.
\end{equation}
Note also that
\[
\mathbf{H}^p\mathbf{D}_1^{p-1}\mathbf{u}^T = \mathbf{w}^{pT} = \mathbf{D}_3^{p-1}\mathbf{H}\mathbf{u}^T \quad \mbox{and} \quad \mathbf{K}^p\mathbf{D}_2^{p-1}\mathbf{v}^T = \mathbf{w}^{pT} = \mathbf{D}_3^{p-1}\mathbf{K}\mathbf{v}^T.
\]
Therefore, by \ref{tri}(iii), we have
$ |\mathbf{D}_3|^p|\mathbf{H}|^p|\mathbf{D}_1|^{p-1} - |\mathbf{D}_3|^p|\mathbf{D}_3|^{p-1}|\mathbf{H}| = w_1^{2p}b_1 + \cdots + w_{n-1}^{2p}b_{n-1}$ and $|\mathbf{D}_3|^p|\mathbf{K}|^p|\mathbf{D}_2|^{p-1} - |\mathbf{D}_3|^p|\mathbf{D}_3|^{p-1}|\mathbf{K}|  = w_1^{2p}c_1 + \cdots + w_{n-1}^{2p}c_{n-1}$ for some $b_1, \ldots, b_{n-1},c_1, \ldots, c_{n-1} \in R$. Therefore
\[
\left(|\mathbf{D}_3|^p|\mathbf{H}|^p|\mathbf{D}_1|^{p-1}r - |\mathbf{D}_3|^p|\mathbf{D}_3|^{p-1}|\mathbf{H}|r \right) \otimes \Phi = \left(\sum_{j=1}^{n-1}w_j^{2p}b_jr\right)\otimes \Phi = \sum_{j=1}^{n-1}b_jr \otimes \frac{w_j^2}{(w_1^2, \ldots, w_n^2)}= 0
\]
by \ref{tri}(v). Similarly,
$
|\mathbf{D}_3|^{p}|\mathbf{K}|^p|\mathbf{D}_2|^{p-1}s\otimes \Phi = |\mathbf{D}_3|^p|\mathbf{D}_3|^{p-1}|\mathbf{K}|s\otimes \Phi.
$
It follows from Equation (2) that
\begin{align*}
\Theta &= |\mathbf{D}_3|^{p}\left(|\mathbf{D}_3|^{p-1}|\mathbf{H}|r - |\mathbf{D}_3|^{p-1}|\mathbf{K}|s \right)\otimes \Phi =
|\mathbf{D}_3|^{2p-1}\left(|\mathbf{H}|r - |\mathbf{K}|s\right)\otimes \Phi\\&=
w_1^{2p-1} \ldots w_n^{2p-1}\left( {\textstyle \sum_{j=1}^{n-1}w_ja_j}\right)\otimes \Phi =
{\textstyle \sum_{j=1}^{n-1}w_j^{2p}y_j\otimes\Phi }\quad \mbox{for some $y_1, \ldots, y_{n-1} \in R'$} \\ &=  \sum_{j=1}^{n-1}y_j \otimes \frac{w_j^2}{(w_1^2, \ldots, w_n^2)} = 0 \quad \mbox{by {\rm \ref{tri}(v)}}.
\end{align*}
There is therefore a mapping $\theta : U^{-n}R \lra F(U^{-n}R) = R' \otimes_RU^{-n}R$ such that
\[
\theta\left( \frac{r}{(u_1,\ldots,u_n)}\right) = u_1^{p-1}\ldots u_n^{p-1}r\otimes\frac{1}{(u_1,\ldots,u_n)} \quad \mbox{for all $(u_1,\ldots,u_n) \in U$ and $r\in R$}.
\]

By \cite[Lemma 3.5]{KS}, there is an $R$-homomorphism $\phi : R'\otimes_RU^{-n}R  \lra U^{-n}R$ for which
\[
\phi \left(r' \otimes \frac{r}{(u_1,\ldots,u_n)}\right) = r'\frac{r^p}{(u_1^p,\ldots,u_n^p)}
\]
for all $r' \in R$, $r \in R$ and $(u_1,\ldots,u_n) \in U$. One checks easily that $\theta$ and $\phi$ are inverse isomorphisms.
\end{proof}

We have not so far been able to find the following lemma in the existing literature.


\begin{lem}
\label{add.1} Let $X$ and $Y$ be $R$-modules.  There is an $R$-isomorphism
$$
\Delta : F(X) \otimes_R F(Y) \stackrel{\cong}{\lra} F(X \otimes_R Y)
$$
for which $\Delta((r' \otimes x) \otimes (s' \otimes y)) = r's' \otimes (x \otimes y)$ for all $r',s' \in R'$ and $x \in X$, $y \in Y$.
\end{lem}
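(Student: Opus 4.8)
The plan is to write down $\Delta$ and its inverse explicitly and to verify well-definedness by iterated appeals to the universal property of the tensor product; the only thing requiring care is the bookkeeping forced by the asymmetry of the bimodule $R'$, which is the natural copy of $R$ on the left but is twisted by the Frobenius $f$ on the right, so that the defining relation in $R'\otimes_R M$ reads $\xi r^p\otimes m = \xi\otimes rm$ for $\xi\in R'$, $r\in R$, $m\in M$.

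To construct $\Delta$, I would fix $b\in R'$ and $y\in Y$ and look at the map $R'\times X\to F(X\otimes_R Y)$ given by $(a,x)\mapsto ab\otimes(x\otimes y)$. This is $R$-balanced: on moving the right $R$-action of $R'$ across, $(ar^p,x)$ goes to $ar^pb\otimes(x\otimes y)$, while $(a,rx)$ goes to $ab\otimes(rx\otimes y) = ab\otimes r(x\otimes y) = abr^p\otimes(x\otimes y) = ar^pb\otimes(x\otimes y)$, the crucial equality $ab\otimes r(x\otimes y) = abr^p\otimes(x\otimes y)$ being precisely the defining relation of $F(X\otimes_R Y)$. Hence there is an induced $R$-homomorphism $\lambda_{b,y}:F(X)\to F(X\otimes_R Y)$ with $\lambda_{b,y}(a\otimes x) = ab\otimes(x\otimes y)$; a symmetric check in the variables $b,y$, followed by a routine verification that the resulting map $F(X)\times F(Y)\to F(X\otimes_R Y)$ is $R$-bilinear, produces the $R$-homomorphism
\[
\Delta : F(X)\otimes_R F(Y)\lra F(X\otimes_R Y),\qquad (a\otimes x)\otimes(b\otimes y)\longmapsto ab\otimes(x\otimes y).
\]

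For the inverse I would take the candidate $\nabla:F(X\otimes_R Y)\to F(X)\otimes_R F(Y)$ determined by $t\otimes(x\otimes y)\mapsto(t\otimes x)\otimes(1\otimes y)$. For fixed $t$ the assignment $(x,y)\mapsto(t\otimes x)\otimes(1\otimes y)$ is $R$-balanced, since $(t\otimes rx)\otimes(1\otimes y) = (tr^p\otimes x)\otimes(1\otimes y) = (t\otimes x)\otimes(r^p\otimes y) = (t\otimes x)\otimes(1\otimes ry)$, using the relations in $F(X)$ and $F(Y)$ together with the transfer of the left $R$-action across the outer tensor; one checks likewise that the dependence on $t$ is $R$-balanced, so $\nabla$ is well-defined. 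Then $\Delta\circ\nabla$ sends $t\otimes(x\otimes y)$ to $t1\otimes(x\otimes y) = t\otimes(x\otimes y)$, while $\nabla\circ\Delta$ sends $(a\otimes x)\otimes(b\otimes y)$ to $(ab\otimes x)\otimes(1\otimes y) = \bigl(b(a\otimes x)\bigr)\otimes(1\otimes y) = (a\otimes x)\otimes(b\otimes y)$ by pushing the scalar $b$ across the outer tensor. Hence $\Delta$ is an isomorphism with the asserted effect on simple tensors.

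The one genuinely delicate point — and hence the main obstacle, such as it is — is the well-definedness of $\Delta$ against the internal relations of $F(X)$ and $F(Y)$: this is exactly where the Frobenius twist in the right $R$-structure of $R'$ surfaces and has to be matched against the twist already built into $F(X\otimes_R Y)$, and it is the reason one cannot simply quote ``extension of scalars is symmetric monoidal'' without unwinding what that assertion means for the non-symmetric bimodule $R'$. Everything else is additivity and the standard manipulations of tensor products over the commutative ring $R$.
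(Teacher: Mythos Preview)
Your proof is correct. The paper takes a different, more structural route: rather than writing $\Delta$ down directly and exhibiting an explicit inverse, it factors $\Delta$ as a composition $\Delta_3\circ\Delta_2\circ\Delta_1$ of three standard $\Z$-isomorphisms --- two instances of associativity of the tensor product and, in the middle, the identification $W\otimes_R R'\cong W$ (with $W=F(X)$) via $w\otimes s'\mapsto s'w$ --- while carefully tracking the right $R$-module structure this last step induces on $W$ (namely $(r'\otimes x)\cdot b = r'\otimes bx$). Since each factor is a known isomorphism, bijectivity of $\Delta$ comes for free and no inverse need be produced. Your approach is more direct and self-contained, and has the small bonus of making the inverse $\nabla$ explicit; the paper's approach has the virtue of reducing everything to catalogued canonical isomorphisms, which slots neatly into the inductive definition of the higher $\Delta^j_{X,Y}$ and the naturality arguments that follow.
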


\begin{note} The reader may note that the argument in the proof below is valid over any commutative Noetherian ring of characteristic $p$; the hypothesis that the ring is regular is not needed here.
\end{note}

\begin{proof}[Proof.] Let $r',s'$ denote general elements of $R'$, let $a,b$ denote general elements of $R$, and let $x$ (respectively $y$) denote a general element of $X$ (respectively $Y$).

In the formation of $F(X) \otimes_R F(Y)$, the left $R$-module $F(Y) = R' \otimes_RY$ is such that $b(s' \otimes y) = bs' \otimes y$ and the right $R$-module $W := F(X)$ is such that $(r'\otimes x)a = ar' \otimes x$. Let $w$ denote a general element of $W$. By the associative law for tensor products, there is a $\Z$-isomorphism
$$
\Delta_1 : F(X) \otimes_R F(Y)= W \otimes_R (R'\otimes_RY) \stackrel{\cong}{\lra} (W \otimes_R R')\otimes_RY
$$
for which $\Delta_1(w \otimes(s' \otimes y)) = (w \otimes s') \otimes y$. Note that, in the formation of $(W \otimes_R R')\otimes_RY$, the right $R$-module structure on $W \otimes_R R'$ is such that $(w \otimes s')a = w \otimes s'a^p$.

Since the left $R$-module structure on $R'= R$ is the natural one, there is a $\Z$-isomorphism $\Gamma : W \otimes_R R' \stackrel{\cong}{\lra} W$ such that $\Gamma (w \otimes s') = s'w$. Let $\widetilde{W}$ denote the Abelian group $W$ endowed with the right $R$-module structure that makes $\Gamma$ into an isomorphism of right $R$-modules. This is such that $\widetilde{w}b = \Gamma ( \Gamma^{-1}(\widetilde{w})b)$ for all $\widetilde{w} \in \widetilde{W}$, that is, $$(r'\otimes x)b = \Gamma(((r'\otimes x)\otimes 1)b) = \Gamma((r'\otimes x)\otimes b^p) = b^p(r'\otimes x) = b^pr'\otimes x = r'\otimes bx.$$ Thus this right $R$-module structure on $\widetilde{W}$ is the structure induced on $R' \otimes_R X$ by regarding $X$ as both a right $R$-module and a left $R$-module in the natural way. The reader should note the difference between $\widetilde{W}$ and the $R$-module $W$ considered in the second paragraph of this proof.

The isomorphism of right $R$-modules $\Gamma$ induces a $\Z$-isomorphism
$$
\Delta_2 := \Gamma \otimes \Id_Y : (W \otimes_R R')\otimes_RY \stackrel{\cong}{\lra} \widetilde{W} \otimes_RY = (R' \otimes_RX)\otimes_RY
$$
which is such that $\Gamma\otimes \Id_Y(((r'\otimes x)\otimes s')\otimes y) = (s'r'\otimes x)\otimes y$.  It is important to note that, in this display, the right $R$-module structure on the right-most appearance of $R' \otimes_RX$ is the one that comes from regarding $X$ as an $(R,R)$-bimodule in the natural way.

Another use of the associative law for tensor products produces a $\Z$-isomorphism $$\Delta_3 : \widetilde{W} \otimes_RY = (R' \otimes_RX)\otimes_RY \stackrel{\cong}{\lra} R' \otimes_R(X\otimes_RY) = F(X\otimes_RY)$$
such that $\Delta_3((r'\otimes x)\otimes y) = r'\otimes (x\otimes y)$. The composition
$$
\Delta := \Delta_3 \circ \Delta_2 \circ \Delta_1 : F(X) \otimes_R F(Y) \stackrel{\cong}{\lra} F(X \otimes_R Y)
$$
satisfies $\Delta((r' \otimes x) \otimes (s' \otimes y)) = r's' \otimes (x \otimes y)$ and is an $R$-isomorphism.
\end{proof}

\begin{thm}
\label{mm.1g}
Let $\mathcal{M}$ be an $F$-module over the regular ring $R$ with structural isomorphism $\nu : \mathcal{M} \stackrel{\cong}{\lra} F(\mathcal{M}) = R'\otimes_R\mathcal{M}$. Then the module of generalized fractions $U^{-n}\mathcal{M}$ is again an $F$-module with structural isomorphism $\kappa : U^{-n}\mathcal{M} \stackrel{\cong}{\lra} F(U^{-n}\mathcal{M}) = R' \otimes_RU^{-n}\mathcal{M}$ such that
\[
\kappa\left( \frac{\nu^{-1}\left(\sum_{i=1}^h r_i' \otimes m_i\right)}{(u_1,\ldots,u_n)} \right) = \sum_{i=1}^h u_1^{p-1}\ldots u_n^{p-1}r_i' \otimes \frac{m_i}{(u_1,\ldots,u_n)}
\]
for all $(u_1,\ldots,u_n) \in U$, $h \in \N$, $r_1', \ldots, r'_h \in R$ and $m_1, \ldots, m_h \in \mathcal{M}$.
\end{thm}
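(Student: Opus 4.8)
The plan is to build the structural isomorphism $\kappa$ as a composite of isomorphisms that are already at our disposal, and then to read off the stated formula by tracing a general element through that composite. First I would invoke Lemma \ref{calc.6} to get the $R$-isomorphism $\mu_{\mathcal{M}} : U^{-n}R \otimes_R \mathcal{M} \stackrel{\cong}{\lra} U^{-n}\mathcal{M}$ with $\mu_{\mathcal{M}}(a/\mathbf{u} \otimes x) = ax/\mathbf{u}$; it is then enough to put an $F$-module structure on $U^{-n}R \otimes_R \mathcal{M}$ and transport it along $\mu_{\mathcal{M}}$. Proposition \ref{gf.106xy} supplies the structural isomorphism $\theta : U^{-n}R \stackrel{\cong}{\lra} F(U^{-n}R)$, while $\mathcal{M}$ comes equipped with $\nu : \mathcal{M} \stackrel{\cong}{\lra} F(\mathcal{M})$; tensoring these over $R$ gives an $R$-isomorphism $\theta \otimes_R \nu : U^{-n}R \otimes_R \mathcal{M} \stackrel{\cong}{\lra} F(U^{-n}R) \otimes_R F(\mathcal{M})$. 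Lemma \ref{add.1}, applied with $X = U^{-n}R$ and $Y = \mathcal{M}$, then provides $\Delta : F(U^{-n}R) \otimes_R F(\mathcal{M}) \stackrel{\cong}{\lra} F(U^{-n}R \otimes_R \mathcal{M})$, so that $\Delta \circ (\theta \otimes_R \nu)$ is a structural isomorphism for $U^{-n}R \otimes_R \mathcal{M}$.

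Next I would transport this structure to $U^{-n}\mathcal{M}$ by setting
\[
\kappa := F(\mu_{\mathcal{M}}) \circ \Delta \circ (\theta \otimes_R \nu) \circ \mu_{\mathcal{M}}^{-1} : U^{-n}\mathcal{M} \stackrel{\cong}{\lra} F(U^{-n}\mathcal{M}).
\]
Since each of the four maps in this composite is an $R$-isomorphism (the Frobenius functor $F$ preserves isomorphisms), so is $\kappa$, and $U^{-n}\mathcal{M}$ thereby acquires an $F$-module structure. It then remains only to verify the displayed formula. By the very construction of modules of generalized fractions, every element of $U^{-n}\mathcal{M}$ has the form $x/(u_1,\ldots,u_n)$ with $x \in \mathcal{M}$ and $(u_1,\ldots,u_n) \in U$; and, since $\nu$ is surjective, $x = \nu^{-1}\big(\sum_{i=1}^h r_i' \otimes m_i\big)$ for a suitable representative of $\nu(x)$ in $F(\mathcal{M}) = R' \otimes_R \mathcal{M}$. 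So I only need to evaluate $\kappa$ on such an element. Applying in turn $\mu_{\mathcal{M}}^{-1}$ (which sends $x/\mathbf{u}$ to $1/\mathbf{u} \otimes x$), then $\theta \otimes_R \nu$ (using $\theta(1/\mathbf{u}) = u_1^{p-1}\ldots u_n^{p-1} \otimes 1/\mathbf{u}$ and $\nu(x) = \sum_i r_i' \otimes m_i$), then $\Delta$ (using the formula of \ref{add.1}), and finally $F(\mu_{\mathcal{M}})$ (which sends $r' \otimes (1/\mathbf{u} \otimes m)$ to $r' \otimes m/\mathbf{u}$), one arrives at $\sum_{i=1}^h u_1^{p-1}\ldots u_n^{p-1} r_i' \otimes m_i/(u_1,\ldots,u_n)$, which is exactly the asserted right-hand side.

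An attractive feature of proceeding this way is that well-definedness comes for free: because $\kappa$ is assembled from maps already known to be well defined, there is no need to check separately that the stated formula is independent of the chosen representation of $\nu(x)$ as $\sum_i r_i' \otimes m_i$ or of the chosen expression $x/\mathbf{u}$ for a generalized fraction. I therefore expect no genuine conceptual obstacle; the only real care needed is bookkeeping --- keeping straight the identifications in Lemmas \ref{calc.6} and \ref{add.1}, and in particular the various left- and right-$R$-module structures that enter the construction of $\Delta$ --- so that the trace in the previous paragraph, while a little lengthy, goes through cleanly.
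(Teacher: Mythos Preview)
Your proposal is correct and follows essentially the same approach as the paper: both define $\kappa$ as the composite $F(\mu_{\mathcal{M}}) \circ \Delta \circ (\theta \otimes_R \nu) \circ \mu_{\mathcal{M}}^{-1}$, using Lemma~\ref{calc.6}, Proposition~\ref{gf.106xy}, and Lemma~\ref{add.1}. You additionally trace the formula through the composite explicitly, which the paper omits, but this is just added detail rather than a different argument.
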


\begin{proof} Define $\kappa$ to be the composition of the isomorphism $\psi : U^{-n}\mathcal{M} \stackrel{\cong}{\lra}U^{-n}R\otimes_R {\mathcal M}$ from \ref{calc.6}, the tensor product $\theta \otimes \nu : U^{-n}R\otimes_R {\mathcal M}\stackrel{\cong}{\lra}F(U^{-n}R)\otimes_R F({\mathcal M})$ where $\theta : U^{-n}R \stackrel{\cong}{\lra} F(U^{-n}R)$ is the isomorphism of \ref{gf.106xy}, the isomorphism $\Delta:F(U^{-n}R)\otimes_R F({\mathcal M})\stackrel{\cong}{\lra} F(U^{-n}R\otimes_R{\mathcal M})$ of \ref{add.1}, and the isomorphism $F(\psi^{-1}): F(U^{-n}R\otimes_R{\mathcal M})\stackrel{\cong}{\lra}
F(U^{-n}{\mathcal M})$, where $\psi$ is as immediately above.
\end{proof}

\begin{rmks}\label{1006} Let $\mathcal{M}$ be an $F$-module over the regular ring $R$. Let ${\mathbf f} = (f_1, \ldots,f_n) \in R^n$.
\begin{enumerate}
\item By \ref{mm.1g}, $\mathcal{M}_{{\mathbf f}} = U_{{\mathbf f}}^{-n}\mathcal{M}$ is again an $F$-module.
\item In particular, $R_{{\mathbf f}} = U_{{\mathbf f}}^{-n}R$ is an $F$-module.
\item In particular again, since a multiplicatively closed subset $S$ of $R$ is a triangular subset of $R^1$, the ordinary ring of fractions $S^{-1}R$ is an $F$-module.
\end{enumerate}
\end{rmks}

We shall find the following useful in Section \ref{mm}.

\begin{rmk}\label{alg.13} Let $S$ be a multiplicatively closed subset of our regular ring $R$, and let $F'$ denote the Frobenius functor over the regular ring $S^{-1}R$ (also of characteristic $p$). Let $M$ be an $R$-module.  Then it is straightforward to show that there is an $S^{-1}R$-isomorphism
\[
\tau_M : S^{-1}(F(M)) = S^{-1}(R'\otimes_RM) \stackrel{\cong}{\lra} (S^{-1}R)' \otimes_{S^{-1}R}S^{-1}M = F'(S^{-1}M)
\]
for which $\tau_M((r'\otimes m)/s) = (r'/s)\otimes (m/1)$ for all $r' \in R'$, $s \in S$ and $m\in M$. (Here, $(S^{-1}R)'$ denotes the ring
$S^{-1}R$ considered as a left module over itself in the natural way and as a right $S^{-1}R$-module via the Frobenius homomorphism.) As $M$ varies through the category of $R$-modules, the $\tau_M$ constitute a natural equivalence of functors.
\end{rmk}

\section{\bf Some modules of generalized fractions are $F$-finite $F$-modules}
\label{mm}

Throughout this section, we shall work over $R$: see \ref{GLF.1}.

In \ref{1006}(ii), we noted that the module of generalized fractions $R_{{\mathbf f}}$, where ${\mathbf f} = (f_1, \ldots,f_n) \in R^n$, is an $F$-module.  One of the aims of this section is to prove that this $F$-module $R_{{\mathbf f}}$ is $F$-finite. In fact, we shall prove that, if ${\mathcal M}$ is any $F$-finite $F$-module, then the module of generalized fractions ${\mathcal M}_{{\mathbf f}}$ is $F$-finite.

\begin{thm}
\label{gf.107} Let $n \in \N$ and let ${\mathbf f} = (f_1, \ldots, f_n) \in R^n$. Let $j \in \nn$. We consider the cyclic submodules of $R_{\mathbf{f}}$ generated by $1/\mathbf{f}^{p^{j+1}}$ and $1/\mathbf{f}^{p^{j}}$.
\begin{enumerate}
\item There is an isomorphism $\psi_{j+1} : R\big(1/\mathbf{f}^{p^{j+1}}\big) \stackrel{\cong}{\lra} F\big(R(1/\mathbf{f}^{p^{j}})\big)$ for which $$\psi_{j+1}(r/\mathbf{f}^{p^{j+1}}) = r \otimes 1/\mathbf{f}^{p^{j}} \quad \mbox{for all $r \in R$}.$$
\item There is an isomorphism $\gamma_{j} : R\big(1/\mathbf{f}^{p^{j}}\big) \stackrel{\cong}{\lra} F^j\big(R(1/\mathbf{f})\big)$ for which $$\gamma_{j}\left(\frac{r}{\mathbf{f}^{p^{j}}}\right) = r \otimes\Big( 1 \otimes \Big( \cdots \otimes \Big(1 \otimes \frac{1}{\mathbf{f}}\Big)\cdots\Big)\Big) \quad \mbox{for all $r \in R$}.$$
(Interpret $\gamma_0$ as the identity mapping on $R\big(1/\mathbf{f}\big)$. Then we can take
$\gamma_{j+1} := F(\gamma_j) \circ \psi_{j+1}$ for all $j \in \nn$.)
\item Let $\widetilde{\theta} : R\frac{1}{\mathbf{f}} \lra F \left(R\frac{1}{\mathbf{f}}\right)$ be the composition of the inclusion map $ R(1/\mathbf{f}) \stackrel{\subseteq}{\lra} R(1/\mathbf{f}^p)$ and the isomorphism $\psi_1 : R(1/\mathbf{f}^p) \stackrel{\cong}{\lra} F \left( R(1/\mathbf{f})\right)$ of part\/ {\rm (i)}.  Then $\widetilde{\theta} (r/\mathbf{f}) = f_1^{p-1}\ldots f_n^{p-1}r \otimes 1/\mathbf{f}$ for all $r \in R$, and $\widetilde{\theta}$ is monomorphic. There is a commutative diagram
\[
\begin{picture}(300,75)(-150,-25)
\put(-120,40){\makebox(0,0){$                   R\frac{1}{\mathbf{f}}
$}}
\put(-60,40){\makebox(0,0){$                    R\frac{1}{\mathbf{f}^p}
$}}
\put(-7,40){\makebox(0,0){$                      \cdots
$}}
\put(52,40){\makebox(0,0){$                     R\frac{1}{\mathbf{f}^{p^j}}
$}}
\put(120,40){\makebox(0,0){$                    R\frac{1}{\mathbf{f}^{p^{j+1}}}
$}}
\put(-110,40){\vector(1,0){40}}
\put(-90,44){\makebox(0,0){$^{             \subseteq
}$}}
\put(-50,40){\vector(1,0){32}}
\put(-34,44){\makebox(0,0){$^{             \subseteq
}$}}
\put(0,40){\vector(1,0){40}}
\put(137,40){\vector(1,0){40}}
\put(157,44){\makebox(0,0){$^{             \subseteq
}$}}
\put(147,-20){\vector(1,0){30}}
\put(20,44){\makebox(0,0){$^{             \subseteq
}$}}
\put(64,40){\vector(1,0){38}}
\put(83,44){\makebox(0,0){$^{             \subseteq
}$}}
\put(-62,10){\makebox(0,0)[r]{$^{             \cong
}$}}
\put(-58,10){\makebox(0,0)[l]{$^{              \psi_1 = \gamma_1
}$}}
\put(-120,-20){\makebox(0,0){$               R\frac{1}{\mathbf{f}}
$}}
\put(-60,-20){\makebox(0,0){$                F\left(R\frac{1}{\mathbf{f}}\right)
$}}
\put(-7,-20){\makebox(0,0){$                  \cdots
$}}
\put(52,-20){\makebox(0,0){$                    F^j\left(R\frac{1}{\mathbf{f}}\right)
$}}
\put(123,-20){\makebox(0,0){$                   F^{j+1}\left(R\frac{1}{\mathbf{f}}\right)
$}}
\put(-110,-20){\vector(1,0){32}}
\put(-90,-16){\makebox(0,0){$^{             \widetilde{\theta}
}$}}
\put(-43,-20){\vector(1,0){24}}
\put(-31,-16){\makebox(0,0){$^{            F(\widetilde{\theta})
}$}}
\put(0,-20){\vector(1,0){32}}
\put(16,-16){\makebox(0,0){$^{            F^{j-1}( \widetilde{\theta})
}$}}
\put(70,-20){\vector(1,0){26}}
\put(83,-16){\makebox(0,0){$^{            F^j( \widetilde{\theta})
}$}}
\put(-119,30){\line(0,-1){40}}
\put(-121,30){\line(0,-1){40}}
\put(-60,30){\vector(0,-1){40}}
\put(57,30){\vector(0,-1){40}}
\put(55,10){\makebox(0,0)[r]{$^{             \cong
}$}}
\put(59,10){\makebox(0,0)[l]{$^{              \gamma_j
}$}}
\put(120,30){\vector(0,-1){40}}
\put(118,10){\makebox(0,0)[r]{$^{             \cong
}$}}
\put(122,10){\makebox(0,0)[l]{$^{              \gamma_{j+1}
}$}}
\put(187,40){\makebox(0,0){$                      \cdots
$}}
\put(187,-20){\makebox(0,0){$                      \cdots.
$}}
\end{picture}
\]
\item The module of generalized fractions $R_{\mathbf{f}}$ is an $F$-finite $F$-module with $\widetilde{\theta}: R(1/\mathbf{f}) \lra F(R(1/\mathbf{f}))$ as a root morphism. Moreover, the inclusion map $R(1/\mathbf{f}) \stackrel{\subseteq}{\lra} R(1/\mathbf{f}^p)$ is isomorphic to a root of $R_{\mathbf{f}}$.
\end{enumerate}
\end{thm}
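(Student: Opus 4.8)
The plan is to establish parts (i)--(iv) in sequence, since each feeds into the next. For part (i), I would construct $\psi_{j+1}$ by using the annihilator descriptions: by Lemma~\ref{gf.102} (and the last computation in Definition~\ref{gf.103}), the annihilator of $1/\mathbf{f}^{p^{j+1}}$ in $R_{\mathbf f}$ is $(f_1^{p^{j+1}}, \ldots, f_n^{p^{j+1}})^{\lowlim}$, while the annihilator of $1/\mathbf{f}^{p^j}$ is $(f_1^{p^j}, \ldots, f_n^{p^j})^{\lowlim}$; by Lemma~\ref{gf.104} (applied $j$ times, or rather by the Frobenius-power relation $(g_1^p, \ldots)^{\lowlim} = ((g_1, \ldots)^{\lowlim})^{[p]}$ with $g_i = f_i^{p^j}$) the former equals the $p$th Frobenius power of the latter. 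Since $R(1/\mathbf{f}^{p^{j}}) \cong R/(f_1^{p^j}, \ldots)^{\lowlim}$ as $R$-modules (via $r/\mathbf{f}^{p^j} \mapsto r + \text{ann}$) and $F(R/\fb) \cong R/\fb^{[p]}$ naturally for any ideal $\fb$ (because $F$ is right exact and $F(R) = R$), we get $F(R(1/\mathbf{f}^{p^j})) \cong R/(f_1^{p^j},\ldots)^{[p],\lowlim} = R/(f_1^{p^{j+1}},\ldots)^{\lowlim} \cong R(1/\mathbf{f}^{p^{j+1}})$; chasing the generators through these identifications gives the stated formula $\psi_{j+1}(r/\mathbf{f}^{p^{j+1}}) = r \otimes 1/\mathbf{f}^{p^j}$.

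Part (ii) is then a formal induction: $\gamma_0 = \mathrm{id}$, and $\gamma_{j+1} := F(\gamma_j) \circ \psi_{j+1}$ is a composite of isomorphisms, with the explicit formula following by applying $F(\gamma_j)$ to $r \otimes 1/\mathbf{f}^{p^j}$ and using the inductive formula for $\gamma_j$. For part (iii), I would first identify $\widetilde\theta$: it is $\psi_1$ restricted to the submodule $R(1/\mathbf f) \subseteq R(1/\mathbf f^p)$, and since $1/\mathbf f = f_1^{p-1}\cdots f_n^{p-1}/\mathbf f^p$ (diagonal matrix $\diag(f_1^{p-1}, \ldots, f_n^{p-1})$), applying $\psi_1$ gives $\widetilde\theta(r/\mathbf f) = f_1^{p-1}\cdots f_n^{p-1}r \otimes 1/\mathbf f$; it is monomorphic as the restriction of an isomorphism. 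Commutativity of the diagram then reduces, square by square, to the identity $\gamma_{j+1} \circ (\text{inclusion } R(1/\mathbf f^{p^j}) \hookrightarrow R(1/\mathbf f^{p^{j+1}})) = F^j(\widetilde\theta) \circ \gamma_j$; expanding $\gamma_{j+1} = F(\gamma_j)\circ \psi_{j+1}$ and $F^j(\widetilde\theta) = F^j(\psi_1 \circ \text{incl})$ and applying the functor $F$ to the $j=0$ case (which is the definition of $\widetilde\theta$ composed with $\psi_1$) closes this by induction on $j$.

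For part (iv), the diagram in (iii) exhibits a ladder of isomorphisms from the direct system $\big(R(1/\mathbf f^{p^j})\big)_j$ with inclusion maps to the direct system $\big(F^j(R(1/\mathbf f))\big)_j$ built from iterating $\widetilde\theta$. The direct limit of the top row is $\bigcup_\beta R(1/\mathbf f^\beta) = R_{\mathbf f}$ by \ref{gf.101} (the cofinal subsystem indexed by $\beta = p^j$ has the same limit), and the direct limit of the bottom row is, by the construction recalled in \ref{GLF.1}, the $F$-module generated by $\widetilde\theta : R(1/\mathbf f) \to F(R(1/\mathbf f))$; the ladder of isomorphisms gives an $R$-isomorphism of these limits compatible with the $F$-module structures, so $R_{\mathbf f}$ is $F$-finite with generating morphism $\widetilde\theta$, and since $\widetilde\theta$ is injective (and $R(1/\mathbf f)$ is finitely generated, being cyclic), $\widetilde\theta$ is a root morphism and $R(1/\mathbf f)$ is a root. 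Finally, ``the inclusion map $R(1/\mathbf f) \hookrightarrow R(1/\mathbf f^p)$ is isomorphic to a root of $R_{\mathbf f}$'' in the sense of \ref{GLF.1} is witnessed by taking $\phi = \mathrm{id}$ on $R(1/\mathbf f)$ and $\psi = \psi_1 : R(1/\mathbf f^p) \xrightarrow{\cong} F(R(1/\mathbf f))$, since by definition $\widetilde\theta = \psi_1 \circ (\text{inclusion})$, so the required square commutes.

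\textbf{Main obstacle.} The delicate point is part (i): pinning down that the natural $R$-module isomorphism $R(1/\mathbf f^{p^{j+1}}) \cong F(R(1/\mathbf f^{p^j}))$ sends the generator $1/\mathbf f^{p^{j+1}}$ exactly to $1 \otimes 1/\mathbf f^{p^j}$ (rather than to a unit multiple), which is what makes the formulas in (ii)--(iv) clean. This requires care with the two identifications $R(1/\mathbf f^{p^j}) \cong R/(f_1^{p^j},\ldots)^{\lowlim}$ and $F(R/\fb) \cong R/\fb^{[p]}$ and checking they are compatible on generators; once that is done the rest is bookkeeping with direct limits.
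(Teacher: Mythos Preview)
Your proposal is correct and follows essentially the same route as the paper: part (i) via the annihilator identifications $R(1/\mathbf{f}^{p^j})\cong R/\fh_j$ with $\fh_j=(f_1^{p^j},\ldots,f_n^{p^j})^{\lowlim}$, the standard isomorphism $F(R/\fh_j)\cong R/\fh_j^{[p]}$, and Lemma~\ref{gf.104}; part (ii) by the stated induction; and part (iv) by reading off the direct limit of the ladder. The only organisational difference is in part (iii): the paper verifies commutativity of the general square by an explicit element chase (computing $\gamma_{j+1}(r/\mathbf{f}^{p^j})$ and $F^j(\widetilde\theta)\circ\gamma_j(r/\mathbf{f}^{p^j})$ directly and comparing), whereas you propose an inductive argument. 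Your induction does go through, but not quite by ``applying $F$ to the $j=0$ case'' alone: after using $\gamma_{j+1}=F(\gamma_j)\circ\psi_{j+1}$ and the inductive hypothesis, what remains is the auxiliary identity $\psi_{j+1}\circ\iota_j = F(\iota_{j-1})\circ\psi_j$ (where $\iota_k$ is the inclusion $R(1/\mathbf{f}^{p^k})\hookrightarrow R(1/\mathbf{f}^{p^{k+1}})$), which is a one-line check using $r/\mathbf{f}^{p^j}=(f_1\cdots f_n)^{p^{j+1}-p^j}r/\mathbf{f}^{p^{j+1}}$ and the Frobenius twist on $R'\otimes_R(\,\cdot\,)$. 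With that in hand your argument and the paper's are equivalent.
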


\begin{proof} (i) Denote $(f_1^{p^j}, \ldots, f_n^{p^j})^{\lowlim}$ by $\fh_j$. This ideal is, by \ref{gf.103}, the annihilator of the generalized fraction $1/\mathbf{f}^{p^j}$ in $R_{\mathbf{f}}$. Thus there is an isomorphism
$
\phi_j : R(1/\mathbf{f}^{p^{j}}) \stackrel{\cong}{\lra} R/\fh_j
$
for which $\phi_j(r/\mathbf{f}^{p^j}) = r + \fh_j$ for all $r \in R$. Apply $F$ to obtain the isomorphism $F(\phi_j) : F(R(1/\mathbf{f}^{p^j})) \stackrel{\cong}{\lra} F(R/\fh_j)$. But there is an isomorphism $\delta_j : F(R/\fh_j)\stackrel{\cong}{\lra} R/\fh_j^{[p]}$ for which  $\delta_j(r \otimes (1 + \fh_j)) = r + \fh_j^{[p]}$ for all $r\in R$. Now \ref{gf.104} shows that $\fh_j^{[p]} = \fh_{j+1}$, so that there is the isomorphism $\phi_{j+1} : R(1/\mathbf{f}^{p^{j+1}})\stackrel{\cong}{\lra} R/\fh_{j+1} = R/\fh_j^{[p]}$.  The composite isomorphism $\psi_{j+1} := (F(\phi_j))^{-1}\circ\delta_j^{-1} \circ \phi_{j+1} : R(1/\mathbf{f}^{p^{j+1}}) \stackrel{\cong}{\lra} F(R(1/\mathbf{f}^{p^{j}}))$
satisfies
\begin{align*}
\psi_{j+1}\big( r/\mathbf{f}^{p^{j+1}}\big)& = (F(\phi_j))^{-1}\circ\delta_j^{-1}(r + \fh_{j+1})= (F(\phi_j))^{-1}\circ\delta_j^{-1}(r + \fh_j^{[p]}) \\&= (F(\phi_j))^{-1}(r \otimes (1 + \fh_j)) = r \otimes (1/\mathbf{f}^{p^j}) \quad \mbox{for all $r \in R$.}
\end{align*}

(ii) We interpret $F^0$ as the identity functor (on the category of $R$-modules) and define $\gamma_0$ to be the identity mapping on $R(1/\mathbf{f})$. Define $\gamma_1:= F(\gamma_0)\circ \psi_1 = \psi_1 : R(1/\mathbf{f}^p) \stackrel{\cong}{\lra} F(R(1/\mathbf{f}))$, and use this as the basis for a straightforward induction employing the formula $\gamma_{j+1} := F(\gamma_j) \circ \psi_{j+1}$ for all $j \in \N$.

(iii) It is clear that the left-most square in the diagram commutes and that $\widetilde{\theta}$ is a monomorphism. Also, for $r \in R$,
\[
\widetilde{\theta}(r/\mathbf{f}) = \psi_1(f_1^{p-1}\ldots f_n^{p-1}r/\mathbf{f}^p) = f_1^{p-1}\ldots f_n^{p-1}r \otimes 1/\mathbf{f}.
\]

Consider a square in the diagram with $j \geq 1$. Let $r \in R$. Then
\begin{align*}
\gamma_{j+1}\left(\frac{r}{\mathbf{f}^{p^j}}\right)& = \gamma_{j+1}\left(\frac{f_1^{p^{j+1}- p^j}\ldots f_n^{p^{j+1}- p^j}r}{\mathbf{f}^{p^{j+1}}}\right)\\ &= f_1^{p^{j+1}- p^j}\ldots f_n^{p^{j+1}- p^j}r \otimes\Big( 1 \otimes \Big( \cdots \otimes \Big(1 \otimes \frac{1}{\mathbf{f}}\Big)\cdots\Big)\Big)\quad \mbox{($j+1$ tensor products)},
\end{align*}
whereas
\begin{align*}
F^j(\widetilde{\theta})\circ\gamma_{j}\left(\frac{r}{\mathbf{f}^{p^j}}\right)& = F^j(\widetilde{\theta})\Big(r \otimes\Big( 1 \otimes \Big( \cdots \otimes \Big(1 \otimes \frac{1}{\mathbf{f}}\Big)\cdots\Big)\Big)\Big) \quad \mbox{($j$ tensor products)}\\ &=
r \otimes\Big( 1 \otimes \Big( \cdots \otimes \Big(f_1^{p-1}\ldots f_n^{p-1} \otimes \frac{1}{\mathbf{f}}\Big)\cdots\Big)\Big)  \quad \mbox{($j+1$ tensor products)}\\ &= r \otimes\Big( 1 \otimes \Big( \cdots \otimes \Big(f_1^{p^2-p}\ldots f_n^{p^2-p} \otimes \Big(1 \otimes \frac{1}{\mathbf{f}}\Big)\Big)\cdots\Big)\Big)  \quad \mbox{($j+1$ tensor products)}
\\ &= \cdots = f_1^{p^{j+1}- p^j}\ldots f_n^{p^{j+1}- p^j}r \otimes\Big( 1 \otimes \Big( \cdots \otimes \Big(1 \otimes \frac{1}{\mathbf{f}}\Big)\cdots\Big)\Big)= \gamma_{j+1}\Big(\frac{r}{\mathbf{f}^{p^j}}\Big).
\end{align*}
Thus the diagram in the statement of the theorem commutes.

(iv) The top row in the diagram gives rise to a direct system whose direct limit is $R_{\mathbf{f}}$. The bottom row gives rise to a direct system, whose direct limit we denote by ${\mathcal M}$; it is clear that ${\mathcal M} \cong R_{\mathbf{f}}$. Recall from \ref{GLF.1} that the map ${\mathcal M} \lra F({\mathcal M})$ induced by the commutative diagram
\[
\begin{picture}(300,75)(-150,-25)
\put(-140,40){\makebox(0,0){$                   R\frac{1}{\mathbf{f}}
$}}
\put(-70,40){\makebox(0,0){$                    F(R\frac{1}{\mathbf{f}})
$}}
\put(-7,40){\makebox(0,0){$                      \cdots
$}}
\put(62,40){\makebox(0,0){$                     F^i(R\frac{1}{\mathbf{f}})
$}}
\put(140,40){\makebox(0,0){$                    F^{i+1}(R\frac{1}{\mathbf{f}})
$}}
\put(-129,40){\vector(1,0){40}}
\put(-109,44){\makebox(0,0){$^{             \widetilde{\theta}
}$}}
\put(-54,40){\vector(1,0){36}}
\put(-36,44){\makebox(0,0){$^{             F(\widetilde{\theta})
}$}}
\put(0,40){\vector(1,0){40}}
\put(20,44){\makebox(0,0){$^{             F^{i-1}(\widetilde{\theta})
}$}}
\put(79,40){\vector(1,0){38}}
\put(98,44){\makebox(0,0){$^{             F^i(\widetilde{\theta})
}$}}
\put(-68,10){\makebox(0,0)[l]{$^{              F(\widetilde{\theta})
}$}}
\put(-140,-20){\makebox(0,0){$               F(R\frac{1}{\mathbf{f}})
$}}
\put(-70,-20){\makebox(0,0){$                F^2(R\frac{1}{\mathbf{f}})
$}}
\put(-7,-20){\makebox(0,0){$                  \cdots
$}}
\put(62,-20){\makebox(0,0){$                    F^{i+1}(R\frac{1}{\mathbf{f}})
$}}
\put(143,-20){\makebox(0,0){$                   F^{i+2}(R\frac{1}{\mathbf{f}})
$}}
\put(-125,-20){\vector(1,0){34}}
\put(-108,-16){\makebox(0,0){$^{             F(\widetilde{\theta})
}$}}
\put(-53,-20){\vector(1,0){34}}
\put(-36,-16){\makebox(0,0){$^{            F^2(\widetilde{\theta})
}$}}
\put(6,-20){\vector(1,0){32}}
\put(85,-20){\vector(1,0){34}}
\put(102,-16){\makebox(0,0){$^{            F^{i+1}(\widetilde{\theta})
}$}}
\put(-140,30){\vector(0,-1){40}}
\put(-134,10){\makebox(0,0){$^{             \widetilde{\theta}
}$}}
\put(-70,30){\vector(0,-1){40}}
\put(57,30){\vector(0,-1){40}}
\put(59,10){\makebox(0,0)[l]{$^{              F^{i}(\widetilde{\theta})
}$}}
\put(140,30){\vector(0,-1){40}}
\put(142,10){\makebox(0,0)[l]{$^{              F^{i+1}(\widetilde{\theta})
}$}}
\put(199,40){\makebox(0,0){$                      \cdots
$}}
\put(164,-20){\vector(1,0){24}}
\put(164,40){\vector(1,0){24}}
\put(199,-20){\makebox(0,0){$                      \cdots
$}}
\end{picture}
\]
is an isomorphism. Therefore $R_{\mathbf{f}}$ is an $F$-finite $F$-module with $\widetilde{\theta}: R(1/\mathbf{f}) \lra F(R(1/\mathbf{f}))$ as a root morphism. The final claim follows from the definition of $\widetilde{\theta}$.
\end{proof}

In the remainder of this section, we shall show that, if ${\mathcal M}$ is any $F$-finite $F$-module, then the module of generalized fractions ${\mathcal M}_{\mathbf{f}}$ is an $F$-finite $F$-module. It will follow that, if ${\mathbf g_1},\ldots,{\mathbf g_t}$ are finite sequences of elements of $R$ (possibly of different lengths), then the $R$-module
$$
(\ldots(({\mathcal M}_{\mathbf{g_1}})_{\mathbf{g_2}})\ldots)_{\mathbf{g_t}},
$$
produced by $t$ successive constructions of modules of generalized fractions, is again an $F$-finite $F$-module. In particular, since $R$ itself is an $F$-finite $F$-module,
$
(\ldots((R_{\mathbf{g_1}})_{\mathbf{g_2}})\ldots)_{\mathbf{g_t}}
$
is an $F$-finite $F$-module.

In the situation of \ref{add.1}, that lemma yields an isomorphism $F(X) \otimes_R F(Y) \stackrel{\cong}{\lra} F(X \otimes_R Y)$, and application of the functor $F$ produces an isomorphism $F(F(X) \otimes_R F(Y)) \stackrel{\cong}{\lra} F^2(X \otimes_R Y)$.  However, application of \ref{add.1} to the $R$-modules $F(X)$ and $F(Y)$ yields an isomorphism $F^2(X) \otimes_R F^2(Y) \stackrel{\cong}{\lra} F(F(X) \otimes_R F(Y))$ and composition of the latter two isomorphisms yields an isomorphism $$F^2(X) \otimes_R F^2(Y) \stackrel{\cong}{\lra} F^2(X \otimes_R Y).$$ It will be convenient to have some formal terminology and notation.

\begin{ntn}
\label{add.2} Let the situation and notation be as in \ref{add.1}. We refer to the isomorphism $$
\Delta : F(X) \otimes_R F(Y) \stackrel{\cong}{\lra} F(X \otimes_R Y)
$$ of \ref{add.1} as the {\em canonical isomorphism}. When there is need for greater precision, we shall denote $\Delta$ by $\Delta^1_{X,Y}$.

Suppose, inductively, that $i \in \N$ and the isomorphism $\Delta^i_{X,Y} : F^i(X) \otimes_R F^i(Y) \stackrel{\cong}{\lra} F^i(X \otimes_R Y)$ has been defined for all choices of $R$-modules $X$ and $Y$. Let $\Delta^{i+1}_{X,Y} : F^{i+1}(X) \otimes_R F^{i+1}(Y) \stackrel{\cong}{\lra} F^{i+1}(X \otimes_R Y)$ be the composition of the isomorphisms
$$
\Delta^1_{F^i(X),F^i(Y)} : F^{i+1}(X) \otimes_R F^{i+1}(Y) \stackrel{\cong}{\lra} F(F^i(X) \otimes_R F^i(Y))
$$
and
$$
F(\Delta^i_{X,Y}) : F(F^i(X) \otimes_R F^i(Y)) \stackrel{\cong}{\lra} F(F^i(X \otimes_R Y)) = F^{i+1}(X \otimes_R Y).
$$
This induction defines the isomorphism $\Delta^j_{X,Y} : F^j(X) \otimes_R F^j(Y) \stackrel{\cong}{\lra} F^j(X \otimes_R Y)$, called the {\em canonical isomorphism}, for all $j \in \N$. We shall occasionally extend the notation and use $\Delta^0_{X,Y} : X \otimes_R Y\lra X \otimes_R Y$ to denote the identity map on $X \otimes_R Y$.
\end{ntn}

\begin{lem}
\label{add.2a} For arbitrary $R$-modules $X$ and $Y$, and $R$-homomorphisms $\lambda : X \lra F(X)$ and $\mu : Y \lra F(Y)$, the diagram
\[
\begin{picture}(300,75)(-150,-25)
\put(-25,40){\makebox(0,0)[r]{$ F(X)\otimes_RF(Y)
$}} \put(25,40){\makebox(0,0)[l]{$ F^2(X) \otimes_R F^2(Y) $}} \put(0,44){\makebox(0,0){$^{
F(\lambda) \otimes F(\mu)}$}} \put(-47,10){\makebox(0,0)[l]{$^{ \cong }$}}
\put(-24,40){\vector(1,0){48}} \put(-51,10){\makebox(0,0)[r]{$^{
\Delta^1_{X,Y} }$}}
\put(52,10){\makebox(0,0)[l]{$^{ \Delta^1_{F(X),F(Y)} }$}} \put(-25,-20){\makebox(0,0)[r]{$ F(X\otimes_RY) $}} \put(25,-20){\makebox(0,0)[l]{$
F(F(X)\otimes_RF(Y))$}}
\put(0,-16){\makebox(0,0){$^{ F(\lambda \otimes \mu) }$}}
\put(47,10){\makebox(0,0)[r]{$^{ \cong }$}}
\put(-22,-20){\vector(1,0){44}} \put(-50,30){\vector(0,-1){40}}
\put(50,30){\vector(0,-1){40}}
\end{picture}
\]
commutes, where $\Delta^1_{X,Y}$ and $\Delta^1_{F(X),F(Y)}$ are as defined in\/ {\rm \ref{add.2}}.
\end{lem}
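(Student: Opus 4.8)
The plan is to reduce the assertion to a direct substitution on a generating set. Every element of $F(X) \otimes_R F(Y)$ is a finite sum of elements of the shape $(r' \otimes x) \otimes (s' \otimes y)$ with $r', s' \in R'$, $x \in X$ and $y \in Y$; since all four maps appearing in the square are additive (indeed $R$-linear), it suffices to chase one such element around the two composites and check that the outcomes coincide. By \ref{add.1}, the isomorphisms $\Delta^1_{X,Y}$ and $\Delta^1_{F(X),F(Y)}$ are determined by their effect on elementary tensors of this type, so no information about their internal construction (the $\widetilde{W}$ identifications in the proof of \ref{add.1}) is needed beyond the stated formula.

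First I would follow the element $(r' \otimes x) \otimes (s' \otimes y)$ along the top edge and then down the right edge. Since $F(\lambda) = \Id_{R'} \otimes \lambda$ and $F(\mu) = \Id_{R'} \otimes \mu$, the map $F(\lambda) \otimes F(\mu)$ sends it to $(r' \otimes \lambda(x)) \otimes (s' \otimes \mu(y))$, an element of $F(F(X)) \otimes_R F(F(Y))$. Applying the defining formula of \ref{add.1} with $F(X)$ and $F(Y)$ playing the roles of $X$ and $Y$, the isomorphism $\Delta^1_{F(X),F(Y)}$ carries this to $r's' \otimes (\lambda(x) \otimes \mu(y)) \in F(F(X) \otimes_R F(Y))$. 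Going the other way, $\Delta^1_{X,Y}$ sends $(r' \otimes x) \otimes (s' \otimes y)$ to $r's' \otimes (x \otimes y) \in F(X \otimes_R Y)$, and then $F(\lambda \otimes \mu) = \Id_{R'} \otimes (\lambda \otimes \mu)$ sends this to $r's' \otimes (\lambda \otimes \mu)(x \otimes y) = r's' \otimes (\lambda(x) \otimes \mu(y))$. The two expressions agree, so the diagram commutes.

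The verification is essentially routine; the only point demanding care — and the closest this gets to an obstacle — is keeping straight the several left- and right-$R$-module structures (with the right-hand ones twisted through the Frobenius homomorphism) that enter the various tensor products, so as to be confident that the formula of \ref{add.1} is being invoked for the correct bimodules. One should also note that $\lambda(x)$ and $\mu(y)$ are themselves sums of elementary tensors; but by additivity one may assume $\lambda(x) = a' \otimes x'$ and $\mu(y) = b' \otimes y'$, and the computation above is then unchanged. This lemma is the base case for an induction showing that the canonical isomorphisms $\Delta^j_{X,Y}$ are compatible with iterated applications of homomorphisms $\lambda$ and $\mu$, which is what one needs in order to pass generating morphisms through the tensor product construction.
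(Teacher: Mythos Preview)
Your proof is correct and is essentially identical to the paper's own argument: both chase an elementary tensor $(r'\otimes x)\otimes(s'\otimes y)$ around the two composites using the explicit formula for $\Delta$ from \ref{add.1}, arriving at $r's'\otimes(\lambda(x)\otimes\mu(y))$ either way. The paper's version is terser, omitting your remarks about additivity and the bimodule bookkeeping, but the substance is the same.
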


\begin{proof}[Proof.] Let $x \in X, y \in Y$ and $a',b'\in R'$; then
\begin{align*} \Delta^1_{F(X),F(Y)}\circ (F(\lambda) \otimes F(\mu))((a' \otimes x)\otimes(b' \otimes y)) & = \Delta^1_{F(X),F(Y)}((a' \otimes \lambda(x))\otimes(b' \otimes \mu(y))) \\ &= a'b'\otimes (\lambda(x)\otimes\mu(y)) =
F(\lambda \otimes \mu)(a'b'\otimes (x \otimes y)) \\ &= F(\lambda \otimes \mu)\circ \Delta^1_{X,Y}((a'\otimes x) \otimes (b'\otimes y)). \qedhere
\end{align*}
\end{proof}

\begin{disc}
\label{add.3} Assume that the ring $A$ is Noetherian and suppose we have a sequence
$$
X^0 \stackrel{f_0}{\lra} X^1 \lra \cdots \lra X^i \stackrel{f_i}{\lra} X^{i+1} \lra \cdots
$$
of $A$-modules and $A$-homomorphisms. By the {\em associated direct system\/} we shall mean the direct system indexed by the set $\nn$ of non-negative integers whose constituent modules are $X^0, X^1, \ldots, X^i, \ldots$ and whose constituent homomorphism for $i < k$ in $\nn$ is the composite homomorphism $f_{k-1} \circ \cdots \circ f_i$. Let $X^{\infty}$ denote the direct limit of this associated direct system.

Now suppose we have a second sequence
$$
Y^0 \stackrel{g_0}{\lra} Y^1 \lra \cdots \lra Y^i \stackrel{g_i}{\lra} Y^{i+1} \lra \cdots
$$
of $A$-modules and $A$-homomorphisms and that the associated direct system has direct limit $Y^{\infty}$.

Fix $i \in \nn$. The fact that tensor product commutes with direct limits means that $X^i \otimes_A Y^{\infty}$ is the direct limit of the direct system associated to the sequence
$$
X^i \otimes_AY^0 \stackrel{X^i \otimes g_0}{\lra} X^i \otimes_AY^1 \lra \cdots \lra X^i \otimes_AY^j \stackrel{X^i \otimes g_j}{\lra} X^i \otimes_AY^{j+1} \lra \cdots
$$
and $X^{\infty} \otimes_A Y^{\infty}$ is the direct limit of the direct system associated to the sequence
$$
X^0 \otimes_AY^{\infty} \stackrel{f_0 \otimes Y^{\infty}}{\lra} X^1 \otimes_AY^{\infty} \lra \cdots \lra X^i \otimes_AY^{\infty} \stackrel{f_i \otimes Y^{\infty}}{\lra} X^{i+1} \otimes_AY^{\infty} \lra \cdots.
$$

A straightforward argument involving these facts will show that $X^{\infty} \otimes_A Y^{\infty}$ is the direct limit of the direct system associated to the `diagonal' sequence
$$
X^0 \otimes_AY^0 \stackrel{f_0 \otimes g_0}{\lra} X^1 \otimes_AY^1 \lra \cdots \lra X^i \otimes_AY^i \stackrel{f_i \otimes g_i}{\lra} X^{i+1} \otimes_AY^{i+1} \lra \cdots.
$$
\end{disc}

For the remainder of this section, we shall work over $R$: see \ref{GLF.1}.

\begin{thm}
\label{add.4} Let ${\mathcal M}$ be an $F$-finite $F$-module over $R$ with generating morphism $\alpha : M \lra F(M)$, where $M$ is a finitely generated $R$-module. Also, let ${\mathcal N}$ be a second $F$-finite $F$-module over $R$ with generating morphism $\beta : N \lra F(N)$, where $N$ is a finitely generated $R$-module.

Then ${\mathcal M} \otimes_R {\mathcal N}$ is an $F$-finite $F$-module with generating morphism
$$
\gamma := \Delta \circ (\alpha \otimes \beta): M \otimes_R N \lra F(M\otimes_R N),
$$
where $\Delta : F(M) \otimes_R F(N) \stackrel{\cong}{\lra} F(M\otimes_R N)$ is the isomorphism given by Lemma\/ {\rm \ref{add.1}} (and denoted by $\Delta^1_{M,N}$ in\/ {\rm \ref{add.2}}).
\end{thm}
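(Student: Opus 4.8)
The plan is to exhibit $\mathcal{M} \otimes_R \mathcal{N}$ as the direct limit of the direct system built from $\gamma$ in the manner of \ref{GLF.1}, by comparing two direct systems term by term via the canonical isomorphisms $\Delta^i_{M,N}$ of \ref{add.2}.

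To begin, recall from \ref{GLF.1} that $\mathcal{M}$ is the direct limit of the direct system associated (in the sense of \ref{add.3}) to the sequence $M \xrightarrow{\alpha} F(M) \xrightarrow{F(\alpha)} F^2(M) \lra \cdots$, and likewise $\mathcal{N}$ is the direct limit of the direct system associated to $N \xrightarrow{\beta} F(N) \xrightarrow{F(\beta)} F^2(N) \lra \cdots$. Applying \ref{add.3} with $X^i = F^i(M)$, $f_i = F^i(\alpha)$, $Y^i = F^i(N)$ and $g_i = F^i(\beta)$, it follows that $\mathcal{M} \otimes_R \mathcal{N}$ is the direct limit of the direct system associated to the `diagonal' sequence
\[
M \otimes_R N \xrightarrow{\alpha \otimes \beta} F(M) \otimes_R F(N) \xrightarrow{F(\alpha) \otimes F(\beta)} F^2(M) \otimes_R F^2(N) \lra \cdots.
\]
On the other hand, the construction of \ref{GLF.1} applied to $\gamma : M \otimes_R N \lra F(M\otimes_R N)$ produces an $F$-module $\mathcal{P}$ which is the direct limit of the direct system associated to $M \otimes_R N \xrightarrow{\gamma} F(M \otimes_R N) \xrightarrow{F(\gamma)} F^2(M \otimes_R N) \lra \cdots$ and for which $\gamma$ is a generating morphism; since $M$ and $N$ are finitely generated, so is $M \otimes_R N$, whence $\mathcal{P}$ is $F$-finite.

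The heart of the proof will be to show, by induction on $i \in \nn$, that the square with top edge the map $F^i(\alpha) \otimes F^i(\beta) : F^i(M)\otimes_R F^i(N) \lra F^{i+1}(M)\otimes_R F^{i+1}(N)$, bottom edge the map $F^i(\gamma) : F^i(M\otimes_R N) \lra F^{i+1}(M\otimes_R N)$, left edge $\Delta^i_{M,N}$ and right edge $\Delta^{i+1}_{M,N}$ commutes. The case $i = 0$ is immediate, since $\Delta^0_{M,N}$ is the identity and $\gamma = \Delta^1_{M,N} \circ (\alpha \otimes \beta)$ by definition. For the inductive step one substitutes the recursive formulas $\Delta^{i+1}_{M,N} = F(\Delta^i_{M,N}) \circ \Delta^1_{F^i(M),F^i(N)}$ and $\Delta^{i+2}_{M,N} = F(\Delta^{i+1}_{M,N}) \circ \Delta^1_{F^{i+1}(M),F^{i+1}(N)}$ from \ref{add.2}, then uses Lemma \ref{add.2a} (with $X = F^i(M)$, $Y = F^i(N)$, $\lambda = F^i(\alpha)$, $\mu = F^i(\beta)$) to rewrite $\Delta^1_{F^{i+1}(M),F^{i+1}(N)} \circ (F^{i+1}(\alpha) \otimes F^{i+1}(\beta))$ as $F\big(F^i(\alpha) \otimes F^i(\beta)\big) \circ \Delta^1_{F^i(M),F^i(N)}$, and finally applies $F$ to the square for $i$ furnished by the inductive hypothesis, together with functoriality of $F$. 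I expect this diagram chase --- threading the recursive definition of the $\Delta^i$ through Lemma \ref{add.2a} --- to be the only genuinely fiddly part of the argument.

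Granting this commuting ladder, the isomorphisms $\Delta^i_{M,N}$ constitute an isomorphism of direct systems, and passing to direct limits yields an $R$-isomorphism $\mathcal{M} \otimes_R \mathcal{N} \stackrel{\cong}{\lra} \mathcal{P}$; since $\mathcal{P}$ is $F$-finite with generating morphism $\gamma$, so too is $\mathcal{M} \otimes_R \mathcal{N}$. Finally, one should record that the $F$-module structure transported to $\mathcal{M} \otimes_R \mathcal{N}$ in this way agrees with the structure obtained from the structure morphisms of $\mathcal{M}$ and $\mathcal{N}$ via $\Delta$; this is purely formal, since the structure morphisms of $\mathcal{M}$, $\mathcal{N}$ and $\mathcal{P}$ are each the maps induced on direct limits by the corresponding `shifted' ladders of the $F^i(\alpha)$, $F^i(\beta)$, $F^i(\gamma)$, and $\Delta$ commutes with direct limits. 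This completes the plan.
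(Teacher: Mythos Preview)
Your proposal is correct and follows essentially the same route as the paper's proof: both use \ref{add.3} to identify $\mathcal{M}\otimes_R\mathcal{N}$ with the direct limit of the diagonal sequence, then establish by induction on $i$ the commuting ladder with vertical maps $\Delta^i_{M,N}$, invoking Lemma \ref{add.2a} (with $X=F^i(M)$, $Y=F^i(N)$, $\lambda=F^i(\alpha)$, $\mu=F^i(\beta)$) together with the recursive definition of $\Delta^{i+1}_{M,N}$ from \ref{add.2} and the functoriality of $F$ applied to the inductive hypothesis. Your closing remark about compatibility of the transported $F$-module structure is a small addition not made explicit in the paper, but it is harmless and formally correct.
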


\begin{proof}[Proof.] We can assume that ${\mathcal M}$ is the direct limit of the direct system associated to the sequence
\[
\begin{picture}(300,15)(-150,35)
\put(-120,40){\makebox(0,0){$                   M
$}}
\put(-60,40){\makebox(0,0){$                    F(M)
$}}
\put(-7,40){\makebox(0,0){$                      \cdots
$}}
\put(52,40){\makebox(0,0){$                     F^i(M)
$}}
\put(120,40){\makebox(0,0){$                    F^{i+1}(M)
$}}
\put(-110,40){\vector(1,0){36}}
\put(-90,44){\makebox(0,0){$^{             \alpha
}$}}
\put(-47,40){\vector(1,0){29}}
\put(-34,44){\makebox(0,0){$^{             F(\alpha)
}$}}
\put(0,40){\vector(1,0){36}}
\put(20,44){\makebox(0,0){$^{             F^{i-1}(\alpha)
}$}}
\put(67,40){\vector(1,0){32}}
\put(83,44){\makebox(0,0){$^{             F^i(\alpha)
}$}}
\put(142,40){\vector(1,0){28}}
\put(180,40){\makebox(0,0){$                      \cdots
$}}
\end{picture}
\]
and that ${\mathcal N}$ is the direct limit of the direct system associated to the sequence
\[
\begin{picture}(300,15)(-150,35)
\put(-120,40){\makebox(0,0){$                   N
$}}
\put(-60,40){\makebox(0,0){$                    F(N)
$}}
\put(-7,40){\makebox(0,0){$                      \cdots
$}}
\put(52,40){\makebox(0,0){$                     F^i(N)
$}}
\put(120,40){\makebox(0,0){$                    F^{i+1}(N)
$}}
\put(-110,40){\vector(1,0){36}}
\put(-90,44){\makebox(0,0){$^{             \beta
}$}}
\put(-47,40){\vector(1,0){29}}
\put(-34,44){\makebox(0,0){$^{             F(\beta)
}$}}
\put(0,40){\vector(1,0){36}}
\put(20,44){\makebox(0,0){$^{             F^{i-1}(\beta)
}$}}
\put(67,40){\vector(1,0){32}}
\put(83,44){\makebox(0,0){$^{             F^i(\beta)
}$}}
\put(142,40){\vector(1,0){28}}
\put(180,40){\makebox(0,0){$                      \cdots.
$}}
\end{picture}
\]
For each $i \in \nn$, let $L^i := F^i(M) \otimes_RF^i(N)$ and let $d^i := F^i(\alpha) \otimes F^i(\beta) : F^i(M) \otimes_RF^i(N) = L^i \lra F^{i+1}(M) \otimes_RF^{i+1}(N) = L^{i+1}$. It follows from \ref{add.3} that ${\mathcal M} \otimes_R {\mathcal N}$ is isomorphic to the direct limit of the direct system associated to the `diagonal' sequence
$$
M \otimes_RN \stackrel{d^0}{\lra} F(M) \otimes_RF(N) \lra \cdots \lra F^i(M) \otimes_RF^i(N) \stackrel{d^i}{\lra} F^{i+1}(M) \otimes_RF^{i+1}(N) \lra \cdots.
$$

Observe that the diagram
\[
\begin{picture}(300,75)(-150,-25)
\put(-25,40){\makebox(0,0)[r]{$ M\otimes_RN
$}} \put(25,40){\makebox(0,0)[l]{$ F(M) \otimes_R F(N) $}} \put(0,44){\makebox(0,0){$^{
\alpha \otimes \beta}$}}
\put(-20,40){\vector(1,0){40}}
\put(53,10){\makebox(0,0)[r]{$^{ \Delta^1_{M,N} }$}} \put(-25,-20){\makebox(0,0)[r]{$ M\otimes_RN $}} \put(25,-20){\makebox(0,0)[l]{$
F(M\otimes_RN)$}}
\put(0,-16){\makebox(0,0){$^{ \gamma }$}}
\put(57,10){\makebox(0,0)[l]{$^{ \cong }$}}
\put(-20,-20){\vector(1,0){40}} \put(-46,30){\line(0,-1){40}} \put(-48,30){\line(0,-1){40}}
\put(54,30){\vector(0,-1){40}}
\end{picture}
\]
commutes. This fact is a basis for our inductive proof that, for all $i \in \nn$, the diagram
\[
\begin{picture}(300,75)(-150,-25)
\put(-30,40){\makebox(0,0)[r]{$ F^i(M)\otimes_RF^i(N)
$}} \put(30,40){\makebox(0,0)[l]{$ F^{i+1}(M) \otimes_R F^{i+1}(N) $}} \put(0,46){\makebox(0,0){$^{
F^i(\alpha) \otimes F^i(\beta)}$}} \put(-62,10){\makebox(0,0)[l]{$^{ \cong }$}}
\put(-27,40){\vector(1,0){54}} \put(-67,10){\makebox(0,0)[r]{$^{
\Delta^i_{M,N} }$}}
\put(63,10){\makebox(0,0)[r]{$^{ \Delta^{i+1}_{M,N} }$}} \put(-30,-20){\makebox(0,0)[r]{$ F^i(M\otimes_RN) $}} \put(30,-20){\makebox(0,0)[l]{$
F^{i+1}(M\otimes_RN)$}}
\put(0,-14){\makebox(0,0){$^{ F^i(\gamma) }$}}
\put(68,10){\makebox(0,0)[l]{$^{ \cong }$}}
\put(-27,-20){\vector(1,0){54}} \put(-65,30){\vector(0,-1){40}}
\put(65,30){\vector(0,-1){40}}
\end{picture}
\]
commutes. Suppose that the above diagram does indeed commute for some $i \in \nn$. Apply the functor $F$ to see that the diagram
\[
\begin{picture}(300,75)(-150,-25)
\put(-40,40){\makebox(0,0)[r]{$ F(F^i(M)\otimes_RF^i(N))
$}} \put(40,40){\makebox(0,0)[l]{$ F(F^{i+1}(M) \otimes_R F^{i+1}(N)) $}} \put(0,46){\makebox(0,0){$^{
F(F^i(\alpha) \otimes F^i(\beta))}$}} \put(-77,10){\makebox(0,0)[l]{$^{ \cong }$}}
\put(-35,40){\vector(1,0){70}} \put(-82,10){\makebox(0,0)[r]{$^{
F(\Delta^i_{M,N})}$}}
\put(78,10){\makebox(0,0)[r]{$^{F( \Delta^{i+1}_{M,N}) }$}} \put(-40,-20){\makebox(0,0)[r]{$ F^{i+1}(M\otimes_RN) $}} \put(40,-20){\makebox(0,0)[l]{$
F^{i+2}(M\otimes_RN)$}}
\put(0,-14){\makebox(0,0){$^{ F^{i+1}(\gamma) }$}}
\put(83,10){\makebox(0,0)[l]{$^{ \cong }$}}
\put(-35,-20){\vector(1,0){70}} \put(-80,30){\vector(0,-1){40}}
\put(80,30){\vector(0,-1){40}}
\end{picture}
\]
commutes.

By \ref{add.2a} applied to the $R$-modules $F^i(M)$ and $F^i(N)$ (and the homomorphisms $F^i(\alpha): F^i(M) \lra F^{i+1}(M)$ and $F^i(\beta): F^i(N) \lra F^{i+1}(N)$), the diagram
\[
\begin{picture}(300,75)(-150,-25)
\put(-40,40){\makebox(0,0)[r]{$ F^{i+1}(M)\otimes_RF^{i+1}(N)
$}} \put(40,40){\makebox(0,0)[l]{$ F^{i+2}(M) \otimes_R F^{i+2}(N) $}} \put(0,46){\makebox(0,0){$^{
F^{i+1}(\alpha) \otimes F^{i+1}(\beta)}$}} \put(-87,10){\makebox(0,0)[l]{$^{ \cong }$}}
\put(-35,40){\vector(1,0){70}} \put(-92,10){\makebox(0,0)[r]{$^{
\Delta^1_{F^i(M),F^i(N)} }$}}
\put(88,10){\makebox(0,0)[r]{$^{ \Delta^1_{F^{i+1}(M),F^{i+1}(N)} }$}} \put(-40,-20){\makebox(0,0)[r]{$ F(F^i(M)\otimes_RF^i(N)) $}} \put(40,-20){\makebox(0,0)[l]{$
F(F^{i+1}(M)\otimes_RF^{i+1}(N))$}}
\put(0,-14){\makebox(0,0){$^{ F(F^i(\alpha) \otimes F^i(\beta)) }$}}
\put(93,10){\makebox(0,0)[l]{$^{ \cong }$}}
\put(-35,-20){\vector(1,0){70}} \put(-90,30){\vector(0,-1){40}}
\put(90,30){\vector(0,-1){40}}
\end{picture}
\]
commutes. Now splice the last two commutative diagrams together, and recall from \ref{add.2} the definitions of $\Delta^{i+1}_{M,N}$ and
$\Delta^{i+2}_{M,N}$.  We can conclude that the diagram
\[
\begin{picture}(300,75)(-150,-25)
\put(-40,40){\makebox(0,0)[r]{$ F^{i+1}(M)\otimes_RF^{i+1}(N)
$}} \put(40,40){\makebox(0,0)[l]{$ F^{i+2}(M) \otimes_R F^{i+2}(N) $}} \put(0,46){\makebox(0,0){$^{
F^{i+1}(\alpha) \otimes F^{i+1}(\beta)}$}} \put(-77,10){\makebox(0,0)[l]{$^{ \cong }$}}
\put(-35,40){\vector(1,0){70}} \put(-82,10){\makebox(0,0)[r]{$^{
\Delta^{i+1}_{M,N} }$}}
\put(78,10){\makebox(0,0)[r]{$^{ \Delta^{i+2}_{M,N} }$}} \put(-40,-20){\makebox(0,0)[r]{$ F^{i+1}(M\otimes_RN) $}} \put(40,-20){\makebox(0,0)[l]{$
F^{i+2}(M\otimes_RN)$}}
\put(0,-14){\makebox(0,0){$^{ F^{i+1}(\gamma) }$}}
\put(83,10){\makebox(0,0)[l]{$^{ \cong }$}}
\put(-35,-20){\vector(1,0){70}} \put(-80,30){\vector(0,-1){40}}
\put(80,30){\vector(0,-1){40}}
\end{picture}
\]
commutes. This completes our inductive argument that shows that the diagram
\[
\begin{picture}(300,75)(-150,-25)
\put(-30,40){\makebox(0,0)[r]{$ F^i(M)\otimes_RF^i(N)
$}} \put(30,40){\makebox(0,0)[l]{$ F^{i+1}(M) \otimes_R F^{i+1}(N) $}} \put(0,46){\makebox(0,0){$^{
F^i(\alpha) \otimes F^i(\beta)}$}} \put(-62,10){\makebox(0,0)[l]{$^{ \cong }$}}
\put(-27,40){\vector(1,0){54}} \put(-67,10){\makebox(0,0)[r]{$^{
\Delta^i_{M,N} }$}}
\put(63,10){\makebox(0,0)[r]{$^{ \Delta^{i+1}_{M,N} }$}} \put(-30,-20){\makebox(0,0)[r]{$ F^i(M\otimes_RN) $}} \put(30,-20){\makebox(0,0)[l]{$
F^{i+1}(M\otimes_RN)$}}
\put(0,-14){\makebox(0,0){$^{ F^i(\gamma) }$}}
\put(68,10){\makebox(0,0)[l]{$^{ \cong }$}}
\put(-27,-20){\vector(1,0){54}} \put(-65,30){\vector(0,-1){40}}
\put(65,30){\vector(0,-1){40}}
\end{picture}
\]
commutes for all $i \in \nn$. It follows that ${\mathcal M} \otimes_R {\mathcal N}$ is isomorphic to the direct limit of the direct system associated to the sequence\[
\begin{picture}(300,15)(-150,35)
\put(-190,40){\makebox(0,0){$                 M\otimes_R N
$}}
\put(-110,40){\makebox(0,0){$                    F(M\otimes_RN)
$}}
\put(-40,40){\makebox(0,0){$                      \cdots
$}}
\put(40,40){\makebox(0,0){$                     F^i(M\otimes_RN)
$}}
\put(140,40){\makebox(0,0){$                    F^{i+1}(M\otimes_RN)
$}}
\put(-170,40){\vector(1,0){30}}
\put(-155,44){\makebox(0,0){$^{             \gamma
}$}}
\put(-80,40){\vector(1,0){30}}
\put(-65,44){\makebox(0,0){$^{             F(\gamma)
}$}}
\put(-32,40){\vector(1,0){40}}
\put(-10,44){\makebox(0,0){$^{             F^{i-1}(\gamma)
}$}}
\put(72,40){\vector(1,0){32}}
\put(88,44){\makebox(0,0){$^{             F^i(\gamma)
}$}}
\put(176,40){\vector(1,0){20}}
\put(210,40){\makebox(0,0){$                      \cdots.
$}}
\end{picture}
\]
Since $M \otimes_RN$ is a finitely generated $R$-module, we deduce that ${\mathcal M} \otimes_R {\mathcal N}$ is an $F$-finite $F$-module with generating morphism
$
\gamma := \Delta^1_{M,N} \circ (\alpha \otimes \beta): M \otimes_R N \lra F(M\otimes_R N).
$
\end{proof}

\begin{cor}
\label{add.5} Let ${\mathcal M}$ be an $F$-finite $F$-module over $R$. Let $n \in \N$ and let $\mathbf{f} := (f_1, \ldots, f_n) \in R^n$. Then the module of generalized fractions ${\mathcal M}_{\mathbf{f}} := U_{\mathbf{f}}^{-n}{\mathcal M}$ is an $F$-finite $F$-module.
\end{cor}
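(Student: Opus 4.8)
The plan is to identify $\mathcal{M}_{\mathbf{f}}$ with the tensor product $R_{\mathbf{f}} \otimes_R \mathcal{M}$ and then apply Theorem \ref{add.4}. By Lemma \ref{calc.6} (taken with $U = U_{\mathbf{f}}$), the map $\mu_{\mathcal{M}} : R_{\mathbf{f}} \otimes_R \mathcal{M} = U_{\mathbf{f}}^{-n}R \otimes_R \mathcal{M} \stackrel{\cong}{\lra} U_{\mathbf{f}}^{-n}\mathcal{M} = \mathcal{M}_{\mathbf{f}}$ is an $R$-isomorphism. Moreover, the structure isomorphism $\kappa$ placed on $\mathcal{M}_{\mathbf{f}}$ in Theorem \ref{mm.1g} was defined there precisely as the transport, along $\mu_{\mathcal{M}}^{-1}$, of the composite $\Delta \circ (\theta \otimes \nu)$, where $\theta$ is the structure isomorphism of $R_{\mathbf{f}} = U_{\mathbf{f}}^{-n}R$ of Proposition \ref{gf.106xy}, $\nu$ is the given structure isomorphism of $\mathcal{M}$, and $\Delta$ is the canonical isomorphism of Lemma \ref{add.1}. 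Hence $\mu_{\mathcal{M}}$ is an isomorphism of $F$-modules from $R_{\mathbf{f}} \otimes_R \mathcal{M}$, equipped with the structure morphism $\Delta \circ (\theta \otimes \nu)$, onto $(\mathcal{M}_{\mathbf{f}}, \kappa)$; and $\Delta \circ (\theta \otimes \nu)$ is exactly the $F$-module structure carried by $R_{\mathbf{f}} \otimes_R \mathcal{M}$, in the sense of Theorem \ref{add.4}, as the tensor product of the $F$-modules $R_{\mathbf{f}}$ and $\mathcal{M}$.

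It therefore suffices to check that $R_{\mathbf{f}}$ and $\mathcal{M}$ are both $F$-finite, and then appeal to Theorem \ref{add.4}. That $\mathcal{M}$ is $F$-finite is the hypothesis, so it has a generating morphism $\alpha : M \lra F(M)$ with $M$ a finitely generated $R$-module. That $R_{\mathbf{f}}$ is $F$-finite is Theorem \ref{gf.107}(iv): it has the root morphism $\widetilde{\theta} : R(1/\mathbf{f}) \lra F(R(1/\mathbf{f}))$ on the cyclic, hence finitely generated, module $R(1/\mathbf{f})$, and by \ref{gf.107}(iii) the structure isomorphism it generates is the $\theta$ of \ref{gf.106xy}. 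Applying Theorem \ref{add.4} to $R_{\mathbf{f}}$ via $\widetilde{\theta}$ and to $\mathcal{M}$ via $\alpha$, we obtain that $R_{\mathbf{f}} \otimes_R \mathcal{M}$ is an $F$-finite $F$-module, with generating morphism $\Delta \circ (\widetilde{\theta} \otimes \alpha)$ on the finitely generated module $R(1/\mathbf{f}) \otimes_R M$. Transporting this structure along the $F$-module isomorphism $\mu_{\mathcal{M}}$, we conclude that $\mathcal{M}_{\mathbf{f}}$ is $F$-finite, as required. (The iterated assertion about $(\ldots((R_{\mathbf{g_1}})_{\mathbf{g_2}})\ldots)_{\mathbf{g_t}}$ then follows by an obvious induction, since $R$ itself is an $F$-finite $F$-module.)

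Essentially all of the substantive work has already been carried out in Lemma \ref{add.1}, Theorem \ref{add.4} and Theorem \ref{gf.107}, so this corollary is little more than an assembly of those results. The one point needing a moment's care is the verification that the structure isomorphism $\kappa$ furnished by \ref{mm.1g} coincides with the tensor-product $F$-module structure on $R_{\mathbf{f}} \otimes_R \mathcal{M}$ to which Theorem \ref{add.4} applies; this is immediate once one unwinds the definition of $\kappa$ given in the proof of \ref{mm.1g}, so I anticipate no genuine obstacle.
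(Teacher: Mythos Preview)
Your proof is correct and follows essentially the same approach as the paper: identify $\mathcal{M}_{\mathbf{f}}$ with $R_{\mathbf{f}}\otimes_R\mathcal{M}$ via Lemma~\ref{calc.6}, note that $R_{\mathbf{f}}$ is $F$-finite by Theorem~\ref{gf.107}(iv), and apply Theorem~\ref{add.4}. The paper's proof is a two-line sketch; you supply additional care in verifying that the $F$-module structure on $\mathcal{M}_{\mathbf{f}}$ from Theorem~\ref{mm.1g} agrees, under $\mu_{\mathcal{M}}$, with the tensor-product $F$-module structure produced by Theorem~\ref{add.4}, which the paper leaves implicit. Your parenthetical remark about iterated generalized fractions belongs to the next corollary rather than this one, but that is harmless.
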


\begin{proof}[Proof.] This is immediate from \ref{add.4} once it is recalled from \ref{calc.6} that ${\mathcal M}_{\mathbf{f}} := U_{\mathbf{f}}^{-n}{\mathcal M} \cong {\mathcal M} \otimes_RU_{\mathbf{f}}^{-n}(R)$ and from \ref{gf.107}(iv) that $U_{\mathbf{f}}^{-n}(R)$ is an $F$-finite $F$-module.
\end{proof}

\begin{cor}
\label{add.6} Let ${\mathcal M}$ be an $F$-finite $F$-module over $R$. If ${\mathbf g_1},\ldots,{\mathbf g_t}$ are finite sequences of elements of $R$ (possibly of different lengths), then the $R$-module
$$
(\ldots(({\mathcal M}_{\mathbf{g_1}})_{\mathbf{g_2}})\ldots)_{\mathbf{g_t}},
$$
produced by $t$ successive constructions of modules of generalized fractions, is again an $F$-finite $F$-module. In particular,
$
(\ldots((R_{\mathbf{g_1}})_{\mathbf{g_2}})\ldots)_{\mathbf{g_t}}
$
is an $F$-finite $F$-module.
\end{cor}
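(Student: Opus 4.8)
\textit{Proof proposal.} The plan is a routine induction on $t$, with Corollary \ref{add.5} as the only substantial ingredient. For $t = 1$ the assertion that $\mathcal{M}_{\mathbf{g_1}}$ is an $F$-finite $F$-module is exactly Corollary \ref{add.5}. For the inductive step, assume $t \geq 2$ and that the result is already known for $t-1$ successive generalized-fraction constructions applied to an arbitrary $F$-finite $F$-module over $R$. Applying the inductive hypothesis to $\mathcal{M}$ together with the sequences $\mathbf{g_1}, \ldots, \mathbf{g_{t-1}}$, the module $\mathcal{M}' := (\ldots((\mathcal{M}_{\mathbf{g_1}})_{\mathbf{g_2}})\ldots)_{\mathbf{g_{t-1}}}$ is an $F$-finite $F$-module over $R$; since $(\ldots((\mathcal{M}_{\mathbf{g_1}})_{\mathbf{g_2}})\ldots)_{\mathbf{g_t}} = \mathcal{M}'_{\mathbf{g_t}}$, a further application of Corollary \ref{add.5} (this time to $\mathcal{M}'$ and $\mathbf{g_t}$) completes the induction.

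For the final sentence one only needs to observe that $R$ itself is an $F$-finite $F$-module: under the canonical identification $F(R) = R' \otimes_R R \cong R$ the identity map serves as a root morphism, so that $R$ is its own root. The displayed conclusion about $(\ldots((R_{\mathbf{g_1}})_{\mathbf{g_2}})\ldots)_{\mathbf{g_t}}$ is then simply the special case $\mathcal{M} = R$ of the first part.

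I expect no genuine obstacle here: the real content has already been carried out in Lemma \ref{add.1}, Theorem \ref{add.4}, Lemma \ref{calc.6} and Corollary \ref{add.5}. The single point worth a line is that Corollary \ref{add.5} imposes no hypothesis on its input beyond being an $F$-finite $F$-module over $R$, so the passage $\mathcal{M} \mapsto \mathcal{M}_{\mathbf{g}}$ may be iterated freely with no extra bookkeeping; this is precisely what makes the induction go through.
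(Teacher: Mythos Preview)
Your proof is correct and follows exactly the paper's approach: the paper's proof is the one-line ``This is immediate from \ref{add.5}; note that $R$ itself is an $F$-finite $F$-module,'' and your induction merely makes that immediacy explicit.
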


\begin{proof}[Proof.] This is immediate from \ref{add.5}; note that $R$ itself is an $F$-finite $F$-module.
\end{proof}

\begin{lem}
\label{alg.5} Let ${\mathcal M}$ be an $F$-finite $F$-module over $R$ with root $N$. The following hold:
\begin{enumerate}
\item $\mu^i(\fp,F(N)) \leq \mu^i(\fp,N)$ for all $i \in \nn$ and $\fp \in \Spec(R)$;
\item if $\fm$ is a maximal ideal of $R$, then $\mu^0(\fm,{\mathcal M}) = \mu^0(\fm,N)= \mu^0(\fm,F(N))$.
\end{enumerate}
\end{lem}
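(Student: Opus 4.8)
The plan is to prove (i) by transporting a minimal injective resolution of $N$ through the Frobenius functor $F$, and then to deduce (ii) by squeezing the sequence $\bigl(\mu^0(\fm,F^i(N))\bigr)_{i\ge 0}$ between two opposite monotonicities.

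For (i), I would begin with a minimal injective resolution $0\to N\to E^0\to E^1\to\cdots$ of $N$, so that $\mu^i(\fp,N)$ equals the multiplicity of $E_R(R/\fp)$ as an indecomposable summand of $E^i$. Since $R$ is regular, $F$ is exact, so $0\to F(N)\to F(E^0)\to F(E^1)\to\cdots$ is exact; and by Huneke--Sharp \cite[Proposition 1.5]{58} every injective $R$-module is an $F$-module, whence each $F(E^i)$ is injective and in fact isomorphic to $E^i$, so (by uniqueness of the decomposition of an injective module into indecomposables) $E_R(R/\fp)$ occurs in $F(E^i)$ with the same multiplicity $\mu^i(\fp,N)$ as in $E^i$. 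Thus $F(E^\bullet)$ is an injective resolution of $F(N)$, and since the minimal injective resolution of $F(N)$ is a direct summand (as a complex) of any injective resolution, $\mu^i(\fp,F(N))$ is at most the multiplicity of $E_R(R/\fp)$ in $F(E^i)$, namely $\mu^i(\fp,N)$.

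For (ii), first note that for a maximal ideal $\fm$ and any $R$-module $X$ one has $\mu^0(\fm,X)=\dim_{k(\fm)}\Hom_R(R/\fm,X)$, because $\Hom_R(R/\fm,X)$ is already a $k(\fm)$-vector space and hence unaffected by localizing at $\fm$. Next I would realize ${\mathcal M}$ as the direct limit of the system
$$
N \stackrel{\alpha}{\lra} F(N) \stackrel{F(\alpha)}{\lra} F^2(N) \stackrel{F^2(\alpha)}{\lra} \cdots,
$$
where $\alpha$ is the given root morphism; $\alpha$ is injective and $F$ is exact, so every transition map is injective. Applying $\Hom_R(R/\fm,-)$, which is left exact and commutes with this filtered colimit since $R/\fm$ is finitely presented, gives $\Hom_R(R/\fm,{\mathcal M})=\varinjlim_i\Hom_R(R/\fm,F^i(N))$, a filtered colimit of $k(\fm)$-spaces with injective transition maps; hence $\mu^0(\fm,{\mathcal M})=\sup_i\mu^0(\fm,F^i(N))$ and the sequence $\bigl(\mu^0(\fm,F^i(N))\bigr)_i$ is non-decreasing. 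On the other hand, for each $i\ge 1$ the map $F^{i-1}(\alpha)\colon F^{i-1}(N)\to F\bigl(F^{i-1}(N)\bigr)$ is again a root morphism for ${\mathcal M}$ (injective, finitely generated source, cofinal tail), so part (i) applied to $F^{i-1}(N)$ gives $\mu^0(\fm,F^i(N))\le\mu^0(\fm,F^{i-1}(N))$; the sequence is therefore also non-increasing, hence constant with value $\mu^0(\fm,N)$. Consequently $\mu^0(\fm,{\mathcal M})=\mu^0(\fm,N)$, and the case $i=1$ gives $\mu^0(\fm,F(N))=\mu^0(\fm,N)$.

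I expect the only point needing genuine care is the verification in (i) that $F$ carries a minimal injective resolution to an injective resolution with termwise-isomorphic terms, so that Bass numbers can only drop under $F$; the rest of (ii) is the short monotonicity squeeze, which is immediate once (i) and the colimit description of ${\mathcal M}$ are in place.
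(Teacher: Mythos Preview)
Your proof is correct. Part (i) is essentially identical to the paper's argument: both apply $F$ to a minimal injective resolution of $N$, invoke Huneke--Sharp to see that each $F(E^i)\cong E^i$, and then compare with the minimal injective resolution of $F(N)$ as a direct summand.

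For part (ii) your route differs from the paper's. The paper first proves $\mu^0(\fm,F(N))\geq \mu^0(\fm,N)$ directly, by observing that $N$ contains a copy of $(R/\fm)^h$ (with $h=\mu^0(\fm,N)$), applying $F$ to obtain $(R/\fm^{[p]})^h\subseteq F(N)$, and noting that $R/\fm^{[p]}$ contains a copy of $R/\fm$; combined with (i) this gives constancy of the sequence, after which the paper appeals to a modification of \cite[1.7]{58} for the upper bound $\mu^0(\fm,{\mathcal M})\leq h$ and to the embedding $N\hookrightarrow{\mathcal M}$ for the lower bound. Your argument replaces both the explicit socle computation and the external reference by a single observation: injectivity of the transition maps forces the sequence $\bigl(\mu^0(\fm,F^i(N))\bigr)_i$ to be non-decreasing, while (i) forces it to be non-increasing, so it is constant; then commutation of $\Hom_R(R/\fm,-)$ with the filtered colimit (since $R/\fm$ is finitely presented) identifies $\mu^0(\fm,{\mathcal M})$ with the supremum, hence with $\mu^0(\fm,N)$. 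This is a cleaner and more self-contained argument. One small remark: you invoke (i) with $F^{i-1}(N)$ in place of $N$ and justify this by noting that $F^{i-1}(N)$ is again a root; this is fine, but in fact the proof of (i) never uses the root hypothesis, so (i) holds for any finitely generated $R$-module and the justification is unnecessary.
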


\begin{proof}[Proof.] There is an $R$-monomorphism $\beta : N \lra F(N)$ such that ${\mathcal M}$ is the direct limit of the resulting direct system $(F^i(N))_{i\in \nn}$.

(i) Let the exact sequence
\[
0 \lra N \lra E^0(N) \stackrel{d^0}{\lra} E^{1}(N) \lra \cdots \lra E^{i}(N) \lra \cdots
\]
provide the injective resolution of $N$, so that, for each $i \in \nn$,
\[
E^i(N) \cong \bigoplus_{\fp \in \Spec(R)} E(R/\fp)^{\mu^i(\fp,N)},
\]
where $ E^{\mu}$ denotes a direct sum of $\mu$ copies of $E$. Apply the exact functor $F$: since $F(E) \cong E$ for each injective $R$-module $E$ (by \cite[Proposition 1.5]{58}), the exact sequence
\[
0 \lra F(N) \lra F(E^0(N)) \stackrel{F(d^0)}{\lra} F(E^{1}(N)) \lra \cdots \lra F(E^{i}(N)) \lra \cdots
\]
provides an injective resolution of $F(N)$, so that $F(E^{i}(N))$ has a direct summand isomorphic to $E^{i}(F(N))$, for each  $i \in \nn$ (by \cite[11.1.11]{LC}, for example). Another use of \cite[Proposition 1.5]{58} now shows that
\[
F(E^i(N)) \cong \bigoplus_{\fp \in \Spec(R)} F(E(R/\fp))^{\mu^i(\fp,N)}\cong \bigoplus_{\fp \in \Spec(R)} E(R/\fp)^{\mu^i(\fp,N)},
\]
for each $i \in \nn$. It follows that $\mu^i(\fp,F(N)) \leq \mu^i(\fp,N)$ for all $i \in \nn$ and $\fp \in \Spec(R)$.

(ii) By part (i) above, $\mu^0(\fm,F(N)) \leq \mu^0(\fm,N)$. Write $\mu^0(\fm,N) = h$; then $E^0(N)$ has a submodule isomorphic to $ (R/\fm)^h$, so that, because $\fm$ is maximal, $N$ has a submodule isomorphic to $ (R/\fm)^h$. Apply the exact functor $F$: we see that $F(N)$ has a submodule isomorphic to $(R/\fm^{[p]})^h$. But $\fm$ is the unique minimal prime ideal of $\fm^{[p]}$, and so $R/\fm^{[p]}$ has a submodule isomorphic to $R/\fm$. Thus $F(N)$ has a submodule isomorphic to $ (R/\fm)^h$ and $\mu^0(\fm,F(N)) \geq h$. Therefore $\mu^0(\fm,N) = \mu^0(\fm,F(N)) = \mu^0(\fm,F^k(N))$ for all $k \in \N$. A minor modification of \cite[1.7]{58} now enables us to see that $\mu^0(\fm,{\mathcal M}) \leq h$. On the other hand, the fact that $N$ can be embedded in ${\mathcal M}$ ensures that $\mu^0(\fm,{\mathcal M}) \geq h$.
\end{proof}

\section{\bf Filter-regular sequences}
\label{frs}

Throughout this section, we shall assume that the commutative ring $A$ is Noetherian. For most of the section, we shall work over $A$;  we shall not make any assumption about the characteristic of $A$, and we shall not assume that $A$ is regular. Also, $\fb$ will denote an ideal of $A$. The {\em variety $\Var(\fb)$ of $\fb$\/} is the set $\{\fp \in \Spec(A) : \fp \supseteq \fb\}$. We shall use $M$ to denote an arbitrary $A$-module; $M$ will only be assumed to be finitely generated when this is explicitly stated. We say that $b_1, \ldots, b_n \in A$ form (or is) a {\em poor $M$-sequence\/} if $b_i$ is a non-zero-divisor on $M/\sum_{j=1}^{i-1} b_jM$ for all $i = 1, \ldots, n$. We say that an $A$-module $N$ is {\em $\fb$-torsion\/} if each element of $N$ is annihilated by some power of $\fb$; note that this is the case if and only if $\Supp(N) \subseteq \Var(\fb)$.

\begin{defi}
\label{defilt.1} Let $a_1 ,\ldots, a_n \in A$.  We say that $a_1, \ldots, a_n$ form (or is) a {\em $\fb$-filter-regular sequence on $M$\/} (of length $n$) if
\[
\Supp\big(  \big({\textstyle \sum_{j=1}^{i-1} a_jM:_Ma_i}\big)\big/\big({\textstyle \sum_{j=1}^{i-1} a_jM}\big)\big) \subseteq \Var(\fb)  \quad \mbox{for all~} i = 1, \ldots, n.
\]
We regard the empty sequence as a $\fb$-filter-regular sequence on $M$ of length $0$. We say that an infinite sequence $(a_i)_{i\in\N}$ of elements of $A$ is a {\em $\fb$-filter-regular sequence on $M$\/} if $a_1, \ldots, a_t$ is a $\fb$-filter-regular sequence on $M$ for all $t \in \N$. If $a_1 ,\ldots, a_n$ is a $\fb$-filter-regular sequence on $M$, then $a_1 ,\ldots, a_n, 1, 1, \ldots$ is an infinite $\fb$-filter-regular sequence on $M$.

The reader should note that in this definition we have not required the elements of a $\fb$-filter-regular sequence on $M$ to be members of $\fb$. We think there are advantages in this approach, even though several authors have required that the elements of an $\fm$-filter-regular sequence on a local ring $(R,\fm)$ belong to $\fm$.
\end{defi}

The proof of the following lemma is elementary and left to the reader.

\begin{lem}
\label{defilt.2} Let $a_1 ,\ldots, a_n\in A$.  Then the following statements are equivalent:
\begin{enumerate}
\item $a_1 ,\ldots, a_n$ is a $\fb$-filter-regular sequence on $M$;
\item for all $\fp \in \Spec(A) \setminus \Var(\fb)$, the sequence $a_1/1 ,\ldots, a_n/1$ of natural images in $A_{\fp}$ is a poor $M_{\fp}$-sequence;
\item for all $i = 1, \ldots, n$, the $A$-module $\left(\sum_{j=1}^{i-1} a_jM:_Ma_i\right)\Big/\left(\sum_{j=1}^{i-1} a_jM\right)$ is $\fb$-torsion.
\end{enumerate}
\end{lem}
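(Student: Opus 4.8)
The plan is to deduce everything from the single observation, already recorded just before Definition \ref{defilt.1}, that an $A$-module $N$ is $\fb$-torsion if and only if $\Supp(N)\subseteq\Var(\fb)$. Throughout, write
\[
N_i:=\Big(\textstyle\sum_{j=1}^{i-1}a_jM:_Ma_i\Big)\Big/\Big(\textstyle\sum_{j=1}^{i-1}a_jM\Big)\qquad(i=1,\ldots,n),
\]
so that condition (i) reads ``$\Supp(N_i)\subseteq\Var(\fb)$ for all $i$'' and condition (iii) reads ``$N_i$ is $\fb$-torsion for all $i$''. Hence the equivalence of (i) and (iii) is immediate from that observation applied to each $N_i$.

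For (i) $\Leftrightarrow$ (ii) I would localize. Fix $\fp\in\Spec(A)\setminus\Var(\fb)$ and $i\in\{1,\ldots,n\}$. Since localization is exact, it commutes with finite sums and images of submodules, and also with the colon $(\,\cdot\,:_M a_i)=\ker(M/\,\cdot\,\xrightarrow{a_i}M/\,\cdot\,)$ against the single element $a_i$; thus
\[
\Big(\textstyle\sum_{j<i}a_jM\Big)_{\fp}=\textstyle\sum_{j<i}(a_j/1)M_{\fp},\qquad\Big(\textstyle\sum_{j<i}a_jM:_Ma_i\Big)_{\fp}=\Big(\textstyle\sum_{j<i}(a_j/1)M_{\fp}:_{M_{\fp}}a_i/1\Big),
\]
and therefore $(N_i)_{\fp}$ is isomorphic to $\big(\sum_{j<i}(a_j/1)M_{\fp}:_{M_{\fp}}a_i/1\big)\big/\big(\sum_{j<i}(a_j/1)M_{\fp}\big)$. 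Now $a_i/1$ is a non-zero-divisor on $M_{\fp}/\sum_{j<i}(a_j/1)M_{\fp}$ exactly when this quotient vanishes, i.e.\ exactly when $(N_i)_{\fp}=0$, i.e.\ exactly when $\fp\notin\Supp(N_i)$. Letting $i$ range over $1,\ldots,n$ shows that $a_1/1,\ldots,a_n/1$ is a poor $M_{\fp}$-sequence if and only if $\fp\notin\bigcup_{i=1}^n\Supp(N_i)$; letting $\fp$ range over $\Spec(A)\setminus\Var(\fb)$ then shows that (ii) is equivalent to $\bigcup_{i=1}^n\Supp(N_i)\subseteq\Var(\fb)$, which is (i).

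There is essentially no obstacle here. The only point that warrants a word of justification is that localization commutes with the relevant colon modules, and that holds simply because $(N:_M a_i)$ is the kernel of multiplication by $a_i$ on $M/N$ and localization is exact; in particular no finiteness hypothesis on $M$ (and, in fact, not even the standing Noetherian hypothesis on $A$) is needed for this lemma. Everything else is the bookkeeping displayed above, which is why the argument is left to the reader.
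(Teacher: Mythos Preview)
Your argument is correct and is precisely the elementary verification the paper has in mind; indeed the paper does not supply a proof at all but explicitly leaves it to the reader, and your reduction of (i)$\Leftrightarrow$(iii) to the stated support--torsion characterisation together with the localisation computation for (i)$\Leftrightarrow$(ii) is the expected route. Your remark that neither finite generation of $M$ nor even the Noetherian hypothesis on $A$ is needed is also accurate and worth recording.
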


\begin{rmk}
\label{defilt.2a} Use the notation of \ref{defilt.2}, which could help the reader to verify the following.
\begin{enumerate}
 \item If $a_1 ,\ldots, a_n$ form a $\fb$-filter-regular sequence on $M$, then $a_1^{\alpha_1} ,\ldots, a_n^{\alpha_n}$ is a $\fb$-filter-regular sequence on $M$ for all choices of $\alpha_1, \ldots, \alpha_n \in \N$.
\item If $(\Lambda, \leq)$ is a
directed partially or\-dered set, and
$(W_{\alpha})_{\alpha \in \Lambda}$ is a direct system of
$A$-modules over $\Lambda$ with direct limit $W_{\infty}$, and if $a_1 ,\ldots, a_n$ is a $\fb$-filter-regular sequence on $W_{\alpha}$ for each $\alpha \in \Lambda$, then the $a_1 ,\ldots, a_n$ form a $\fb$-filter-regular sequence on $W_{\infty}$.
\end{enumerate}
\end{rmk}

A straightforward prime-avoidance argument will prove the following lemma.

\begin{lem}
\label{defilt.3} {\rm (See Khashyarmanesh--Salarian--Zakeri \cite[p.\ 39]{KhaSalZak98}.)} Assume that $M$ is non-zero and finitely generated. Suppose that $a_1 ,\ldots, a_n$ is a $\fb$-filter-regular sequence on $M$ composed of elements of $\fb$.  Then there exists $a_{n+1} \in \fb$ such that
$a_1 ,\ldots, a_n,a_{n+1}$ is a $\fb$-filter-regular sequence on $M$.

It follows that there exists an infinite $\fb$-filter-regular sequence on $M$ composed of elements of $\fb$.
\end{lem}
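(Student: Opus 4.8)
The plan is to reduce the extension step to a single application of prime avoidance, using the characterisation of $\fb$-filter-regular sequences supplied by Lemma \ref{defilt.2}.

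First I would note that, by Lemma \ref{defilt.2}(iii), the sequence $a_1, \ldots, a_n, a_{n+1}$ is a $\fb$-filter-regular sequence on $M$ precisely when $a_1, \ldots, a_n$ is one --- which holds by hypothesis --- and, in addition, writing $M' := M\big/\sum_{j=1}^n a_jM$, the $A$-module $\big(0 :_{M'} a_{n+1}\big)$ is $\fb$-torsion. So it suffices to produce $a_{n+1} \in \fb$ with $\Supp\big(0 :_{M'} a_{n+1}\big) \subseteq \Var(\fb)$.

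Next, since $A$ is Noetherian and $M$ is finitely generated, $M'$ is finitely generated and $\Ass(M')$ is finite. Let $\fp_1, \ldots, \fp_k$ be those members of $\Ass(M')$ that do not lie in $\Var(\fb)$, so that $\fb \not\subseteq \fp_i$ for each $i$. By prime avoidance, $\fb \not\subseteq \fp_1 \cup \cdots \cup \fp_k$, and we may pick $a_{n+1} \in \fb \setminus (\fp_1 \cup \cdots \cup \fp_k)$ (and $a_{n+1} \in \fb$ arbitrary if $k = 0$). To see this choice works, I would argue as follows: if $\big(0 :_{M'} a_{n+1}\big) = 0$ there is nothing to check; otherwise each $\fq \in \Ass\big(0 :_{M'} a_{n+1}\big)$ lies in $\Ass(M')$ (as $\big(0 :_{M'} a_{n+1}\big)$ is a submodule of $M'$) and contains $\ann\big(0 :_{M'} a_{n+1}\big) \ni a_{n+1}$; since $a_{n+1}$ avoids every $\fp_i$, no such $\fq$ equals any $\fp_i$, hence every such $\fq$ lies in $\Var(\fb)$, and therefore $\Supp\big(0 :_{M'} a_{n+1}\big) = \bigcup_{\fq}\Var(\fq) \subseteq \Var(\fb)$. (Equivalently, one could verify the condition after localising at each prime not containing $\fb$, via Lemma \ref{defilt.2}(ii).) For the final assertion, I would start with the empty sequence, which is a $\fb$-filter-regular sequence on $M$ of length $0$, and apply the first part repeatedly --- legitimate at every stage because $M$ is non-zero --- obtaining an infinite sequence $(a_i)_{i \in \N}$ with all terms in $\fb$ that is, by definition, a $\fb$-filter-regular sequence on $M$.

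The argument is routine; the only point needing a little care is the verification that the chosen $a_{n+1}$ works --- specifically, recognising that one need only avoid the associated primes of $M\big/\sum_{j=1}^n a_jM$ that fail to contain $\fb$, rather than all of them.
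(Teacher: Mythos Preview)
Your argument is correct and is exactly the ``straightforward prime-avoidance argument'' the paper alludes to; the paper gives no further details beyond that phrase, so your write-up simply fills in what the authors left to the reader.
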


A triangular subset $U$ of $A^n$ is {\em expanded\/} if, whenever $(a_1, \ldots, a_n) \in U$, then $(a_1, \ldots, a_i,1,\ldots,1) \in U$ for all $i = 0, \ldots,n-1$. When $n > t \geq 1$, the {\em restriction of $U$ to $A^t$\/} is
\[
\{ (b_1, \ldots, b_t) \in A^t : \mbox{there exist $b_{t+1}, \ldots, b_n \in A$ such that $(b_1, \ldots, b_t,b_{t+1}, \ldots, b_n) \in U$} \}.
\]

\begin{ntn}
\label{defilt.4} Let $\mathcal{V} = (V_i)_{i\in\N}$ be a chain of expanded triangular subsets on $A$ in the sense of \cite[p.\ 420]{O'Car83}. Thus
$V_i$ is an expanded triangular subset of $A^i$ for all $i \in \N$, and $V_i$ is the restriction of $V_{i+1}$ to $A^i$ for all $i \in \N$.

We can form the associated complex of modules of generalized fractions
\[
0 \lra M \stackrel{d^0}{\lra}V_1^{-1}M \stackrel{d^1}{\lra} \cdots \lra V_i^{-i}M \stackrel{d^i}{\lra} V_{i+1}^{-(i+1)}M \lra \cdots,
\]
which we denote by $C\big({\mathcal V},M\big)$. Here, $d^0(m) = m/(1)$ for all $m \in M$, and, for $i \in \N$,
\[
d^i \Big(\frac{m}{(v_1, \ldots, v_i)}\Big) = \frac{m}{(v_1, \ldots, v_i,1)} \quad \mbox{for all~} m \in M,(v_1, \ldots, v_i)\in V_i.
\]
\end{ntn}

Part (i) of the next proposition is an easy consequence of \cite[Proposition (2.1)]{42} and \ref{defilt.2}.

\begin{thm}\label{defilt.5} {\rm (See Khashyarmanesh--Salarian--Zakeri \cite[Theorems 1.1, 1.2]{KhaSalZak98}.)}
 Let the situation and notation be as in\/ {\rm \ref{defilt.4}}.
 \begin{enumerate}
 \item Each member of each $V_i~(i\in\N)$ is a $\fb$-filter-regular sequence on $M$ if and only if all the cohomology modules $H^i(C({\mathcal V},M))~(i\in\nn)$ of the complex $C({\mathcal V},M)$ are $\fb$-torsion.
\item Assume that the conditions in\/ {\rm (i)} are satisfied, and also that $H^j_{\fb}(V_i^{-i}M) = 0$ for all $j\in \nn$ and $i \in \N$. Then
$H^i(C({\mathcal V},M)) \cong H^i_{\fb}(M)$ for all $i\in\nn$.
\end{enumerate}
\end{thm}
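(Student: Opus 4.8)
The plan is to prove the two parts by rather different routes: part (i) by a localisation argument that reduces everything to a known exactness criterion for complexes of generalized fractions, and part (ii) by computing the derived $\fb$-torsion of the complex $C({\mathcal V},M)$ in two ways and comparing the answers. Throughout I write $C^{\bullet}:=C({\mathcal V},M)$, so that $C^0=M$ and $C^i=V_i^{-i}M$ for $i\geq 1$, and for a bounded-below complex $D^{\bullet}$ of $A$-modules I let $\mathbb{H}^i_{\fb}(D^{\bullet})$ denote its $i$th hypercohomology with respect to the left exact functor $\Gamma_{\fb}$.

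For part (i): an $A$-module is $\fb$-torsion exactly when its localisation at every prime $\fp\in\Spec(A)\setminus\Var(\fb)$ is zero. Modules of generalized fractions commute with localisation, so for such a $\fp$ there is a natural isomorphism $(V_i^{-i}M)_{\fp}\cong (V_{i,\fp})^{-i}(M_{\fp})$, where $V_{i,\fp}\subseteq A_{\fp}^{\,i}$ is the triangular subset of natural images of the members of $V_i$; the family ${\mathcal V}_{\fp}:=(V_{i,\fp})_{i\in\N}$ is again a chain of expanded triangular subsets on $A_{\fp}$, and these isomorphisms are compatible with the differentials, so the localisation of $C^{\bullet}$ at $\fp$ is $C({\mathcal V}_{\fp},M_{\fp})$. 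Hence all $H^i(C^{\bullet})$ are $\fb$-torsion if and only if $H^i(C({\mathcal V}_{\fp},M_{\fp}))=0$ for all $i$ and all $\fp\notin\Var(\fb)$; by \cite[Proposition (2.1)]{42} over $A_{\fp}$ this is equivalent to every member of every $V_{i,\fp}$ being a poor $M_{\fp}$-sequence, which by Lemma \ref{defilt.2}(ii) is exactly the statement that every member of every $V_i$ is a $\fb$-filter-regular sequence on $M$. This proves (i) in both directions at once.

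For part (ii): let $C^{\bullet}_{\geq 1}$ be the subcomplex of $C^{\bullet}$ which is $0$ in degree $0$ and agrees with $C^{\bullet}$ in all degrees $\geq 1$; there is then a short exact sequence of complexes $0\to C^{\bullet}_{\geq 1}\to C^{\bullet}\to \overline{M}\to 0$, where $\overline{M}$ is $M$ concentrated in degree $0$. The extra hypothesis of (ii) says precisely that $H^j_{\fb}(C^i)=0$ for all $j\in\nn$ and all $i\geq 1$, that is, that every term of $C^{\bullet}_{\geq 1}$ has vanishing local cohomology in every degree; feeding this into the hyperderived-functor spectral sequence with $E_1$-page $H^q_{\fb}(C^p)$ and abutment $\mathbb{H}^{p+q}_{\fb}(C^{\bullet}_{\geq 1})$ (which converges since $C^{\bullet}_{\geq 1}$ is bounded below) shows $\mathbb{H}^i_{\fb}(C^{\bullet}_{\geq 1})=0$ for all $i$. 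The long exact sequence of hypercohomology attached to the short exact sequence of complexes above then gives $\mathbb{H}^i_{\fb}(C^{\bullet})\cong \mathbb{H}^i_{\fb}(\overline{M})=H^i_{\fb}(M)$ for every $i$. On the other hand, part (i) tells us every cohomology module $H^q(C^{\bullet})$ is $\fb$-torsion, so in the other hyperderived-functor spectral sequence, whose $E_2$-page is $H^p_{\fb}(H^q(C^{\bullet}))$ and whose abutment is again $\mathbb{H}^{p+q}_{\fb}(C^{\bullet})$, every entry with $p\geq 1$ vanishes and one reads off $\mathbb{H}^i_{\fb}(C^{\bullet})\cong H^i(C^{\bullet})$. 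Comparing the two calculations of $\mathbb{H}^i_{\fb}(C^{\bullet})$ yields $H^i(C({\mathcal V},M))\cong H^i_{\fb}(M)$ for all $i\in\nn$, as required.

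The step I expect to cost the most care is the localisation reduction in part (i): one must check carefully that the localisation of a chain of expanded triangular subsets is again such a chain and that the isomorphisms $(V_i^{-i}M)_{\fp}\cong (V_{i,\fp})^{-i}(M_{\fp})$ are chain maps for the differentials $d^i$, so that \cite[Proposition (2.1)]{42} may be applied term by term over $A_{\fp}$. In part (ii) the one genuinely delicate point is that the vanishing-local-cohomology hypothesis is available for the terms $V_i^{-i}M$ with $i\geq 1$ but not for the degree-$0$ term $M$, which is exactly why the argument is built around the truncated complex $C^{\bullet}_{\geq 1}$ and the displayed short exact sequence of complexes; the remaining homological manipulations are routine. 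Anyone preferring to avoid hyperderived functors can replace the spectral-sequence computation in (ii) by splitting $C^{\bullet}$ into the short exact sequences $0\to Z^i\to C^i\to B^{i+1}\to 0$ and $0\to B^i\to Z^i\to H^i(C^{\bullet})\to 0$ (with $Z^i=\Ker d^i$, $B^i=\Ima d^{i-1}$ and $B^0=0$) and chasing the long exact sequences of ordinary local cohomology; a dimension-shifting argument through the chain $H^i_{\fb}(M),\,H^i_{\fb}(B^1),\,H^i_{\fb}(Z^1),\,H^{i-1}_{\fb}(B^2),\,\dots,\,H^1_{\fb}(B^i),\,H^i(C^{\bullet})$ then gives the same conclusion, with $i=0$ handled directly from $0\to \Ker d^0\to M\to B^1\to 0$.
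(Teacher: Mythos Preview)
Your proof is correct. For part (i) you take essentially the same route as the paper: localise at $\fp\notin\Var(\fb)$, invoke \cite[Proposition (2.1)]{42} to characterise exactness of the complex of generalized fractions over $A_{\fp}$, and translate back via Lemma \ref{defilt.2}. The paper phrases this more tersely, appealing to \cite[(2.1)]{42} directly as a support statement, but the content is identical.

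For part (ii) the approaches diverge. The paper does not argue at all: it simply cites \cite[Theorem 1.2]{KhaSalZak98} and remarks that the proof of \cite[Theorem 2.4]{42} (originally stated for finitely generated modules over a local ring) goes through unchanged here. You instead give a self-contained hypercohomology argument, computing $\mathbb{H}^i_{\fb}(C^{\bullet})$ two ways via the two standard spectral sequences and the short exact sequence $0\to C^{\bullet}_{\geq 1}\to C^{\bullet}\to \overline{M}\to 0$. This is a clean and genuinely different proof: it makes transparent exactly where each hypothesis enters (the $\fb$-torsion of cohomology collapses the second spectral sequence; the vanishing $H^j_{\fb}(V_i^{-i}M)=0$ kills $\mathbb{H}^*_{\fb}(C^{\bullet}_{\geq 1})$), and it requires no external reference beyond standard homological algebra. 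The price is that a reader must be comfortable with hypercohomology spectral sequences, whereas the argument in \cite{42} that the paper relies on is presumably more elementary dimension-shifting of the sort you sketch in your final paragraph. Both spectral sequences are first-quadrant since $C^{\bullet}$ is bounded below, so convergence is automatic and your argument is sound as written.
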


\begin{proof}[Proof.] (i) By \ref{defilt.2}, each member of each $V_i$ is a $\fb$-filter-regular sequence on $M$ if and only if, for each $i \in \N$, for each $\fp \in \Spec(A) \setminus \Var(\fb)$, and for each $(a_1, \ldots, a_i) \in V_i$, the sequence $a_1/1 ,\ldots, a_i/1$ of natural images in $A_{\fp}$ is a poor $M_{\fp}$-sequence. By \cite[(2.1)]{42}, this is the case if and only if all the cohomology modules of the complex $C({\mathcal V},M)$ have support contained in $\Var(\fb)$, that is, are $\fb$-torsion.

(ii) This is Theorem 1.2 of Khashyarmanesh--Salarian--Zakeri \cite{KhaSalZak98}. Their proof is a statement that one can use the arguments in the proof of \cite[Theorem 2.4]{42}. In that theorem, the underlying ring is local and the module $M$ is finitely generated. The argument there works in our more general situation here. We point out that Khashyarmanesh, Salarian and Zakeri do not assume that $M$ is finitely generated in their Theorems 1.1 and 1.2 of \cite{KhaSalZak98}.
\end{proof}

\begin{note} It is important to note that $M$ is not assumed to be finitely generated in \ref{defilt.5}.  In the corollary below, we generalize \cite[Consequences 1.3(i)]{KhaSalZak98} to the case of an arbitrary, not necessarily finitely generated, $A$-module $M$.
\end{note}

\begin{cor}
\label{defilt.6} Let $\mathbf{f} = (f_1, \ldots, f_t, \ldots)$ be an infinite $\fb$-filter-regular sequence on $M$ composed of elements of $\fb$.  For each $i \in \N$, set $U_i := U_{(f_1, \ldots, f_i)} = \{ (f_1^{\alpha_1}, \ldots, f_i^{\alpha_i}) : \alpha_1, \ldots, \alpha_i \in \nn\},$ an expanded triangular subset of $A^i$. Also set ${\mathcal U}_{\mathbf{f}} = (U_i)_{i\in\N}$, a chain of expanded triangular subsets on $A$. We can form the complex $C({\mathcal U}_{\mathbf{f}},M)$.

There are $A$-isomorphisms $H^i(C({\mathcal U}_{\mathbf{f}},M)) \cong H^i_{\fb}(M)$ for all $i \in \nn$.
\end{cor}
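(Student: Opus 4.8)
The plan is to deduce this from Theorem~\ref{defilt.5}(ii), applied with $\mathcal{V} = \mathcal{U}_{\mathbf{f}}$, $M$ and $\fb$. Two things must be checked: \textbf{(a)} every member of every $U_i$ is a $\fb$-filter-regular sequence on $M$; and \textbf{(b)} $H^j_{\fb}(U_i^{-i}M) = 0$ for all $j \in \nn$ and all $i \in \N$. Once both are in hand, Theorem~\ref{defilt.5}(ii) yields $H^i(C(\mathcal{U}_{\mathbf{f}},M)) \cong H^i_{\fb}(M)$ for all $i \in \nn$, which is the assertion.

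Step \textbf{(a)} is routine. Since $\mathbf{f}$ is an infinite $\fb$-filter-regular sequence on $M$, its initial segment $f_1, \ldots, f_i$ is a $\fb$-filter-regular sequence on $M$ for each $i$, and hence so is $f_1^{\alpha_1}, \ldots, f_i^{\alpha_i}$ for all $\alpha_1, \ldots, \alpha_i \in \N$, by Remark~\ref{defilt.2a}(i). A general member of $U_i$ has the shape $(f_1^{\alpha_1}, \ldots, f_i^{\alpha_i})$ with $\alpha_1, \ldots, \alpha_i \in \nn$; if some exponent is $0$, let $k_0$ be least with $\alpha_{k_0} = 0$. In Definition~\ref{defilt.1}, the colon-quotient at position $k_0$ vanishes because the $k_0$th entry is $1$; the colon-quotients at positions $> k_0$ vanish because the relevant partial sum of submodules of $M$ already equals $M$; and the positions $< k_0$ involve only $f_1^{\alpha_1}, \ldots, f_{k_0-1}^{\alpha_{k_0-1}}$ with positive exponents, so are covered by the previous sentence. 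Thus every member of $U_i$ is a $\fb$-filter-regular sequence on $M$.

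The substance is step \textbf{(b)}, which I would approach through the structure of $U_i^{-i}M$ developed in Section~\ref{gf}. By Lemma~\ref{calc.6} and the discussion in Notation~\ref{gf.101} (carried out over $A$), $U_i^{-i}M \cong U_i^{-i}A \otimes_A M$ is the direct limit of the system $\big(M/\fh_{\beta}M\big)_{\beta \in \N}$, where $\fh_{\beta}$ is the annihilator of the generalized fraction $1/(f_1^{\beta}, \ldots, f_i^{\beta})$ in $U_i^{-i}A$ (computed by Lemma~\ref{gf.102}), the transition map $M/\fh_{\beta}M \to M/\fh_{\beta+1}M$ being induced by multiplication by $g := f_1 \cdots f_i$; one has $\fh_{\beta} \supseteq (f_1^{\beta}, \ldots, f_{i-1}^{\beta})A$ by \ref{tri}(v), and the chain $(\fh_{\beta})_{\beta}$ is descending, with $\fh_{\beta} = (\fh_{\beta+1} : g)$. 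One route is then: local cohomology commutes with direct limits, so $H^j_{\fb}(U_i^{-i}M) = \varinjlim_{\beta} H^j_{\fb}(M/\fh_{\beta}M)$; each $H^j_{\fb}(M/\fh_{\beta}M)$ is $\fb$-torsion, and since all the $f_k$ lie in $\fb$ the element $g \in \fb^i$ acts locally nilpotently on it; composing the natural surjection $M/\fh_{\beta+1}M \twoheadrightarrow M/\fh_{\beta}M$ with the transition map recovers multiplication by $g$, and one tries to push this through to show that every element of the direct limit is eventually annihilated. A cleaner route is to show that multiplication by $f_i$ is an automorphism of $U_i^{-i}M$: surjectivity is immediate from the identity $m/(f_1^{\beta}, \ldots, f_i^{\beta}) = f_i\big((f_1\cdots f_{i-1})\,m/(f_1^{\beta+1}, \ldots, f_i^{\beta+1})\big)$, while injectivity should follow from the hypothesis that $f_1, \ldots, f_i$ is a $\fb$-filter-regular sequence on $M$ (this is the one place where that hypothesis, rather than the mere membership $f_k \in \fb$, is needed); then $U_i^{-i}M$ is a module over the ring of fractions $A_{f_i}$, and since $f_i \in \fb$ forces $\fb A_{f_i} = A_{f_i}$ one gets $H^j_{\fb}(U_i^{-i}M) = H^j_{\fb A_{f_i}}(U_i^{-i}M) = 0$ for every $j$.

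I expect step \textbf{(b)} to be the main obstacle --- and within it, either the bookkeeping of the transition maps in the direct limit (the indexing modules $M/\fh_{\beta}M$ genuinely change with $\beta$, since the ideals $\fh_{\beta}$ form a descending rather than ascending chain), or, equivalently, the injectivity of multiplication by $f_i$ on $U_i^{-i}M$. Everything else is routine manipulation of generalized fractions and of filter-regular sequences, and once \textbf{(a)} and \textbf{(b)} are in place the corollary is a direct application of Theorem~\ref{defilt.5}(ii).
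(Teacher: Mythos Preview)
Your overall strategy is exactly the paper's: verify the two hypotheses of Theorem~\ref{defilt.5}(ii) and conclude. Step \textbf{(a)} is fine and in fact more carefully argued than in the paper, which simply invokes Remark~\ref{defilt.2a}(i) and notes parenthetically that members with some components equal to $1$ cause no trouble.

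For step \textbf{(b)} the paper takes a shortcut you did not: it introduces $V_i := \{(f_1^{\alpha_1},\ldots,f_i^{\alpha_i}) : \alpha_j \in \N\}$ (strictly positive exponents), observes that the natural map $V_i^{-i}M \to U_i^{-i}M$ is an isomorphism, and then quotes \cite[2.2]{34} for the vanishing $H^j_{\fb}(V_i^{-i}M)=0$. So the paper outsources precisely the point you identified as the obstacle.

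Your ``cleaner route'' is actually complete, and---contrary to your parenthetical---does \emph{not} require filter-regularity. Surjectivity of multiplication by $f_i$ you have. For injectivity, suppose $f_im/(f_1^\beta,\ldots,f_i^\beta)=0$. A Two-Routes argument as in Lemma~\ref{gf.102} shows this is equivalent to the existence of $\gamma\geq\beta$ with $(f_1\cdots f_i)^{\gamma-\beta}f_im\in\sum_{j=1}^{i-1}f_j^{\gamma}M$. Multiply both sides by $f_1\cdots f_{i-1}$: since each $f_j$ with $j\leq i-1$ occurs in this product, $(f_1\cdots f_{i-1})f_j^{\gamma}M\subseteq f_j^{\gamma+1}M$, and the left side becomes $(f_1\cdots f_i)^{(\gamma+1)-\beta}m$. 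Hence $(f_1\cdots f_i)^{(\gamma+1)-\beta}m\in\sum_{j=1}^{i-1}f_j^{\gamma+1}M$, which says $m/(f_1^\beta,\ldots,f_i^\beta)=0$. Thus $U_i^{-i}M$ is an $A_{f_i}$-module and $H^j_{\fb}(U_i^{-i}M)=0$ as you wanted, using only $f_i\in\fb$. Your first route via direct limits is messier and, as you suspected, the bookkeeping with the changing quotients $M/\fh_\beta M$ does not obviously collapse; the bijectivity argument is the right one.
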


\begin{note}
Of course, for $i$ sufficiently large, for example for $i$ greater than the arithmetic rank of $\fb$, the local cohomology module $H^i_{\fb}(M)$ will be zero.
\end{note}

\begin{proof}[Proof.] Let $i\in\N$, and set $V_i := \{ (f_1^{\alpha_1}, \ldots, f_i^{\alpha_i}) : \alpha_1, \ldots, \alpha_i \in \N\}$, a triangular subset of $A^i$. Now $V_i \subset U_i$ and the natural homomorphism $V_i^{-i}M \lra U_i^{-i}M$ is an isomorphism. Therefore we have $H^j_{\fb}(U_i^{-i}M) \cong H^j_{\fb}(V_i^{-i}M)$ for all $j\in \nn$, and this is zero by \cite[2.2]{34}. Every member of $U_i$ (including those with some components equal to $1$) is a $\fb$-filter-regular sequence on $M$ (by \ref{defilt.2a}(i)). Therefore, by the Theorem \ref{defilt.5} of Khashyarmanesh--Salarian--Zakeri, there are $A$-isomorphisms $H^i(C({\mathcal U}_{\mathbf{f}},M)) \cong H^i_{\fb}(M)$ for all $i\in \nn$.
\end{proof}

Our remaining results in this section concern modules over $R$: see \ref{GLF.1}.

\begin{thm}
\label{alg.3} Over $R$, let ${\mathcal M}$ be an $F$-finite $F$-module with root $N$. By\/ {\rm \ref{defilt.3}}, there exists an infinite $\fa$-filter-regular sequence $\mathbf{g} := (g_i)_{i\in\N}$ on $N$ composed of elements of $\fa$. Then $(g_i)_{i\in\N}$ is an $\fa$-filter-regular sequence on ${\mathcal M}$.
\end{thm}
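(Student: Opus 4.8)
The plan is to realise $\mathcal{M}$ as a direct limit of finitely generated modules on which $\mathbf{g}$ is already known to be $\fa$-filter-regular, and to invoke \ref{defilt.2a}(ii). Fix a root morphism $\beta \colon N \lra F(N)$ presenting $\mathcal{M}$ as $\varinjlim_{i}\big(F^i(N)\big)$ in the manner of \ref{GLF.1}; since $R$ is regular, $F$ is exact, so each transition map $F^i(\beta)$ is injective and each $F^i(N)$ is a finitely generated $R$-module (apply the right exact functor $F$ to a finite presentation of $N$). By \ref{defilt.2a}(ii) it therefore suffices to prove, for every $t \in \N$ and every $i \in \nn$, that $g_1, \ldots, g_t$ is an $\fa$-filter-regular sequence on $F^i(N)$. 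We do this by induction on $i$, the case $i = 0$ being the hypothesis; the inductive step reduces at once to the assertion: \emph{if $g_1, \ldots, g_t$ is an $\fa$-filter-regular sequence on a finitely generated $R$-module $W$, then it is one on $F(W)$.}

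To prove this assertion, first record three elementary facts about $F$. (a) For $r \in R$, applying $F$ to multiplication by $r$ on $W$ yields multiplication by $r^p$ on $F(W)$, since in $R' \otimes_R W$ one has $s' \otimes rw = s'r^p \otimes w = r^p(s' \otimes w)$. (b) Hence, for an ideal $I = (g_1, \ldots, g_k)R$, applying the exact functor $F$ to $0 \lra IW \lra W \lra W/IW \lra 0$ identifies $F(W/IW)$ with $F(W)/I^{[p]}F(W)$, where $I^{[p]} = (g_1^p, \ldots, g_k^p)R$. (c) If $T$ is $\fa$-torsion and finitely generated, then $F(T)$ is $\fa$-torsion: by (a), the $p$th power of every element of $\ann T$ lies in $\ann F(T)$, so $\Supp(F(T)) \subseteq \Supp(T) \subseteq \Var(\fa)$. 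Now apply the exact functor $F$ to the short exact sequences witnessing, for each $i = 1, \ldots, t$, that $\big(\sum_{j<i} g_j W :_W g_i\big)\big/\sum_{j<i} g_j W$ is $\fa$-torsion, and combine (a), (b), (c): one deduces that $g_1^p, \ldots, g_t^p$ is an $\fa$-filter-regular sequence on $F(W)$.

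It remains to pass from these $p$th powers back to $g_1, \ldots, g_t$ themselves. By \ref{defilt.2}(ii) this amounts to showing, for each $\fp \in \Spec(R) \setminus \Var(\fa)$, that if $g_1^p/1, \ldots, g_t^p/1$ is a poor $F(W)_\fp$-sequence then so is $g_1/1, \ldots, g_t/1$ — a local statement about poor sequences, proved by induction on $t$. If $g_1/1, \ldots, g_{t-1}/1$ is already a poor $F(W)_\fp$-sequence then, a poor sequence being a (weak) regular sequence, multiplication by $(g_1/1)^{p-1}\cdots(g_{t-1}/1)^{p-1}$ induces a monomorphism
\[
F(W)_\fp\Big/\textstyle\sum_{j<t}(g_j/1)F(W)_\fp \;\hookrightarrow\; F(W)_\fp\Big/\textstyle\sum_{j<t}(g_j^p/1)F(W)_\fp ,
\]
and since $g_t^p/1$ is a non-zero-divisor on the target it is one on the source, whence $g_t/1$ is too. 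Feeding this back through \ref{defilt.2}(ii) shows $g_1, \ldots, g_t$ is $\fa$-filter-regular on $F(W)$, which completes the induction on $i$ and the proof.

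The main obstacle is the last step: the fact that, up to being $\fa$-filter-regular, a sequence is unchanged when its entries are replaced by their $p$th powers. One inclusion is exactly \ref{defilt.2a}(i); the reverse rests on the displayed monomorphism, i.e. on the identity $\big(\sum_j c_j^p V :_V \prod_j c_j^{p-1}\big) = \sum_j c_j V$ for a poor $V$-sequence $c_1, \ldots, c_m$, which is a standard but slightly fiddly piece of colon-ideal bookkeeping for (weak) regular sequences that I would isolate beforehand as a preliminary lemma (proved by induction on $m$, reducing successive colons by one variable at a time).
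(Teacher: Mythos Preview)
Your proof is correct and follows essentially the same approach as the paper's: both reduce to showing that an $\fa$-filter-regular sequence on $W$ remains one on $F(W)$, localize at primes outside $\Var(\fa)$ to convert this into a statement about poor sequences, use exactness of the Frobenius functor to see that the $p$th powers form a poor sequence on the Frobenius twist, and then descend from $p$th powers back to the original elements before passing to the direct limit via \ref{defilt.2a}(ii). The only cosmetic differences are that the paper localizes first and works with the Frobenius functor $F'$ over $R_\fp$ (invoking \ref{alg.13} to identify $F'(N_\fp)$ with $(F(N))_\fp$), and that the paper dispatches the descent-from-$p$th-powers step by citing \cite[Exercise 12(c), p.\ 103]{IK} rather than spelling out the monomorphism argument you give.
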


\begin{proof}[Proof.] Let $\fp \in \Spec(R) \setminus \Var(\fa)$. By \ref{defilt.2}, the sequence $g_1/1 ,\ldots, g_h/1, \ldots$ of natural images in $R_{\fp}$ is a poor $N_{\fp}$-sequence. Therefore $g_1^p/1 ,\ldots, g_h^p/1, \ldots$ is a poor $F'(N_{\fp})$-sequence, where $F'$ denotes the (exact) Frobenius functor on the category of modules over the regular local ring $R_{\fp}$. It follows from this that $g_1/1 ,\ldots, g_h/1,\ldots$ is a poor $F'(N_{\fp})$-sequence: see \cite[Exercise 12(c), p.\ 103]{IK}. We can now use \ref{alg.13} to show that the sequence $g_1/1 ,\ldots, g_h/1,\ldots$ of natural images in $R_{\fp}$ is a poor $(F(N))_{\fp}$-sequence. This is true for all $\fp \in \Spec(R) \setminus \Var(\fa)$. Therefore, by \ref{defilt.2} again, $(g_i)_{i\in\N}$ is an $\fa$-filter-regular sequence on $F(N)$. It follows that $(g_i)_{i\in\N}$ is an $\fa$-filter-regular sequence on $F^j(N)$ for all $j \in \N$. Since ${\mathcal M}$ is the direct limit of a direct system with constituent $R$-modules $(F^i(N))_{i\in\nn}$, it follows from \ref{defilt.2a}(ii) that  $(g_i)_{i\in\N}$ is an $\fa$-filter-regular sequence on ${\mathcal M}$.
\end{proof}

\begin{strat}
\label{alg.10} Here we set out a strategy that can help with the analysis of an $F$-finite $F$-module ${\mathcal M}$ over $R$.
\begin{enumerate}
\item Let $N$ be a root for ${\mathcal M}$. Necessarily, $N$ is finitely generated.
\item By\/ {\rm \ref{defilt.3}}, there exists an infinite $\fa$-filter-regular sequence $\mathbf{g} := (g_i)_{i\in\N}$ on $N$ composed of elements of $\fa$.
\item By \ref{alg.3}, $(g_i)_{i\in\N}$ is an $\fa$-filter-regular sequence on ${\mathcal M}$.
\item For each $i \in\N$, set $\mathbf{g}_i := (g_1, \ldots, g_i)$ and consider $R_{\mathbf{g}_i}$, the module of generalized fractions of $R$ with respect to
$
U_{\mathbf{g}_i} := \{(g_1^{\beta_1}, \ldots, g_i^{\beta_i}) : \beta_1, \ldots, \beta_i \in \nn\}.
$
Now ${\mathcal U}_{\mathbf{g}} := (U_{\mathbf{g}_i})_{i\in\N}$ is a chain of triangular sets on $R$ and
we can form the complex of generalized fractions  $C({\mathcal U}_{\mathbf{g}},R)$, namely
\[
0 \stackrel{d^{-1}}{\lra} R \stackrel{d^0}{\lra} R_{\mathbf{g}_1}\stackrel{d^1}{\lra} \cdots \lra R_{\mathbf{g}_{h-1}} \stackrel{d^{h-1}}{\lra} R_{\mathbf{g}_h} \stackrel{d^h}{\lra} \cdots,
\]
as in \ref{defilt.6}.
\item We can also form the complex of generalized fractions $C({\mathcal U}_{\mathbf{g}},{\mathcal M})$, again as in \ref{defilt.6}; this has the form
\[
0 \stackrel{e^{-1}}{\lra} {\mathcal M} \stackrel{e^0}{\lra} {\mathcal M}_{\mathbf{g}_1}\stackrel{e^1}{\lra} \cdots \lra {\mathcal M}_{\mathbf{g}_{h-1}} \stackrel{e^{h-1}}{\lra} {\mathcal M}_{\mathbf{g}_h} \stackrel{e^h}{\lra} \cdots \mbox{;}
\] note that, by\/ {\rm \ref{calc.6}}, this is isomorphic to $C({\mathcal U}_{\mathbf{g}},R)\otimes_R{\mathcal M}$.
\item It now follows from\/ {\rm (\ref{defilt.6})} that
    $H^i(C({\mathcal U}_{\mathbf{g}},{\mathcal M})) \cong H^i_{\fa}({\mathcal M})$ for all $i \in\nn$.
    \end{enumerate}
\end{strat}

We plan to use the above strategy in our discussion of Lyubeznik numbers in the next section. However, the strategy recovers some results of Lyubeznik.

\begin{thm}
\label{alg.4} {\rm (Lyubeznik \cite[Proposition 2.10]{Lyube97}.)} Let ${\mathcal M}$ be an $F$-finite $F$-module over $R$. Then the local cohomology module $H^i_{\fa}({\mathcal M})$ is an $F$-finite $F$-module, for all $i \in \nn$.
\end{thm}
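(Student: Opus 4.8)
The plan is to run Strategy~\ref{alg.10} and then use the fact that the cohomology of a complex of $F$-finite $F$-modules whose differentials are $F$-module homomorphisms is again $F$-finite. First I would choose a root $N$ of ${\mathcal M}$ (necessarily finitely generated) and, by~\ref{defilt.3}, an infinite $\fa$-filter-regular sequence ${\mathbf g} = (g_i)_{i\in\N}$ on $N$ composed of elements of $\fa$; by~\ref{alg.3} this ${\mathbf g}$ is also an $\fa$-filter-regular sequence on ${\mathcal M}$. With ${\mathcal U}_{\mathbf g} = (U_{{\mathbf g}_i})_{i\in\N}$ I then form the complex of generalized fractions $C({\mathcal U}_{\mathbf g},{\mathcal M})$ of \ref{alg.10}(5), whose cohomology modules are, by step~(6) there (which rests on Corollary~\ref{defilt.6}), isomorphic to $H^i_{\fa}({\mathcal M})$ for every $i\in\nn$. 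By Corollary~\ref{add.5} each term ${\mathcal M}_{{\mathbf g}_i} = U_{{\mathbf g}_i}^{-i}{\mathcal M}$ of this complex is an $F$-finite $F$-module, with structure isomorphism $\kappa_i$ the one of Theorem~\ref{mm.1g}, and ${\mathcal M}$ itself is $F$-finite with structure isomorphism $\nu$.

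The second step, which I expect to be the only one requiring real care, is to verify that the differentials $e^i : {\mathcal M}_{{\mathbf g}_i} \lra {\mathcal M}_{{\mathbf g}_{i+1}}$ (and $e^0 : {\mathcal M} \lra {\mathcal M}_{{\mathbf g}_1}$), given by $m/(v_1,\ldots,v_i) \mapsto m/(v_1,\ldots,v_i,1)$, are morphisms of $F$-modules, i.e.\ that $\kappa_{i+1}\circ e^i = F(e^i)\circ \kappa_i$ (with $\kappa_0 := \nu$). This should follow immediately from the explicit formula in~\ref{mm.1g}: writing a general element as $m/(v_1,\ldots,v_i)$ with $m = \nu^{-1}\big(\sum_k r_k'\otimes m_k\big)$, one side evaluates to $\sum_k v_1^{p-1}\cdots v_i^{p-1}\,1^{p-1}\,r_k'\otimes m_k/(v_1,\ldots,v_i,1)$ and the other to the same sum without the harmless factor $1^{p-1}$. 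Thus $C({\mathcal U}_{\mathbf g},{\mathcal M})$ is a complex in the category of $F$-finite $F$-modules over $R$ with $F$-module differentials.

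Finally I would appeal to Lyubeznik's theorem that the $F$-finite $F$-modules over $R$ form an abelian subcategory of the category of $F$-modules --- in particular that kernels and cokernels of $F$-module homomorphisms between $F$-finite $F$-modules are again $F$-finite $F$-modules \cite[Theorem 2.8]{Lyube97} --- to conclude that each $H^i(C({\mathcal U}_{\mathbf g},{\mathcal M}))$, being a subquotient of the $F$-finite $F$-module ${\mathcal M}_{{\mathbf g}_i}$ arising as $\Ker e^i / \Ima e^{i-1}$, is an $F$-finite $F$-module, and hence so is $H^i_{\fa}({\mathcal M})$. The only genuine obstacle is the compatibility check of the previous paragraph; everything else is a matter of stringing together \ref{add.5}, \ref{alg.3}, \ref{defilt.6} and \ref{mm.1g} with a single citation to Lyubeznik for the abelian-category fact.
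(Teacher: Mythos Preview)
Your proposal is correct and follows essentially the same route as the paper's proof: run Strategy~\ref{alg.10} to realise $H^i_{\fa}({\mathcal M})$ as the $i$th cohomology of the complex $C({\mathcal U}_{\mathbf g},{\mathcal M})$, use the explicit formula of Theorem~\ref{mm.1g} to verify that the differentials $e^i$ commute with the structure isomorphisms (exactly the check $\kappa_{i+1}\circ e^i = F(e^i)\circ\kappa_i$ you carry out), and then invoke Lyubeznik's results that $F$-finite $F$-modules form a full abelian subcategory. The only cosmetic difference is that the paper splits the final citation into two pieces, first using \cite[Proposition 2.5(b)]{Lyube97} to see that $\Ker e^i$ and $\Ima e^{i-1}$ are $F$-finite as $F$-submodules of the $F$-finite ${\mathcal M}_{{\mathbf g}_i}$, and then \cite[Theorem 2.8]{Lyube97} for the quotient, whereas you bundle both into a single appeal to the abelian-subcategory statement.
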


\begin{proof} Use Strategy \ref{alg.10}, and the notation thereof. The conclusion is that
    $H^i(C({\mathcal U}_{\mathbf{g}},{\mathcal M})) \cong H^i_{\fa}({\mathcal M})$ for all $i \in\nn$.

Let $\nu : \mathcal{M} \stackrel{\cong}{\lra} F(\mathcal{M}) = R'\otimes_R\mathcal{M}$ be the structural isomorphism. It follows from \ref{1006}(i) that ${\mathcal M}_{\mathbf f}$ is an $F$-module for all ${\mathbf f} = (f_1, \ldots, f_n) \in R^n$; the details given in the statement of \ref{mm.1g} show that, for each $i \in \N$, a structural isomorphism for ${\mathcal M}_{\mathbf g_i}$ is
$\theta_i : \mathcal{M}_{{\mathbf g}_i} \stackrel{\cong}{\lra} F(\mathcal{M}_{{\mathbf g}_i})$, where
\[
\theta_i\left( \frac{\nu^{-1}\left(\sum_{k=1}^w r_k' \otimes m_k\right)}{(g_1^{j_1}, \ldots,g_i^{j_i})} \right) = \sum_{k=1}^w g_1^{j_1(p-1)}\ldots g_i^{j_i(p-1)}r_k' \otimes \frac{m_k}{(g_1^{j_1}, \ldots,g_i^{j_n})}
\]
for all $j_1, \ldots, j_i \in \nn$, $w \in \N$, $r_1', \ldots, r'_w \in R$ and $m_1, \ldots, m_w \in \mathcal{M}$. It is routine to check that $\theta_1 \circ e^0 = F(e^0) \circ \nu$ and $\theta_{i+1} \circ e^i = F(e^i) \circ \theta_i$ for all $i\in \N$. Hence $C({\mathcal U}_{\mathbf{g}},{\mathcal M})$ is actually a complex in the category ${\mathcal F}$ of $F$-modules and $F$-homomorphisms. By \cite[1.1]{Lyube97}, this category is Abelian, and, for an $h \in \nn$, both ${\mathcal K} := \Ker e_h$ and ${\mathcal L} := \Ima e_{h-1}$ are $F$-submodules of $\mathcal{M}_{{\mathbf g}_h}$. Since $\mathcal{M}_{{\mathbf g}_h}$ is an $F$-finite $F$-module (by \ref{add.5}), it then follows from
\cite[Proposition 2.5(b)]{Lyube97} that both ${\mathcal K}$ and ${\mathcal L}$ are $F$-finite $F$-modules. Finally, it follows from the discussion of the full Abelian subcategory of ${\mathcal F}$ formed by the $F$-finite $F$-modules in \cite[Theorem 2.8]{Lyube97} (and its proof) that ${\mathcal K}/{\mathcal L} = H^h(C({\mathcal U}_{\mathbf{g}},{\mathcal M}))$ is an $F$-finite $F$-module.  The isomorphism
$H^h(C({\mathcal U}_{\mathbf{g}},{\mathcal M})) \cong H^h_{\fa}({\mathcal M})$ enables us to conclude that $H^h_{\fa}({\mathcal M})$ is an $F$-finite $F$-module.
\end{proof}

\begin{cor}
\label{alg.9} {\rm (Lyubeznik \cite{Lyube97}.)} Let ${\mathcal M}$ be an $F$-finite $F$-module over $R$, let $\fa_1, \ldots \fa_d$ be ideals of $R$ and let $i_1, \ldots, i_d\in \nn$. Then the local cohomology module
$$H^{i_d}_{\fa_d}(H^{i_{d-1}}_{\fa_{d-1}}( \cdots (H^{i_1}_{\fa_1}({\mathcal M}))\cdots))$$
is an $F$-finite $F$-module.
\end{cor}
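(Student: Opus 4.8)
The plan is a short induction on $d$, with Theorem~\ref{alg.4} doing essentially all the work. The one small wrinkle to dispose of first is that Theorem~\ref{alg.4} was stated for the proper ideal $\fa$ of Notation~\ref{GLF.1}, whereas in Corollary~\ref{alg.9} the $\fa_j$ are merely ideals of $R$. If some $\fa_j$ happens to equal $R$, then $H^i_{\fa_j}(-)$ is the zero functor, and the zero module is trivially an $F$-finite $F$-module (its structure morphism is $0 \stackrel{\cong}{\lra} F(0) = 0$, and it has the zero root), so that degenerate case causes no difficulty; in all other cases $\fa_j$ is proper and Theorem~\ref{alg.4} applies verbatim.

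For the base case $d = 1$: if $\fa_1$ is proper then $H^{i_1}_{\fa_1}({\mathcal M})$ is an $F$-finite $F$-module directly by Theorem~\ref{alg.4}, while if $\fa_1 = R$ then $H^{i_1}_{\fa_1}({\mathcal M}) = 0$, which is $F$-finite. For the inductive step, suppose $d \geq 2$ and that the statement holds for all iterated local cohomology modules of length $d-1$. Then
$$
{\mathcal M}' := H^{i_{d-1}}_{\fa_{d-1}}( \cdots (H^{i_1}_{\fa_1}({\mathcal M}))\cdots)
$$
is an $F$-finite $F$-module over $R$ by the inductive hypothesis, and applying the case $d = 1$ to the $F$-finite $F$-module ${\mathcal M}'$ and the ideal $\fa_d$ shows that $H^{i_d}_{\fa_d}({\mathcal M}')$ is an $F$-finite $F$-module, which is the assertion for length $d$.

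I do not expect any real obstacle here: the content lies entirely in Theorem~\ref{alg.4}, whose proof via Strategy~\ref{alg.10} is where the work was done, and the corollary is then an immediate formal consequence of closure of the class of $F$-finite $F$-modules over $R$ under the functors $H^i_{\fa}(-)$. (Taking ${\mathcal M} = R$ recovers Lyubeznik's original finiteness statement for iterated local cohomology modules of $R$ from \cite{Lyube97}.) The only things requiring a moment's thought are cosmetic — the handling of the case $\fa_j = R$ noted above, and the remark that each ${\mathcal M}'$ arising along the induction is genuinely an $F$-finite $F$-module over the \emph{same} regular ring $R$, so that Theorem~\ref{alg.4} can legitimately be reapplied at each stage.
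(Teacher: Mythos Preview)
Your proposal is correct and matches the paper's treatment: the paper states Corollary~\ref{alg.9} with no proof at all, evidently regarding it as immediate from Theorem~\ref{alg.4} by the obvious induction you spell out. Your handling of the degenerate case $\fa_j = R$ is a nice extra bit of care that the paper does not bother with.
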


\section{Lyubeznik numbers}
\label{Lnos}

We plan to show, in this section, that the ideas we have presented so far in this paper have application to the calculation of Lyubeznik numbers. We recalled the definition of Lyubeznik numbers at the end of the Introduction. This section provides our algorithm for the calculation of the Lyubeznik numbers of certain localizations of affine algebras over fields of prime characteristic.

\begin{ntn}
\label{Lnos.99} Throughout this section, let ${\mathbb K}$ denote a field of prime characteristic $p$, and let $R$ denote the polynomial ring ${\mathbb K}[X_1, \ldots, X_n]$ over ${\mathbb K}$ in the indeterminates $X_1, \ldots, X_n$. We are interested in the affine ${\mathbb K}$-algebra $R/\fc$, where $\fc$ is a proper ideal of $R$, and in finding the Lyubeznik numbers of the local ring $A := (R/\fc)_{\fm/\fc}$, where $\fm$ is a maximal ideal of $R$ containing $\fc$. Note that $A\cong R_{\fm}/\fc R_{\fm}$.

We shall often be interested in the case where $\fm$ is the ideal of $R$ generated by all the indeterminates $X_1, \ldots, X_n$. Recall that, then, the completion of $R_{\fm}$ is ${\mathbb K}[[X_1, \ldots, X_n]]$. It is an easy consequence of the Flat Base Change Theorem for local cohomology  that the Lyubeznik table of $A$ is equal to the Lyubeznik table of the completion $\widehat{A} \cong {\mathbb K}[[X_1, \ldots, X_n]]/\fc {\mathbb K}[[X_1, \ldots, X_n]]$. Throughout, $i$ and $j$ will denote arbitrary non-negative integers.
\end{ntn}

We have $\lambda_{i,j}(A) = \mu^i(\fm R_{\fm}, H^{n-j}_{\fc R_{\fm}}(R_{\fm}))$. A route to calculation of these Bass numbers is provided by Huneke--Sharp \cite[Corollary 3.7]{58}, from which it follows that $H^i_{\fm R_{\fm}}(H^{n-j}_{\fc R_{\fm}}(R_{\fm}))$ is an injective $R_{\fm}$-module. The following lemma will be helpful.

\begin{lem}\label{Lnos.97} Let ${\mathcal N}$ be an $\fm$-torsion $R$-module whose localization ${\mathcal N}_{\fm}$ at the maximal ideal $\fm$ is an injective $R_{\fm}$-module. Then ${\mathcal N}$ is an injective $R$-module isomorphic to $E(R/\fm)^{\mu^0(\fm,{\mathcal N})}$.
\end{lem}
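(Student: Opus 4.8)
The plan is to transfer injectivity from the local ring $R_{\fm}$ back to $R$, using the fact that an $\fm$-torsion $R$-module may be identified with its localization at $\fm$. First I would record the elementary observation that, since ${\mathcal N}$ is $\fm$-torsion, multiplication by any $s \in R \setminus \fm$ is an automorphism of ${\mathcal N}$: if $x \in {\mathcal N}$ is annihilated by $\fm^{k}$, then the image of $s$ in the Artinian local ring $R/\fm^{k}$ is a unit, with inverse the image of some $t \in R$, and then $s(tx) = x = t(sx)$, so multiplication by $s$ is bijective on the cyclic submodule $Rx$, hence on all of ${\mathcal N}$. It follows that the natural homomorphism ${\mathcal N} \to {\mathcal N}_{\fm}$ is an isomorphism of $R$-modules.

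Next I would apply Matlis' structure theory over the Noetherian local ring $R_{\fm}$. The $R_{\fm}$-module ${\mathcal N}_{\fm}$ is injective and $\fm R_{\fm}$-torsion (the latter because ${\mathcal N}$ is $\fm$-torsion), hence is a direct sum of copies of the injective hull $E_{R_{\fm}}(R_{\fm}/\fm R_{\fm})$, the number of copies being the Bass number $\mu^{0}(\fm R_{\fm},{\mathcal N}_{\fm})$. Regarded as an $R$-module, $E_{R_{\fm}}(R_{\fm}/\fm R_{\fm})$ is $\fm$-torsion and is an essential extension of $R/\fm$, so it is isomorphic to $E(R/\fm)$, the injective hull of $R/\fm$ over $R$; moreover, since $R$ is Noetherian, an arbitrary direct sum of copies of the injective $R$-module $E(R/\fm)$ is again injective over $R$. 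Combining this with the first step yields that ${\mathcal N} \cong E(R/\fm)^{\mu}$ with $\mu = \mu^{0}(\fm R_{\fm},{\mathcal N}_{\fm})$; in particular ${\mathcal N}$ is an injective $R$-module.

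Finally, since ${\mathcal N} \cong {\mathcal N}_{\fm}$ and $R/\fm \cong R_{\fm}/\fm R_{\fm}$, we have $\Hom_{R}(R/\fm,{\mathcal N}) \cong \Hom_{R_{\fm}}(R_{\fm}/\fm R_{\fm},{\mathcal N}_{\fm})$, and as both ${\mathcal N}$ and ${\mathcal N}_{\fm}$ are injective the common dimension of these vector spaces is $\mu^{0}(\fm,{\mathcal N}) = \mu = \mu^{0}(\fm R_{\fm},{\mathcal N}_{\fm})$, which completes the identification. I do not anticipate a genuine obstacle: the only points that call for a little care are the interchange of the $\fm$-torsion condition with localization at $\fm$ and the identification of the two injective hulls, both of which are standard. (Alternatively, one could avoid the structure theorem by checking $\Ext^{1}_{R}(R/\fp,{\mathcal N}) = 0$ for all $\fp \in \Spec(R)$: this $\Ext$-module is $\fm$-torsion, and its localization at $\fm$ is $\Ext^{1}_{R_{\fm}}((R/\fp)_{\fm},{\mathcal N}_{\fm}) = 0$ since ${\mathcal N}_{\fm}$ is $R_{\fm}$-injective.)
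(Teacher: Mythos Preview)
Your proof is correct and takes a genuinely different route from the paper's. The paper argues via the minimal injective resolution of ${\mathcal N}$ over $R$: it first checks that every associated prime of ${\mathcal N}$ is $\fm$, hence $E^{0}({\mathcal N})$ is a direct sum of copies of $E(R/\fm)$; an easy induction then shows every $E^{i}({\mathcal N})$ has this form, i.e.\ $E^{i}({\mathcal N})\cong E(R/\fm)^{\mu^{i}(\fm,{\mathcal N})}$; finally the identity $\mu^{i}(\fm,{\mathcal N})=\mu^{i}(\fm R_{\fm},{\mathcal N}_{\fm})$ together with the injectivity of ${\mathcal N}_{\fm}$ forces $E^{i}({\mathcal N})=0$ for $i>0$. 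Your argument instead front-loads the key structural fact that an $\fm$-torsion module is canonically isomorphic to its localization at $\fm$, then invokes Matlis' decomposition over the local ring $R_{\fm}$ and transports it back through the identification $E_{R_{\fm}}(R_{\fm}/\fm R_{\fm})\cong E(R/\fm)$. Your route is more direct and avoids touching the full injective resolution; the paper's route, while a step longer, is slightly more self-contained in that it does not appeal to the identification of the two injective hulls. Either way the substance is the same standard circle of ideas, and your alternative $\Ext^{1}$ check at the end is also a perfectly good variant.
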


\begin{proof}[Proof.] Suppose there exists $\fp \in \Spec(R)$  with $\fp \neq \fm$ such that $\mu^0(\fp, {\mathcal N}) > 0$. Because $\fm$ is maximal, there exists $r \in \fm \setminus \fp$. Also, there exists $0 \neq x \in {\mathcal N}$ such that $(0:x) = \fp$. As ${\mathcal N}$ is $\fm$-torsion, there exists $t \in \N$ such that $r^tx = 0$. Therefore $r^t \in \fp$, and we have a contradiction. It follows that $E^0({\mathcal N})$, the injective envelope of ${\mathcal N}$, is a direct sum of copies of $E(R/\fm)$, and so is $\fm$-torsion. So also is $E^0({\mathcal N})/{\mathcal N}$. An easy inductive argument now shows that $E^i({\mathcal N})$ is a direct sum of copies of $E(R/\fm)$, that is, $E^i({\mathcal N}) \cong E(R/\fm)^{\mu^i(\fm,{\mathcal N})} $, for all $i \in \nn$.

But $\mu^i(\fm,{\mathcal N}) = \mu^i(\fm R_{\fm},{\mathcal N}_{\fm})$ and this is $0$ when $i > 0$ because ${\mathcal N}_{\fm}$ is an injective $R_{\fm}$-module. Therefore $E^i({\mathcal N}) = 0$ for all $i \in \N$, so that ${\mathcal N}$ is an injective $R$-module isomorphic to $E^0({\mathcal N})$.
\end{proof}

\begin{rmk}
\label{Lnos.98} We return to the calculation of $\lambda_{i,j}(R_{\fm}/\fc R_{\fm}) = \mu^i(\fm R_{\fm}, H^{n-j}_{\fc R_{\fm}}(R_{\fm}))$. It follows from Huneke--Sharp \cite[Corollary 3.7]{58} that $H^i_{\fm R_{\fm}}(H^{n-j}_{\fc R_{\fm}}(R_{\fm}))$ is an injective $R_{\fm}$-module isomorphic to the direct sum of $\mu^i(\fm R_{\fm}, H^{n-j}_{\fc R_{\fm}}(R_{\fm}))$ copies of $E_{R_{\fm}}(R_{\fm}/\fm R_{\fm})$ (and $\mu^i(\fm R_{\fm}, H^{n-j}_{\fc R_{\fm}}(R_{\fm})) = \mu^i(\fm, H^{n-j}_{\fc}(R))$ is finite). Now $H^i_{\fm R_{\fm}}(H^{n-j}_{\fc R_{\fm}}(R_{\fm})) \cong (H^i_{\fm}(H^{n-j}_{\fc}(R)))_{\fm}$; it follows from \ref{Lnos.97} applied to the $\fm$-torsion $R$-module $H^i_{\fm}(H^{n-j}_{\fc}(R))$ that $H^i_{\fm}(H^{n-j}_{\fc}(R))$ is injective and its $0$th Bass number with respect to $\fm$ is $\mu^i(\fm, H^{n-j}_{\fc}(R))$.

   Consequently, $\lambda_{i,j}(A)$ is equal to the dimension, as a vector space over $R/\fm$, of the annihilator
$(0:_{H^i_{\fm}(H^{n-j}_{\fc}(R))}\fm)$.
\end{rmk}

In Discussion \ref{Lnos.2} below, we describe a sequence of steps which provide the basis for our algorithm. For full details of the Macaulay2 coding, the reader is referred to \[ \mbox{http://www.katzman.staff.shef.ac.uk/LyubeznikNumbers/} \] We mention now that, by the image and cokernel of an $n \times t$ matrix $Q$ over $R$, we mean the image and cokernel of the $R$-homomorphism $\nu_Q : R^t \lra R^n$ given by left multiplication by $Q$, that is $\nu_Q((r_1, \ldots,r_t)) = (Q(r_1, \ldots,r_t)^T)^T$ for all $(r_1, \ldots,r_t) \in R^t$, where $^T$ denotes matrix transpose.

\begin{disc}
\label{Lnos.2} Let the situation and notation be as in \ref{Lnos.99}, and use ${\mathcal M}$ to denote the $F$-finite $F$-module $H^{n-j}_{\fc}(R)$ (see \ref{alg.4}). This discussion is a recipe for finding the Lyubeznik number $\lambda_{i,j}(R_{\fm}/\fc R_{\fm})$. We have seen in \ref{Lnos.98} that this is equal to the $R/\fm$-vector-space dimension of $\left( 0 :_{H^i_{\fm}({\mathcal M})} \fm\right).$
\begin{enumerate}
\item First find a generating morphism $\alpha : L \lra F(L)$ for ${\mathcal M} = H^{n-j}_{\fc}(R)$. One (but not the only) way of doing this is to follow the recipe of Lyubeznik \cite[Proposition 1.11(a)]{Lyube97}: the $R$-module homomorphism
    \[
    \alpha : L := \Ext^{n-j}_R(R/\fc, R) \lra F(\Ext^{n-j}_R(R/\fc, R)) = \Ext^{n-j}_R(F(R/\fc), F(R))
    \]
    induced by the maps $\psi : F(R/\fc) \lra R/\fc$ and $\phi : R \lra F(R)$, for which $\psi (r'\otimes (r + \fc)) = r^pr' + \fc$ and $\phi(r) = r \otimes 1$ for all $r\in R$ and $r' \in R'$, is a generating morphism for $H^{n-j}_{\fc}(R)$.
\item The next step is to find a root morphism $\theta : N \lra F(N)$ for ${\mathcal M}$, given the generating morphism $\alpha : L \lra F(L)$ of part (i) for it. An effective method for doing this is provided by Lyubeznik \cite[Proposition 2.3]{Lyube97}: for each $i \in \N$, let $\alpha_i : L \lra F^i(L)$ be the composition
    \[
\begin{picture}(300,15)(-150,35)
\put(-120,40){\makebox(0,0){$                   L
$}}
\put(-60,40){\makebox(0,0){$                    F(L)
$}}
\put(-7,40){\makebox(0,0){$                      \cdots
$}}
\put(52,40){\makebox(0,0){$                     F^{i-1}(L)
$}}
\put(120,40){\makebox(0,0){$                    F^{i}(L)
$}}
\put(-110,40){\vector(1,0){36}}
\put(-90,44){\makebox(0,0){$^{             \alpha
}$}}
\put(-47,40){\vector(1,0){29}}
\put(-34,44){\makebox(0,0){$^{             F(\alpha)
}$}}
\put(0,40){\vector(1,0){30}}
\put(71,40){\vector(1,0){32}}
\put(87,44){\makebox(0,0){$^{             F^{i-1}(\alpha)
}$}}
\end{picture}
\]
(interpret $F^0(\alpha)$ as $\alpha$); let $k$ be the smallest $i$ for which $\Ker \alpha_i = \Ker \alpha_{i+1}$ (and such exists because $L$ is finitely generated); Lyubeznik \cite[Proposition 2.3]{Lyube97} shows that $\Ima \alpha_k =: N$ is a root for $H^{n-j}_{\fc}(R)$, and that the restriction of $F^k(\alpha)$ to $N$  yields a root morphism for $H^{n-j}_{\fc}(R)$.
\item Now use Strategy \ref{alg.10} (and the notation thereof) on ${\mathcal M} = H^{n-j}_{\fc}(R)$, but with the choice $\fa = \fm$. We find an infinite $\fm$-filter-regular sequence $\mathbf{g} := (g_i)_{i\in\N}$, composed of elements of $\fm$, on $N$; by \ref{alg.3}, that is automatically an $\fm$-filter-regular sequence on ${\mathcal M}$; we also get the chain of triangular sets ${\mathcal U}_{\mathbf{g}} := (U_{\mathbf{g}_i})_{i\in\N}$ on $R$, the complexes of generalized fractions  $C({\mathcal U}_{\mathbf{g}},{\mathcal M})$ and $C({\mathcal U}_{\mathbf{g}},R)$, and the isomorphisms
    $H^i(C({\mathcal U}_{\mathbf{g}},{\mathcal M})) \cong H^i_{\fm}({\mathcal M})$ for $i \in\nn$. Although theoretically this step deals with an infinite $\fm$-filter-regular sequence $\mathbf{g} := (g_i)_{i\in\N}$ on $N$ and ${\mathcal M}$, in practice one does not need details of the terms of the sequence beyond a certain stage: if one is interested in, say, $H^k(C({\mathcal U}_{\mathbf{g}},{\mathcal M}))$, then one only needs details of $g_1, \ldots, g_{k+1}$.

\item Let us take stock. Recall that we wish to calculate the Lyubeznik number $\lambda_{i,j}(R_{\fm}/\fc R_{\fm})$. We have seen in \ref{Lnos.98} that this is equal to the $R/\fm$-vector-space dimension of $\left( 0 :_{H^i_{\fm}({\mathcal M})} \fm\right).$ Since $H^i_{\fm}({\mathcal M})$ is $\fm$-torsion and $\fm$ is maximal, this dimension is the $0$th Bass number $\mu^0(\fm, H^i_{\fm}({\mathcal M})).$ We plan to use the isomorphism $H^i(C({\mathcal U}_{\mathbf{g}},{\mathcal M})) \cong H^i_{\fm}({\mathcal M})$ and a root $J$ for $H^i(C({\mathcal U}_{\mathbf{g}},{\mathcal M}))$ to complete the calculation, for it will then follow from \ref{alg.5}(ii) that
    \begin{align*}
    \lambda_{i,j}(R_{\fm}/\fc R_{\fm}) &= \dim_{R/\fm}\left( 0 :_{H^i_{\fm}({\mathcal M})} \fm\right)= \mu^0(\fm, H^i_{\fm}({\mathcal M}))= \mu^0(\fm, H^i(C({\mathcal U}_{\mathbf{g}},{\mathcal M})))\\& = \mu^0(\fm, J)= \dim_{R/\fm} (0:_J\fm). \end{align*}
    We therefore direct our attention to finding a root $J$ for $H^i(C({\mathcal U}_{\mathbf{g}},{\mathcal M}))$ and the $R/\fm$-vector-space dimension of the annihilator $(0:_J\fm)$.

\item    Each $R_{\mathbf{g}_h} = U_{\mathbf{g}_h}^{-h}R$ is the union of its cyclic submodules $\big(R\big(1/\mathbf{g}_h^{p^e}\big)\big)_{e\in \nn}$.
Now ${\mathcal M}$ is the direct limit of the direct system associated to the sequence
\[
\begin{picture}(300,15)(-150,35)
\put(-120,40){\makebox(0,0){$                   N
$}}
\put(-60,40){\makebox(0,0){$                    F(N)
$}}
\put(-7,40){\makebox(0,0){$                      \cdots
$}}
\put(52,40){\makebox(0,0){$                     F^i(N)
$}}
\put(120,40){\makebox(0,0){$                    F^{i+1}(N)
$}}
\put(-110,40){\vector(1,0){36}}
\put(-90,44){\makebox(0,0){$^{             \theta
}$}}
\put(-47,40){\vector(1,0){29}}
\put(-34,44){\makebox(0,0){$^{             F(\theta)
}$}}
\put(0,40){\vector(1,0){36}}
\put(20,44){\makebox(0,0){$^{             F^{i-1}(\theta)
}$}}
\put(67,40){\vector(1,0){32}}
\put(83,44){\makebox(0,0){$^{             F^i(\theta)
}$}}
\put(142,40){\vector(1,0){28}}
\put(180,40){\makebox(0,0){$                      \cdots
$.}}
\end{picture}
\]
It therefore follows from\/ {\rm \ref{add.3}} that $R_{(g_1,\ldots,g_h)} \otimes_R{\mathcal M}$ is the direct limit of the direct system associated to the diagonal sequence
\[
{\textstyle 0 \lra R\left(\frac{1}{\mathbf{g}_h}\right) \otimes_RN\lra R\left(\frac{1}{\mathbf{g}^p_h}\right)\otimes_R F(N)\lra \cdots \lra R\Big(\frac{1}{\mathbf{g}_h^{p^e}}\Big)\otimes_R F^e(N) \lra\cdots}.
\]
These sequences, for various $h$s, fit as the columns in the commutative diagram
\begin{equation*}
\xymatrix{
\dots \ar[r]^<{\quad \quad \quad \quad d^{i-2}\lceil\otimes \Id}  & R\big(\frac{1}{\mathbf{g}_{i-1}}\big)\otimes N \ar[r]^{d^{i-1}\lceil\otimes \Id} \ar[d]^{\iota \otimes \theta}   & R\big(\frac{1}{\mathbf{g}_{i}}\big)\otimes N \ar[r]^{d^{i}\lceil\otimes \Id}   \ar[d]^{\iota \otimes \theta} & R\big(\frac{1}{\mathbf{g}_{i+1}}\big)\otimes N \ar[r]^>{d^{i+1}\lceil\otimes \Id \quad \quad \quad \quad } \ar[d]^{\iota \otimes \theta} & \dots\\
\dots \ar[r]^<{\quad \quad d^{i-2}\lceil\otimes \Id}  & R\big(\frac{1}{\mathbf{g}_{i-1}^p}\big)\otimes F(N) \ \ \  \ar[r]^{d^{i-1}\lceil\otimes \Id} \ar[d]^{\iota \otimes F(\theta)}   & R\big(\frac{1}{\mathbf{g}_{i}^p}\big)\otimes F(N) \ar[r]^{d^{i}\lceil\otimes \Id}  \ar[d]^{\iota \otimes F(\theta)}  & R\big(\frac{1}{\mathbf{g}_{i+1}^p}\big)\otimes F(N) \ar[r]^>{d^{i+1}\lceil\otimes \Id \quad \quad \quad \quad }  \ar[d]^{\iota \otimes F(\theta)} & \dots\\
& \vdots & \vdots & \vdots & \quad \quad \quad ,\\
}
\end{equation*}
in which `$\iota$' is  used to indicate an appropriate inclusion map, `$\Id$' is used to denote an appropriate identity map, the symbol `$\lceil$' is used to denote restriction (of maps) and all the tensor products are over $R$.
The rows of this commutative diagram form a direct system (over $\N$) of complexes of $R$-modules and $R$-homomorphisms, and of chain maps of such complexes. By \ref{add.3}, the direct limit of these complexes is isomorphic to the complex
$C({\mathcal U}_{\mathbf{g}},R)\otimes_R{\mathcal M}$. By \ref{calc.6}, this is isomorphic to $C({\mathcal U}_{\mathbf{g}},{\mathcal M})$. We are aiming to find a root for $H^i(C({\mathcal U}_{\mathbf{g}},{\mathcal M}))$.

Now, for a direct system of complexes and chain maps of complexes, the operation of taking direct limits commutes with the operation of taking cohomology. It follows that a generating morphism for the ($F$-finite) $F$-module $H^i(C({\mathcal U}_{\mathbf{g}},{\mathcal M}))$ is the map
    \[
    \Ker (d^i\lceil \otimes \Id_N) / \Ima (d^{i-1}\lceil \otimes \Id_N) \lra \Ker (d^i\lceil \otimes \Id_{F(N)}) / \Ima (d^{i-1}\lceil \otimes \Id_{F(N)})
    \]
    induced by $\iota \otimes \theta$. (Do not forget that $F(R(1/\mathbf{g}_i)) \cong R(1/\mathbf{g}_i^p)$: see \ref{gf.107}(i).)

The annihilator of the generalized fraction $1/\mathbf{g}_i = 1/(g_1, \ldots, g_i)$ in $R_{\mathbf {g}_i}$ is $(g_1, \ldots, g_i)^{\lowlim}$, by \ref{gf.103}.  The map $d^i \lceil : R(1/\mathbf{g}_i) \lra R(1/\mathbf{g}_{i+1})$ is isomorphic to $$R/(g_1, \ldots, g_i)^{\lowlim} \stackrel{g_{i+1}}{\lra} R/(g_1, \ldots, g_i,g_{i+1})^{\lowlim}.$$ (The notation means that the homomorphism is induced by multiplication by $g_{i+1}$.)
Also, because $(g_1\ldots g_{i})^{p-1}(1/(g_1^p, \ldots, g_i^p)) = 1/(g_1, \ldots, g_i),$ the inclusion map $\iota : R(1/\mathbf{g}_i) \lra R(1/\mathbf{g}_i^p)$ is isomorphic to $$R/(g_1, \ldots, g_i)^{\lowlim} \stackrel{\pi_i}{\lra} R/(g_1^p, \ldots, g_i^p)^{\lowlim},$$ where $\pi_j = g_1^{p-1}\ldots g_j^{p-1}$ for all $j \in \N$. The concept of the {\em expansion\/} of a triangular subset of $R^n$ (\cite[p.\ 38]{32}) enables one to see quickly that the annihilator of $1/(g_1^p, \dots,g_i^p)$ is the same whether we consider this generalized fraction as a member of $R_{\mathbf{g}_i}$ or $R_{\mathbf{g}_i^p}$.

\item For $j \in\N$, set $\fc_j := (g_1, \ldots, g_j)^{\lowlim}$; calculate $\fc_i$ and $\fc_{i+1}$, with the aid of \ref{gf.105}. Express $N$ as $R^n/\Ima K = \Coker K$ for a suitable $n \times t$ matrix $K$ over $R$; we shall denote by $K^{[p]}$ the matrix obtained from $K$ by raising all its entries to the $p$th power. Then
    \begin{align*}
    R(1/\mathbf{g}_i) \otimes_RN & \cong (R/\fc_i) \otimes_R (R^n/\Ima K)\\ &\cong (R^n/\Ima K)/\fc_i (R^n/\Ima K) \cong R^n/(\fc_i R^n+ \Ima K).
    \end{align*}
The above comments enable us to see that the commutative diagram in (v) is isomorphic to
\begin{equation*}
\xymatrix{
\dots \ar[r]^<{\quad \quad \quad \quad g_{i-1}\otimes \Id}  & \frac{R}{\fc_{i-1}}\otimes N \ar[r]^{g_{i}\otimes \Id} \ar[d]^{\pi_{i-1} \otimes \theta}   & \frac{R}{\fc_{i}}\otimes N \ar[r]^{g_{i+1}\otimes \Id }   \ar[d]^{\pi_i \otimes \theta} & \frac{R}{\fc_{i+1}}\otimes N \ar[r]^>{g_{i+2}\otimes \Id \quad \quad \quad } \ar[d]^{\pi_{i+1} \otimes \theta} & \dots\\
\dots \ar[r]^<{\quad \quad g_{i-1}^p\otimes \Id}  & \frac{R}{\fc_{i-1}^{[p]}}\otimes F(N) \ \ \  \ar[r]^{g_i^p\otimes \Id} \ar[d]^{\pi_{i-1}^p \otimes F(\theta)}   & \frac{R}{\fc_{i}^{[p]}}\otimes F(N) \ar[r]^{g_{i+1}^p\otimes \Id}  \ar[d]^{\pi_{i}^p \otimes F(\theta)}  & \frac{R}{\fc_{i+1}^{[p]}}\otimes F(N) \ar[r]^>{g_{i+2}^p\otimes \Id\quad \quad }  \ar[d]^{\pi_{i+1}^p \otimes F(\theta)} & \dots\\
& \vdots & \vdots & \vdots & ,\\
}
\end{equation*}
where all the tensor products are over $R$.
\item Next, find an $n \times n$ matrix $U$ over $R$ such that the map $\theta : N \lra F(N)$ is isomorphic to $R^n/\Ima K \stackrel{U}{\lra} R^n/\Ima K^{[p]}$, the map being induced by multiplication on the left by $U$.
It then follows that the commutative diagrams in (v) and (vi) are isomorphic to
\begin{equation*}
\xymatrix{
\dots \ar[r]^<{\quad \quad \quad g_{i-1}}  & R^n/(\fc_{i-1} R^n+ \Ima K) \ar[r]^{g_{i}} \ar[d]^{\pi_{i-1}U}   & R^n/(\fc_{i} R^n+ \Ima K) \ar[r]^{g_{i+1}}   \ar[d]^{\pi_i U} & R^n/(\fc_{i+1} R^n+ \Ima K) \ar[r]^>{g_{i+2}\quad \quad \quad \quad } \ar[d]^{\pi_{i+1}U} & \dots\\
\dots \ar[r]^<{\quad \quad \quad  g_{i-1}^p}  & R^n/(\fc_{i-1}^{[p]} R^n+ \Ima K^{[p]})    \ar[r]^{g_i^p} \ar[d]^{\pi_{i-1}^pU^{[p]}}   & R^n/(\fc_{i}^{[p]} R^n+ \Ima K^{[p]}) \ar[r]^{g_{i+1}^p}  \ar[d]^{\pi_{i}^pU^{[p]}}  & R^n/(\fc_{i+1}^{[p]} R^n+ \Ima K^{[p]}) \ar[r]^>{g_{i+2}^p\quad \quad \quad \quad }  \ar[d]^{\pi_{i+1}^p U^{[p]}} & \dots\\
& \vdots & \vdots & \vdots & \quad \quad .\\
}
\end{equation*}
\item We can now conclude that a generating morphism for $H^i(C({\mathcal U}_{\mathbf{g}},{\mathcal M}))$ is
\[
\frac{(\fc_{i+1} R^n+ \Ima K) : _{R^n}g_{i+1}}{\fc_{i} R^n+ g_iR^n + \Ima K} \stackrel{\pi_i U}{\lra} \frac{((\fc_{i+1} ^{[p]}R^n+ \Ima K^{[p]}) : _{R^n}g_{i+1}^p)}{(\fc_{i}^{[p]} R^n+ g_i^pR^n + \Ima K^{[p]})}.
\]
Unfortunately, this homomorphism, which we here abbreviate by $\gamma : G \lra F(G)$, need not be injective. We again use the ideas of the proof of Lyubeznik \cite[Proposition 2.3]{Lyube97}: for each $i \in \N$, let $\gamma_i : G \lra F^i(G)$ be the composition
    \[
\begin{picture}(300,15)(-150,35)
\put(-120,40){\makebox(0,0){$                   G
$}}
\put(-60,40){\makebox(0,0){$                    F(G)
$}}
\put(-7,40){\makebox(0,0){$                      \cdots
$}}
\put(52,40){\makebox(0,0){$                     F^{i-1}(G)
$}}
\put(120,40){\makebox(0,0){$                    F^{i}(G);
$}}
\put(-110,40){\vector(1,0){36}}
\put(-90,44){\makebox(0,0){$^{             \gamma
}$}}
\put(-47,40){\vector(1,0){29}}
\put(-34,44){\makebox(0,0){$^{             F(\gamma)
}$}}
\put(0,40){\vector(1,0){30}}
\put(71,40){\vector(1,0){32}}
\put(87,44){\makebox(0,0){$^{             F^{i-1}(\gamma)
}$}}
\end{picture}
\]
if $t$ is the smallest integer such that $\Ker \gamma_t = \Ker \gamma_{t+1}$ (and there will be such), then $\Ima \gamma_t =: J$ is a root for $H^i(C({\mathcal U}_{\mathbf{g}},{\mathcal M}))$.
\item We can now use part (iv) to conclude that the desired Lyubeznik number is equal to the $R/\fm$-vector-space dimension of the annihilator
$(0 : _J \fm)$.
\end{enumerate}
\end{disc}

\begin{ex}
\label{Lnos.2add} In this first example, the procedure described in \ref{Lnos.2} simplifies considerably.

Let $\mathbb{K} := \Z/2\Z$, let $R := \mathbb{K}[X_1,X_2,X_3,X_4,X_5]$, and set $$\fm : = (X_1,X_2,X_3,X_4,X_5)R, \quad \fn : = (X_1,X_2,X_3,X_4)R,$$ \[
\fc := (X_1,X_2,X_3,X_4)R \cap (X_2,X_3,X_5)R \cap (X_1 -X_5, X_2 - X_5, X_3 - X_4)R.
\]
We are going to use our algorithm to show that $\lambda_{0 1}(R_{\fm}/\fc R_{\fm}) = 1$, so that the Lyubeznik table for $R_{\fm}/\fc R_{\fm}$ is not trivial.

Set $L := \Ext^{5-1}_{R}(R/\fc, R)$ and let $$\alpha : L \lra F(L)= F(\Ext^{5-1}_{R}(R/\fc, R)) = \Ext^{5-1}_{R}(F(R/\fc), F(R))= \Ext^{5-1}_{R}(R/\fc^{[2]}, R)$$ be the generating morphism for $H^{5-1}_{\fc}(R) =: {\mathcal M}$ described in \ref{Lnos.2}(i). We compute this morphism to be isomorphic to the $R$-homomorphism $\beta : (R/\fn) \oplus (R/\fm) \lra (R/\fn^{[2]}) \oplus (R/\fm^{[2]})$ induced by multiplication by $U := X_1X_2X_3X_4X_5$. It is not hard to see that this $\beta$ is injective, and therefore a root of $H^{5-1}_{\fc}(R)$. It is clear that $\Ass( (R/\fn) \oplus (R/\fm)) = \{\fm,\fn\}$, and so $g_1 := X_5$ is an $\fm$-filter-regular sequence on $(R/\fn) \oplus (R/\fm)$ and therefore also on ${\mathcal M}$ (by \ref{alg.3}). We continue with the procedure outlined in \ref{Lnos.2}(vi) and calculate that $\fc_0 := 0^{\lowlim} = 0$,  $\fc_1 := (X_5R)^{\lowlim} = 0$,
\[ V := (\fc_1R^2 + \fn\oplus\fm):_{R^2}X_5 = \fn\oplus R \quad \mbox{and} \quad W := \fc_0 R^2 + \fn\oplus\fm = \fn\oplus\fm.
\]
The upshot is that the map $\xi : V/W \lra V^{[2]}/W^{[2]}$ induced by multiplication by $U$ is a root for $H^0_{\fm}(H^{5-4}_{\fc}(R))$. But $\xi$ is isomorphic to the map $R/\fm \lra R/\fm^{[2]}$ induced by multiplication by $U$. We are able to conclude that $\lambda_{0 1}(R_{\fm}/\fc R_{\fm}) = 1$.

Further calculations using our algorithm yielded the Lyubeznik table for $R_{\fm}/\fc R_{\fm}$ as
$
\left[
\begin{array}{cc}
0&1\\
&2\\
\end{array}
\right].
$
\end{ex}

\begin{ex}
\label{Lnos.3} Here we illustrate the operation of our algorithm
by computing in some detail some Lyubeznik numbers of a ring whose characteristic zero counterpart was studied by Alvarez Montaner in
\cite[\S 5]{Alvarez2000}.

Let $\mathbb{K}$ be a field and
$R=\mathbb{K}[X_1, X_2, X_3, X_4, X_5, X_6, X_7]$;
write $\mathfrak{m}=(X_1, X_2, X_3, X_4, X_5, X_6, X_7)R$,
and let
$\fc:=(X_1, X_2)R \cap (X_3, X_4)R \cap(X_5, X_6, X_7)R$.
 We now illustrate our algorithm by
using it to compute $\lambda_{3 4}(R_{\fm}/\fc R_{\fm})$ and $\lambda_{4 4}(R_{\fm}/\fc R_{\fm})$
in the case where $\mathbb{K} = \Z/2\Z$.

The first step is to compute a free resolution $\mathbf{F}_\bullet$
of $R/\fc$ and to lift the quotient map
$R/\fc^{[2]} \lra R/\fc$ to a map of
resolutions
$F^1_R(\mathbf{F}_\bullet)  \lra \mathbf{F}_\bullet$.
We next apply $\Hom_R( - , R)$ to this map and compute cohomology to obtain maps $\Ext^\bullet_R(R/\fc , R) \lra \Ext^\bullet_R(R/\fc^{[2]} , R)$. Specifically in this example we obtain a map $\Ext^4_R(R/\fc , R) \lra \Ext^4_R(R/\fc^{[2]} , R)$ given by
$\phi: \Coker K \xrightarrow{U} \Coker K^{[2]}$
where
$$K:=
\left[\begin{array}{llllllllll}
X_7 &X_6 &X_5 &X_2 &X_1 &0   &0   &0   &0   &0   \\
0   &0   &0   &0   &0   &X_7 &X_6 &X_5 &X_4 &X_3 \\
\end{array}
\right]
$$
and
$$
U:=
\left[\begin{array}{ll}
X_1X_2X_5X_6X_7& 0 \\
0               & X_3 X_4 X_5 X_6 X_7 \\
\end{array}
\right].
$$

One can verify that this map is injective and hence a root.

The next step is to find an $\mathfrak{m}$-filter-regular sequence on $\Coker K$. One such sequence begins
\begin{align*}
(& g_1, g_2, g_3, g_4, g_5, \ldots)\\&=\left( X_1+X_2+X_4+X_5+X_7,
X_2+X_3+X_4+X_5+X_6+X_7,
X_3+X_6+X_7,
X_2, X_1,\ldots
\right)
\end{align*}
For our calculations, we only need details of the first five terms in the $\mathfrak{m}$-filter-regular sequence.
Write $\pi_i=g_1 \dots g_i$ for all $i \in \N$.

We calculate that
$$\fc_3:=(g_1, g_2, g_3)^{\lowlim}=(X_2+X_3+X_4+X_5+X_6+X_7, X_1+X_3+X_6)R $$
and
$$\fc_4:=(g_1, g_2, g_3, g_4)^{\lowlim}=(X_1+X_7, X_2+X_4+X_5, X_3+X_6+X_7)R.$$

We now compute a generating
morphism for $H^3(C({\mathcal U}_{\mathbf{g}},H^{7-4}_{\fc} (R)))$ \big($ \cong H^3_{\mathfrak{m}} \left( H^{7-4}_{\fc} (R)\right)$\big) as follows. We calculate that
$$B:=(\fc_4 R^2+\Ima K):_{R^2} g_4= R \oplus ( X_1, X_2+X_4, X_3, X_5, X_6, X_7)R,$$
$$C:=\fc_3 R^2+g_3 R^2+\Ima K=
\mathfrak{m} \oplus (X_1,  X_2+X_4, X_3, X_5, X_6, X_7 )R$$ and
$(C^{[2]} :_{R^2} \pi_3 U) \cap B= C,$
and we conclude that $\lambda_{3 4}(R_{\fm}/\fc R_{\fm})=\dim_{R/\fm} (0 :_{  B/C}\fm)=1$.

To find a generating
morphism for $H^4_{\mathfrak{m}} \left( H^{7-4}_{\fc} (R)\right)$
we calculate that
$$\fc_4=(g_1, g_2, g_3, g_4)^{\lowlim}=( X_1+X_7, X_2+X_4+X_5, X_3+X_6+X_7 )R$$
(as above) and
$$\fc_5:=(g_1, g_2, g_3, g_4, g_5)^{\lowlim}=( X_1+X_7, X_2, X_3+X_6+X_7, X_4+X_5   )R.$$
We compute
$$V:=(\fc_5 R^2+\Ima K):_{R^2} g_5= R^2, \quad
W:=\fc_4 R^2+g_4 R^2+\Ima K=\mathfrak{m} \oplus \mathfrak{m},$$
$$(W^{[2]} :_{R^2} \pi_4 U) \cap V=R \oplus \mathfrak{m}\quad \mbox{and} \quad (W^{[4]} :_{R^2} \pi_4^3 U^{[2]} U)\cap V =R \oplus \mathfrak{m}.$$
Hence
$\lambda_{4 4}(R_{\fm}/\fc R_{\fm})=\dim_{R/\fm} R/\mathfrak{m} =1$.

We have used our algorithm to calculate the Lyubeznik table $\Lambda(R_{\fm}/\fc R_{\fm}):=(\lambda_{i j}(R_{\fm}/\fc R_{\fm}))$ of $R_{\fm}/\fc R_{\fm}$ as
$$
\Lambda(R_{\fm}/\fc R_{\fm})=\left[
\begin{array}{cccccc}
0&0&1&0&0&0\\
&0&0&0&0&0\\
&&0&2&0&0\\
&&&0&1&0\\
&&&&1& 0\\
&&&&&2\\
\end{array}
\right].
$$
Interestingly, this is exactly the same Lyubeznik table as that given by Alvarez Montaner in \cite[\S 5]{Alvarez2000} for
$R_{\fm}/\fc R_{\fm}$ when $\mathbb{K}$ is a field of characteristic $0$.
\end{ex}

\begin{exs}
\label{Lnos.5} We have tested our algorithm in some situations where Lyubeznik numbers are already known. Here are some examples. In them, $R$ denotes the polynomial ring with coefficients in $\Z/2\Z$ in a finite number $n$ of variables which will be denoted by upper case letters with numeric suffices, such as $X_1, X_2, A_1, A_2, \ldots$; $\fm$ denotes the maximal ideal of $R$ generated by all the variables and $\fc$ is an ideal of $R$ contained in $\fm$. We are interested in the Lyubeznik table for $R_{\fm}/\fc R_{\fm}$ in various cases. Notice that this is equal to the Lyubeznik table for $R'/\fc R'$, where $R'$ is the ring of formal power series with coefficients in $\Z/2\Z$ in the same variables used for the construction of $R$.
\begin{enumerate}
\item We have used our algorithm in the case where $n = 6$ and \begin{align*} \fc & =  (X_1X_2X_3)R + (X_1X_2X_4)R + (X_1X_3X_5)R + (X_2X_4X_5)R + (X_3X_4X_5)R\\ & + (X_2X_3X_6)R + (X_1X_4X_6)R + (X_3X_4X_6)R + (X_1X_5X_6)R + (X_2X_5X_6)R\end{align*}
to calculate the Lyubeznik table for $R_{\fm}/\fc R_{\fm}$ as
$
\left[
\begin{array}{cccc}
0&0&1&0\\
&0&0&0\\
&&0&1\\
&&&1
\end{array}
\right],
$
just as reported by Alvarez Montaner and Vahidi in \cite[Example 4.8]{AlvVah14}.
\item When $n = 4$ and $\fc = X_1X_2R + X_2X_3R + X_3X_4R + X_4X_1R $, our calculations using our algorithm give the Lyubeznik table for $R_{\fm}/\fc R_{\fm}$ as
$
\left[
\begin{array}{ccc}
0&1&0\\
&0&0\\
&&2\\
\end{array}
\right],
$
just as reported by Nadi, Rahmati and Eghbali in \cite[Example 4.6]{NadRahEgh2017}.
\item When $n = 5$ and $\fc = (X_1,X_2,X_3)R \cap (X_3,X_4,X_5)R \cap (X_1,X_2,X_3,X_4)R$, our calculations using our algorithm yielded the Lyubeznik table for $R_{\fm}/\fc R_{\fm}$ as
$
\left[
\begin{array}{ccc}
0&1&0\\
&0&0\\
&&2\\
\end{array}
\right],
$
just as reported by Alvarez Montaner and Vahidi in \cite[Example 5.3]{AlvVah14}.
\item When $n = 6$ and $\fc = (X_1,X_2)R \cap (X_3,X_4)R \cap (X_5,X_6)R$, our calculations using our algorithm show that the Lyubeznik table for $R_{\fm}/\fc R_{\fm}$ is
$
\left[
\begin{array}{ccccc}
0&0&1&0&0\\
&0&0&0&0\\
&&0&3&0\\
&&&0&0\\
&&&&3
\end{array}
\right],
$
just as reported by N\'u\~nez-Betancourt, Spiroff and Witt in \cite[Example 2.4 and 5.5]{NunSpiWit19}.

\item We have used our algorithm in the case where $n = 7$ and $$\fc = X_1X_2R + X_2X_3R + X_3X_4R + X_4X_5R + X_5X_6R + X_6X_7R + X_7X_1R$$ to calculate the Lyubeznik table for $R_{\fm}/\fc R_{\fm}$ as
$
\left[
\begin{array}{cccc}
0&0&1&0\\
&0&0&0\\
&&0&1\\
&&&1
\end{array}
\right],
$
just as reported by Nadi, Rahmati and Eghbali in \cite[Example 4.6]{NadRahEgh2017}.
\item In the case where $n=6$ and $\fc$ is generated by  $\{A_1A_2,  B_1B_2,  C_1C_2,  A_1B_1C_1,   A_2B_2C_2  \}$, our algorithm produced $\left[
\begin{array}{cccc}
0&0&1&0\\
&0&0&0\\
&&0&1\\
&&&1
\end{array}
\right]$ as the Lyubeznik table for $R_{\fm}/\fc R_{\fm}$. This is consistent with the results reported by Nematbakhsh in \cite[Example 4.2]{Nemat18}.
\end{enumerate}
\end{exs}

\begin{exs}
\label{Lnos.4} Other situations where we have tested our algorithm against known Lyubeznik numbers include those studied by
De Stefani, Grifo and N\'u\~nez-Betancourt in \cite[Example 4.11]{DeSGriNun17}, and
Singh and Walther in \cite[Example 2.3]{SinWal05}.
Again, our results using our algorithm are consistent with the previously published results.

However, we also also found evidence that the usefulness of our algorithm can depend on the computer(s) available for the calculations. In 2020, we tried the algorithm in the case where $n = 8$ and $\fc$
 is generated by
 \begin{align*}A_1A_2A_3A_4 &,  B_1B_2B_3B_4,  A_1A_3A_4B_1B_2B_3,    A_2A_3A_4B_1B_4,     A_1A_2A_4B_2B_4,\\ & A_1A_2A_3B_3B_4,    A_2A_4B_1B_2B_4,       A_2A_3B_1B_3B_4,     A_1A_2B_2B_3B_4.\end{align*}
 The complexities of the calculations, via our algorithm, of the Lyubeznik table for $R_{\fm}/\fc R_{\fm}$ turned out to be too severe for the  laptop available in the 2020 `lockdown'. However, we had success with this example in 2022 using a more powerful computer.
\end{exs}

\bibliographystyle{amsplain}

\end{document}